\documentclass[11pt]{article}
\usepackage[T1]{fontenc}
\usepackage{lmodern}
\usepackage[margin=1in]{geometry}
\usepackage{amsmath,amssymb,amsthm,mathtools,microtype,booktabs,array}
\usepackage[colorlinks=true,linkcolor=blue,citecolor=blue,urlcolor=blue]{hyperref}
\hypersetup{pdftitle={Resolvent characteristics and quadratic mixing of Kac walk},pdfauthor={Yunjiang Jiang},pdfsubject={Wasserstein and total-variation mixing of the coordinate-plane Kac walk}}
\allowdisplaybreaks[2]
\newtheorem{theorem}{Theorem}[section]
\newtheorem{lemma}[theorem]{Lemma}
\newtheorem{proposition}[theorem]{Proposition}
\newtheorem{corollary}[theorem]{Corollary}
\theoremstyle{definition}\newtheorem{definition}[theorem]{Definition}
\newtheorem{example}[theorem]{Example}
\theoremstyle{remark}\newtheorem{remark}[theorem]{Remark}
\DeclareMathOperator{\Tr}{Tr}

\DeclareMathOperator{\Ad}{Ad}
\DeclareMathOperator{\ad}{ad}
\DeclareMathOperator{\rank}{rank}
\DeclareMathOperator{\diver}{div}

\newcommand{\E}{\mathbb E}
\newcommand{\PP}{\mathbb P}
\newcommand{\SO}{\mathrm{SO}}
\newcommand{\g}{\mathfrak{so}(n)}
\newcommand{\R}{\mathcal R}
\newcommand{\op}{\mathrm{op}}
\newcommand{\HS}{\mathrm{HS}}
\newcommand{\TV}{\mathrm{TV}}
\newcommand{\ip}[2]{\langle#1,#2\rangle}
\newcommand{\norm}[1]{\left\lVert#1\right\rVert}
\newcommand{\one}{\mathbf1}
\title{Resolvent characteristics and quadratic mixing\\of Kac's walk on $\mathrm{SO}(n)$}
\author{Yunjiang Jiang}
\date{}
\begin{document}
\maketitle
\begin{abstract}
We study the coordinate-plane Kac walk on $\mathrm{SO}(n)$ with independent uniform rotation angles. For the unnormalized Hilbert--Schmidt Riemannian distance, we prove that the Wasserstein-$2$ Lipschitz coefficient of a block of $2\binom n2$ steps is at most $2n^{-1/200}$ for sufficiently large $n$. This gives a fixed-accuracy $O(n^2)$ Wasserstein mixing bound and inverse-polynomial accuracy in a constant number of blocks. We also obtain an $O(n^2)$ total-variation upper bound. The coupling averages each component of a regularized-inverse angle correction over its own angle, so its exact flow preserves the joint angle law. A deterministic decreasing regularization parameter cancels the main scalar and matrix-valued resolvent drifts. Its stability factor is polynomial in the inverse regularization, and the matrix estimates close on two partial traces. The same scalar estimate bounds the fraction of deficient covariance directions; independent cores and a cutoff integration-by-parts argument give the total-variation transfer. We provide the parameter estimates, conditioning arguments, and finite-time singular-mass treatment explicitly. The results imply total-variation pre-cutoff on the quadratic scale, but do not establish either the presence or the absence of cutoff.
\end{abstract}
\begingroup
\small
\tableofcontents
\endgroup
\clearpage

\section{Introduction and results}\label{sec:introduction}
Put $N=\binom n2$, $n\ge5$. The transition kernel $P_n$ left-multiplies by a uniformly selected coordinate-plane rotation $R_{ij}(\theta)$, where $\theta$ is uniform modulo $2\pi$. All updates are independent. Haar probability on $\SO(n)$ is denoted by $\pi_n$. Let $d$ be the bi-invariant Riemannian distance induced by the unnormalized Hilbert--Schmidt inner product on $\g$. Set $W_2=W_{2,d}$ and use $\norm{\mu-\nu}_{\TV}=\sup_E|\mu(E)-\nu(E)|$ for probabilities.

\begin{theorem}\label{thm:main}
For all sufficiently large $n$, the Wasserstein Lipschitz coefficient of a $2N$-step block satisfies
\begin{equation}\label{eq:main-kappa}
 \kappa_n:=\sup_{x\ne y}
 \frac{W_2(P_n^{2N}(x,\cdot),P_n^{2N}(y,\cdot))}{d(x,y)}
 \le 2n^{-1/200}\le n^{-1/300}.
\end{equation}
Consequently, for every fixed $A>0$, a constant number of such blocks gives
\[
 \sup_x W_2(P_n^{\lceil C_A N\rceil}(x,\cdot),\pi_n)\le n^{-A}.
\]
There is a sequence $T_n=O(N)$ for which
\[
 \sup_x\norm{P_n^{T_n}(x,\cdot)-\pi_n}_{\TV}\le Cn^{-5}.
\]
In particular, for fixed accuracy $\epsilon>0$ in Wasserstein distance, and $\epsilon\in(0,1)$ in total variation,
\[
 t_{\mathrm{mix},W_2}(\epsilon)=O_\epsilon(n^2),\qquad
 t_{\mathrm{mix},\TV}(\epsilon)=O_\epsilon(n^2).
\]
The Wasserstein upper bound also holds for Hilbert--Schmidt chord distance and for $W_1$.
\end{theorem}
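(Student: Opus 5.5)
The proof is organized around a coupling of two copies of the walk driven by the same sequence of plane choices $(i_k,j_k)$. The angles are coupled by a correction map. Write $y=e^{Z}x$ with $Z\in\g$ and $\norm{Z}_{\HS}=d(x,y)$ small; the distance is bi-invariant and geodesic, so it suffices to treat infinitesimally close pairs. After $2N$ steps the discrepancy is transported by $\Ad$ of the accumulated product $g_t=R_{i_tj_t}(\theta_t)\cdots R_{i_1j_1}(\theta_1)$. We perturb the second copy's angles by $\theta_k\mapsto\theta_k+\delta_k$, where $\delta=-(G^*G+\lambda)^{-1}G^*\Ad_{g_{2N}}Z$. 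Here $G$ is the linear map sending angle perturbations to the terminal Lie-algebra displacement: each generator $E_{i_kj_k}$ is conjugated by the tail of the product. This is a regularized inverse (resolvent) correction. To keep the perturbed angles exactly uniform, we replace each component $\delta_k$ by its average over $\theta_k$ with the other angles frozen, and integrate the resulting vector field as an exact flow. A field that does not depend on its own coordinate has zero divergence in that coordinate, so the flow preserves the product Haar law on the torus $[0,2\pi)^{2N}$. The coupled copies then have the right marginals, and
\[
W_2^2\bigl(P^{2N}(x,\cdot),P^{2N}(y,\cdot)\bigr)\le \E\,\norm{\lambda(GG^*+\lambda)^{-1}\Ad_{g}Z}_{\HS}^2+\text{(averaging and nonlinear errors)}.
\]

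The main term is controlled by the spectrum of the random Gram operator $GG^*$ on $\g$, which has dimension $N$ and is a sum of $2N$ rank-one projections onto conjugated generators. Since the number of steps is twice the dimension, its empirical spectrum should concentrate with no mass near $0$. We track the scalar resolvent $m_t(\lambda)=N^{-1}\Tr(G_tG_t^*+\lambda)^{-1}$ step by step. The step-$t$ update is a rank-one Sherman--Morrison increment whose conditional expectation, over a uniform plane and angle, is expressed through two partial traces of the resolvent. We choose a deterministic decreasing schedule $\lambda_t$ that cancels this drift, so $m_t$ stays near a deterministic characteristic curve. Doing the same for the matrix-valued resolvent gives a closed system in the two partial traces. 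Fluctuations are handled by martingale bounds whose stability factor is polynomial in $\lambda^{-1}$. Taking $\lambda$ equal to a small power of $n$ then gives $\E\,\norm{\lambda(GG^*+\lambda)^{-1}\Ad_g Z}^2\le n^{-1/100}\norm{Z}^2$. The averaging error from replacing $\delta_k$ by its mean over $\theta_k$ costs a lower-order correction, giving the coefficient $2n^{-1/200}$.

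I expect the main obstacle to be closing the matrix-valued resolvent estimate. That requires showing the drift cancellation holds uniformly over directions $Z$ and not just in trace, and bounding the loss of the averaging step, on rare configurations where $\norm{\delta}$ is large, by a finite-time singular-mass estimate.

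With the contraction in hand, the $W_2$ consequence is immediate. Iterating blocks and using $\operatorname{diam}\SO(n)=O(n)$ gives $W_2\le O(n)\,n^{-k/300}$, which is below $n^{-A}$ for $k=O(A)$. For $W_1$, note $W_1\le W_2$; for the chord distance, note $\norm{x-y}_{\HS}\le d(x,y)$.

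For total variation, we start from a $W_2$-close coupling and then run a further $O(N)$ steps from two nearby points. The scalar resolvent bound also bounds the fraction of deficient directions of the covariance of the angle-to-group map. Conditioning on independent cores, i.e.\ disjoint step blocks whose Gram matrices are well conditioned, makes the pushforward of the angle law have a smooth density with a quantitative gradient bound. A cutoff integration by parts then converts a small shift of size $n^{-A}$ into TV distance $O(n^{-5})$. The $O_\epsilon(n^2)$ mixing-time bounds follow by taking $T_n$ to be a constant multiple of $N$.
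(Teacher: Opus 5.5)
Your outline matches the paper's architecture: a regularized-inverse angle correction, own-angle averaging integrated as an exact divergence-free flow, a deterministic decreasing regularization path, local-to-global, and then TV through deficient directions and a cutoff integration by parts. The gap is that the step carrying almost all of the work is only asserted, and you assign the matrix-valued estimates to the wrong task.

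The averaging error is not lower order for free. Work with the left covariance $B=\sum_s w_sw_s^*$, which is pointwise conjugate to your $GG^*/2$. For $t>k$ one has $\partial_{\theta_k}w_t=-\sqrt2[w_k,w_t]$, so $\partial_{\theta_k}B=-[\ad_{\sqrt2w_k},B_{>k}]$ with $B_{>k}=\sum_{t>k}w_tw_t^*$. This is as large as the future covariance itself. After circle Poincar\'e, crude resolvent bounds therefore give a cost of order $\eta^{-4}$ up to logarithms, which swamps the main term $2\eta$. Nor is this a rare-configuration or singular-mass issue, since the regularization gives $|\delta_k|\le C\eta^{-1}\|Z\|$ pathwise.

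The paper's mechanism runs as follows:
\begin{enumerate}
\item Circle Poincar\'e reduces the cost to $N^{-1}\sum_s\E\|(\partial_{\theta_s}T_B)w_s\|^2$.
\item In the frame just before step $s$, $B=X+P_a+O_a(\theta_s)^*YO_a(\theta_s)$, with independent past right covariance $X$ and future left covariance $Y$.
\item A deterministic resolvent inequality bounds the derivative by $\|L_aYTa\|^2+\|L_aTa\|^2$, where $T=(X+Y+\eta I)^{-1}$.
\item After removing $R=\lceil n^{5/4}\rceil$ recent steps and Haarizing the old separators (polynomial approximation of the resolvent plus the Carlen--Carvalho--Loss gap), a Haar simple-bivector commutator estimate bounds the plane average by $n^{-3}\bigl(\|A-\tau(A)I\|_{\HS}^2+\|\R_0A\|_F^2\bigr)$ for $A=T$ and $A=YT$.
\end{enumerate}
The trivial bound $\|\R_0A\|_F^2\lesssim n^3$ makes the last step useless. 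The real role of the two matrix-valued characteristic estimates, for $\R_0T$ and for the colored, nonsymmetric $\R_0(YT)$, is to improve this to order $n^{5/2}$. That improvement produces the $n^{-1/2}$ influence bound and hence the $n^{-1/4}\eta^{-10}$ term. None of this appears in your sketch, and it is the heart of the proof.

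Conversely, the difficulty you anticipate---uniformity of the drift cancellation over directions $Z$---does not arise. The law of $B$ is invariant under conjugation by orientation-preserving signed permutations; equivalently, you may randomize $Z$ by an independent such permutation at no step cost. For $n\ge5$ this gives $\E\|\eta(B+\eta I)^{-1}Z\|^2=\|Z\|^2\E\tau\bigl(\eta^2(B+\eta I)^{-2}\bigr)\le2\eta\|Z\|^2$ from the scalar bound $\E\tau(B+\eta I)^{-1}\le2$ alone. The scalar drift itself involves $\tau(T^2)$ and the coordinate-diagonal variance $V(T)$, not partial traces. No claim that the spectrum has no mass near $0$ is needed, and the paper does not prove one.

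For TV, a single core is not well conditioned: the scalar estimate only bounds the expected fraction of eigenvalues below $\eta$ by $4\eta$. You need a fixed large number $b$ of cores and a conditional Golden--Thompson bound on the sum of the deficient projections transported by the later endpoints. That gives $\lambda_{\min}\ge b\eta/2$ outside probability $N^{-10}$. Also, the integration by parts controls a cutoff-weighted submeasure, not a smooth density of the full law, which keeps singular mass at every finite time.
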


For precision, the two mixing-time conventions in the theorem are
\[
\begin{aligned}
 t_{W_2}^{(n)}(\epsilon)
 &=\min\{t\ge0:\sup_xW_2(P_n^t(x,\cdot),\pi_n)\le\epsilon\},\\
 t_{\TV}^{(n)}(\epsilon)
 &=\min\{t\ge0:\sup_x\|P_n^t(x,\cdot)-\pi_n\|_{\TV}\le\epsilon\}.
\end{aligned}
\]
Time is discrete: one step changes exactly one uniformly selected coordinate plane. Every block used in an actual coupling contains exactly $2N$ such steps. Auxiliary rotations introduced in expectation estimates are not added to the walk.

\begin{corollary}[Quadratic scale does not decide cutoff]\label{cor:precutoff}
For every fixed $\epsilon\in(0,1)$ and all sufficiently large $n$,
\[
 N\le t_{\TV}^{(n)}(\epsilon)\le26600N.
\]
In particular, the family has total-variation pre-cutoff. The estimates do not imply that the ratio
$t_{\TV}^{(n)}(\epsilon)/t_{\TV}^{(n)}(1-\epsilon)$ tends to one, or that it fails to do so. Thus no conclusion of ``no cutoff'' follows. Definitions, the proof of this corollary, and two contrasting examples are given in Section~\ref{sec:cutoff}.
\end{corollary}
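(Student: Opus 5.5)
The statement has two halves: a lower bound $t_{\TV}^{(n)}(\epsilon)\ge N$, and an upper bound $t_{\TV}^{(n)}(\epsilon)\le 26600N$ for every fixed $\epsilon\in(0,1)$ and large $n$. The upper bound should come directly from the total-variation part of Theorem~\ref{thm:main}. That theorem gives a sequence $T_n=O(N)$ with $\sup_x\norm{P_n^{T_n}(x,\cdot)-\pi_n}_{\TV}\le Cn^{-5}$. Since total-variation distance to stationarity is nonincreasing in $t$, once $Cn^{-5}\le\epsilon$ we get $t_{\TV}^{(n)}(\epsilon)\le T_n$. So I need the explicit constant hidden in $T_n=O(N)$. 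The plan is to trace the construction: a fixed number of $2N$-step Wasserstein blocks (via \eqref{eq:main-kappa}, enough to bring $W_2$ down to $n^{-A}$ for a suitable $A$), followed by the $O(N)$ total-variation transfer run (independent cores plus the cutoff integration-by-parts smoothing). I then add the block counts and check the total is at most $26600N$. Getting this bookkeeping right is the main obstacle for the upper bound, because the constant is dictated by the specific choices of $A$ and of the exponent $1/300$. For example, reaching accuracy $n^{-A}$ from diameter $O(\sqrt n)$ takes about $300(A+1/2)$ blocks of length $2N$, so I will fix $A$ from the smoothing step and verify the arithmetic.

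For the lower bound I would use a coupon-collector / untouched-coordinate statistic. Each step touches exactly two coordinates. After $t<N$ steps at most $2t<n(n-1)$ coordinate incidences have occurred, but that is too weak by itself. Instead I use the fact that column $e_k$ of $X_tX_0^{-1}$ is unchanged unless index $k$ was selected. Each step selects $k$ with probability $2/n$, so after $t=N-1\approx n^2/2$ steps this is far too many steps for untouched columns to survive. So I will not use a coupon-collector argument at this scale. Instead I take a diagonal-entry statistic, $f(X)=\sum_k X_{kk}=\Tr X$, started from $X_0=I$. Under Haar measure, $\Tr X$ has mean $0$ and variance $1$. Under the walk, one step multiplies the expected trace contribution: $\E\Tr(R_{ij}(\theta)X)$ replaces rows $i,j$ by averages that kill them in expectation. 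Hence $\E\Tr X_{t+1}=(1-2/n)\E\Tr X_t$, which gives $\E\Tr X_t=n(1-2/n)^t\approx ne^{-2t/n}$. This decays after $O(n\log n)$ steps, too fast to give $N$.

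So the trace is insufficient, and I would switch to a quadratic statistic such as $g(X)=\sum_{k}X_{kk}^2$ or $\norm{X-I}_{\HS}^2$. Its mean evolves by a linear recursion on second moments, with decay rate $\Theta(1/N)$ per step because rows must be mixed pairwise. I compute $\E_I g(X_t)$ and $\mathrm{Var}_I g(X_t)$ exactly from the closed linear system on degree-two polynomials, which is invariant under $P_n$. I then compare with the Haar values $\E_\pi g=1$ and $\mathrm{Var}_\pi g=O(1)$, and apply Chebyshev / Wilson's method. At $t=N$ the mean of $g$ is still $n e^{-cN/N}$-ish, of order $n$ times a constant, which is far from $1$. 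The deviation is many standard deviations, so the total-variation distance is at least $1-\epsilon$, and that gives $t_{\TV}^{(n)}(\epsilon)\ge N$. The delicate point here is obtaining a uniform $O(n)$-or-smaller variance bound at time $N$, which needs the degree-four moment recursion. I expect this to go through with crude estimates, because the mean gap is of order $n$.

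Finally, the upper and lower bounds together give a bounded ratio, which is pre-cutoff. I would also note that nothing here compares the $\epsilon$ and $1-\epsilon$ mixing times, which justifies the stated non-conclusion.
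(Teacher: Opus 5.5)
Your upper bound follows the paper's route: a fixed number of $2N$-step Wasserstein cores (the paper takes $j=3300$, reaching $W_2\le n^{-10}$ by \eqref{eq:Witerate}), then $b=10000$ smoothing cores. That is $2N(j+b)=26600N$ steps, and monotonicity of $d_n$ turns $Cn^{-5}$ into $t_{\TV}^{(n)}(\epsilon)\le26600N$.

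The lower bound has a genuine gap. Your key claim, that $\E_I\sum_kX_{kk}^2$ relaxes at rate $\Theta(1/N)$ per step, is false. A rotation in the plane $\{i,j\}$ gives $\E_\theta(X'_{ik})^2=\E_\theta(X'_{jk})^2=\tfrac12(X_{ik}^2+X_{jk}^2)$. So in expectation the squared entries of each column evolve by random pair averaging, whose nontrivial eigenvalue is $1-\frac1{n-1}$. Hence, exactly,
\[
 \E_I\sum_kX_{kk}(t)^2=1+(n-1)\Bigl(1-\frac1{n-1}\Bigr)^t .
\]
At $t=N$ this is $1+O(ne^{-n/2})$, so the statistic has already reached its Haar mean. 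Your other choice, $\norm{X-I}_{\HS}^2=2n-2\Tr X$, is affine in the trace and relaxes on the $n\log n$ scale, as you computed yourself.

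No better low-degree statistic repairs this. A polynomial of fixed degree in the entries lies in a finite-dimensional translation-invariant space. Off the constants, the one-step operator has norm at most $1-\gamma_n\le1-1/(2n)$ on that space. Comparing sup and $L^2(\pi_n)$ norms there, its mean and variance from any start are within $n^{O(1)}e^{-t/(2n)}$ of their Haar values. That is $e^{-\Omega(n)}$ at $t\approx N$. Wilson- or Chebyshev-type arguments with such statistics therefore cannot reach the quadratic scale; they give at best order $n\log n$.

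The missing idea is the dimension count the paper uses, which gives the stronger exact statement $d_n(t)=1$ for all $t<N$. Fix one of the finitely many plane words of length $t$. The endpoint $R_t\cdots R_1x$ is a smooth image of the $t$-dimensional angle torus in the $N$-dimensional manifold $\SO(n)$. When $t<N$ this image has zero Riemannian volume, so the finite union over words is Haar-null. Thus $P_n^t(x,\cdot)$ is singular with respect to $\pi_n$, and $t_{\TV}^{(n)}(\epsilon)\ge N$ for every $\epsilon<1$.

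Your count of ``coordinate incidences'' was the wrong count. The relevant one is one angle parameter per step against $\dim\SO(n)=N$. With this replacement, the ratio bound $26600$ gives pre-cutoff, and your closing remark about the non-conclusion is correct.
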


The exponents in this note are deliberately conservative. The required conclusion is a block coefficient bounded by any fixed negative power of $n$. Neither the constants nor the sufficiently-large-$n$ threshold are optimized. A dimension argument gives $t_{\mathrm{mix},\TV}(\epsilon)\ge N$ for $\epsilon<1$; see Section~\ref{sec:tv}.

The classical inputs are the group spectral gap of Carlen--Carvalho--Loss~\cite{CCL} and the Golden--Thompson inequality~\cite{Golden,Thompson}. We state the exact normalization of each when used. Elementary Hilbert-space martingale estimates and the required local-to-global transportation argument are proved in the forms needed below; the latter originates in Oliveira~\cite{Oliveira}. The proof does not assume a previous improvement of the mixing time. It also does not assume a limiting random-matrix law, a minimum covariance eigenvalue for one core, or absolute continuity of the full finite-time law.

Section~\ref{sec:history} gives a historical account, distinguishing the physical sphere model, the coordinate-plane matrix walk, and related rotation processes. For context, Oliveira~\cite{Oliveira} established an $O(n^2\log n)$ Wasserstein bound and a quadratic lower bound. Pillai and Smith~\cite{PS18} proved a polynomial total-variation upper bound through a non-Markovian coupling. Their later preprint~\cite{PS26} obtains $O(n^2\log n)$ in total variation. Our proof uses an angle-space coupling as well, but enforces the noise law by an exactly divergence-free flow rather than an approximately law-preserving additive perturbation.

\subsection{Why a resolvent characteristic is useful}
An endpoint is a smooth function of its $m$ angles. If $J$ is its left-trivialized Jacobian and $JJ^*=2B$, a preliminary correction in the direction $h$ is
\[
 v=-\tfrac12J^*(B+\eta I)^{-1}h.
\]
It leaves the formal endpoint displacement $\eta(B+\eta I)^{-1}h$. This would be small if the average regularized inverse were controlled. However, $v$ generally changes the joint uniform angle distribution. The first issue is therefore to construct a correction with exactly the right marginal law, not merely to make its formal displacement small.

We replace the $s$th component by its own-angle average. The resulting vector field is divergence free, and hence its flow is exactly measure preserving. Circle Poincare bounds the resulting error by the sum of the individual squared angle derivatives of the resolvent. This sum is the central probabilistic quantity of the proof.

Differentiating the regularized inverse produces both $(S+zI)^{-1}$ and a colored matrix $Y(S+zI)^{-1}$. Their rank-one update equations would usually create higher inverse powers. We avoid a hierarchy by allowing $z$ to decrease on a deterministic path. This cancels the leading drift. The only remaining higher inverse powers are multiplied by a scalar trace error, which is already small. The scalar stability factor telescopes to $z_0/\eta$. Retaining this factor, rather than applying a crude exponential Gronwall estimate, is essential.

\subsection{A reading map and the quantitative ledger}
The historical section can be read independently of the proof. The notation and classical inputs are in Section~\ref{sec:elementary}. Section~\ref{sec:comparison} proves precisely which resolvent tests can be averaged over an auxiliary rotation. The scalar characteristic estimate is proved in Section~\ref{sec:characteristic}; the two matrix-valued estimates are in Section~\ref{sec:matrix}. Sections~\ref{sec:two-block} and~\ref{sec:influence} turn them into an individual-angle bound. The actual coupling, with no auxiliary time cost, is constructed in Section~\ref{sec:coupling}. Section~\ref{sec:tv} proves the separate total-variation transfer.

Here is the ledger of bounds, so that the powers can be checked before reading the proofs. Throughout,
\[
 \ell=\log(en),\qquad R=\lceil n^{5/4}\rceil,\qquad n^{-1/100}\le\eta\le1.
\]
The constants are uniform in this range and in all deterministic coloring schedules with at most $4N$ actual steps.
\begin{center}
\small
\begin{tabular}{p{0.60\textwidth}p{0.31\textwidth}}
\toprule
Quantity & Upper bound, up to an absolute constant\\
\midrule
Resolvent coordinate-diagonal variance & $n^{-3/4}\eta^{-2}$\\
Maximal scalar characteristic error in $L^2$ & $n^{-3/8}\eta^{-4}$\\
$L^2$ norm of the centered resolvent partial trace & $n^{5/4}\eta^{-6}$\\
$L^2$ norm of the centered colored-resolvent partial trace & $n^{5/4}\ell\eta^{-6}$\\
Independent-block squared commutator average & $n^{-1/2}\ell^4\eta^{-12}$\\
Sum of individual squared angle influences, divided by $N$ & $n^{-1/2}\ell^8\eta^{-16}$\\
Mean squared endpoint cost of restoring the angle law & $n^{-1/2}\ell^{12}\eta^{-20}\|h\|^2$\\
Mean squared formal endpoint residual & $2\eta\|h\|^2$\\
\bottomrule
\end{tabular}
\end{center}
At $\eta=n^{-1/100}$, every error still carries a fixed negative power of $n$. We deliberately leave large reserves in the exponents; numerical optimization is not used anywhere.

The final quantitative statement before choosing $\eta$ is
\begin{equation}\label{eq:intro-uniform-block}
 \kappa_n\le\sqrt{2\eta}+Cn^{-1/4}\ell^6\eta^{-10}.
\end{equation}
A fixed negative power of $n$ for $\kappa_n$ is all that is needed: the initial distance is only of order $\sqrt n$, so a constant number of blocks reaches any fixed inverse-polynomial accuracy.

\section{Historical background and related work}\label{sec:history}
The name ``Kac's walk'' is used for several related processes. Their connections explain both the physical motivation and the mathematical interest of the problem, but their state spaces, time normalizations, and notions of convergence must be kept separate. This section records the direct spectral and mixing-time line of work, together with the kinetic-theory and algorithmic developments most relevant to it. Preprint and withdrawal statuses below refer to the public record checked on 7 September 2026. A cited preprint is not thereby a published theorem, and a discussion question is not used as a mathematical input.

\subsection{The kinetic model and the matrix lift}
Kac introduced a stochastic collision model in \emph{Foundations of Kinetic Theory}~\cite{Kac56}, and developed its probabilistic setting further in~\cite{Kac59}. In its simplest version, the coordinates of a vector represent one-dimensional particle velocities. A collision replaces a selected pair by
\[
 (v_i,v_j)\longmapsto
 (v_i\cos\theta-v_j\sin\theta,\ v_i\sin\theta+v_j\cos\theta).
\]
This preserves $v_i^2+v_j^2$, and hence the total kinetic energy. The finite-particle state space is therefore an energy sphere. The model deliberately simplifies physical binary collisions: in this one-dimensional form it conserves energy but not total momentum. Its purpose is to isolate the probabilistic mechanism of binary interaction and the emergence of a nonlinear kinetic equation, not to reproduce every conservation law of a gas.

A central question in Kac's program is \emph{propagation of chaos}. Roughly, when a symmetric $n$-particle law initially has asymptotically independent fixed-size marginals, one asks whether this property persists and whether the one-particle marginal follows the appropriate nonlinear collision equation. This limit, with the number of observed particles held fixed while $n\to\infty$, is different from approximation of the entire $n$-particle law in total variation. McKean studied approach to equilibrium for the Kac model and probabilistic representations of the Maxwellian Boltzmann equation~\cite{McKean66,McKean67}. Gr\"unbaum developed propagation-of-chaos results for the Boltzmann equation~\cite{Grunbaum71}. The later account of Carlen--Carvalho--Loss~\cite{CCL11} explains the relation among the finite-particle master equation, the nonlinear kinetic equation, and quantitative relaxation.

Entropy introduces another important distinction. Carlen, Carvalho, Le Roux, Loss, and Villani~\cite{CCLRLV10} studied entropy and chaoticity in the Kac model. Einav~\cite{Einav11} and Carlen--Carvalho--Einav~\cite{CCE18} investigated entropy production and its dimension dependence. Mischler--Mouhot~\cite{MM13} developed a quantitative treatment of Kac's program for collision models, while Hauray--Mischler~\cite{HM14} studied several precise formulations of chaos and the relations between them. These works concern substantive questions about the passage from many-particle dynamics to kinetic equations. A worst-start matrix mixing bound does not replace their conclusions, and an entropy or chaos estimate cannot be converted into the matrix mixing statement here without an additional argument.

The present matrix walk is a lift of the sphere process. If $F_t$ is its random product, then $F_t x$ evolves as the sphere walk for every fixed unit vector $x$. Recording only $F_t e_1$ observes one column; recording $F_t$ observes a complete orthonormal frame and all the dependence between its columns. The sphere has dimension $n-1$, whereas $\SO(n)$ has dimension $N=\binom n2$. This geometric difference is already reflected in the distinct singularity obstructions from point starts. The use of plane rotations to sample random orthogonal matrices also appears in Hastings's discussion of Monte Carlo sampling~\cite{Hastings70}.

We count one pair collision as one discrete step. Kinetic-theory generators are often accelerated so that every particle experiences order-one collisions per unit time. Statements about a dimension-independent gap in such a convention are consistent with a gap of order $1/n$ for our discrete kernel. Metric normalization matters as well: Wasserstein accuracy in the unnormalized matrix distance used here is not the same requirement as accuracy in that distance divided by $\sqrt n$.

\subsection{From spectral gaps to mixing from singular starts}
Diaconis and Saloff-Coste~\cite{DSC00} obtained quantitative estimates for Kac's master equation and the associated rotation walks, using comparison methods. Janvresse~\cite{Janvresse01} established the correct spectral-gap order for the sphere model; her later work~\cite{Janvresse03} treated semigroups of random rotations on $\SO(n)$. Maslen~\cite{Maslen03} analyzed the spectrum by representation-theoretic methods. Carlen--Carvalho--Loss~\cite{CCL} determined the exact gap and, in their treatment of the group process, identified it with the sphere gap. Caputo~\cite{Caputo08} subsequently gave a reduction method for spectral-gap calculations in binary-collision systems, including the Kac walk.

For the kernel in this paper, the exact formula is
\[
 \gamma_n=\frac{n+2}{2n(n-1)}\asymp n^{-1}.
\]
This is an input to our proof. It does not alone give a worst-start mixing estimate: a point mass has no square-integrable density relative to Haar. Moreover, the full endpoint law can retain a singular component at every finite time, for example on the event that every chosen coordinate plane is the same. Spectral contraction is powerful once a suitable regularity or comparison estimate is available, but establishing that estimate is part of the problem.

For the full coordinate-plane matrix walk, the weak-distance bounds of Diaconis--Saloff-Coste were followed by the unpublished 2007 preprint of Pak--Sidenko~\cite{PakSidenko07}, which improved the weak mixing upper bound from order $n^4\log n$ to order $n^{5/2}\log n$. Sidenko's 2008 dissertation~\cite{Sidenko08} also records this work. Oliveira's preprint of 2007, published in 2009~\cite{Oliveira}, proved the $O(n^2\log n)$ Wasserstein bound, together with a quadratic lower bound. Its local-to-global transportation theorem provides a way to turn infinitesimal couplings into global contraction; Section~\ref{sec:coupling} proves the specialization used here.

Total variation required additional ideas. Jiang~\cite{Jiang17} proved polynomial mixing for the coordinate-plane walk on the special orthogonal group, using spectral information together with quantitative control of the endpoint parametrization. Pillai--Smith~\cite{PS18}, first circulated in 2016 and published in 2018, obtained an $O(n^4\log n)$ upper bound through a non-Markovian coupling and estimates for a dependent random-matrix linearization. Their April 2026 preprint~\cite{PS26} gives an $O(n^2\log n)$ total-variation upper bound. It first contracts two copies in Wasserstein distance and then analyzes a Gaussian perturbation of the angle noise through a nondegenerate linearization. The preprint conjectures that the logarithmic factor in its upper bound is necessary; it does not prove a matching logarithmic lower bound. The connection to discrete Malliavin-type estimates is particularly close to the covariance viewpoint used here, although our marginal-preservation step and resolvent estimates are different.

The following summary concerns the \emph{full coordinate-plane matrix walk}, not the sphere or a conjugation-invariant variant. The historical weak-distance entries use the conventions of the cited papers; they should not be read as sharp constants under a common accuracy normalization.
\begin{center}
\small
\begin{tabular}{>{\raggedright\arraybackslash}p{0.25\textwidth}>{\raggedright\arraybackslash}p{0.24\textwidth}>{\raggedright\arraybackslash}p{0.36\textwidth}}
\toprule
Work & Type of control & Upper bound or role\\
\midrule
Diaconis--Saloff-Coste (2000)~\cite{DSC00} & Weak-distance estimates & $O(n^4\log n)$; comparison framework\\[3pt]
Pak--Sidenko (2007), preprint~\cite{PakSidenko07} & Weak-distance estimates & $O(n^{5/2}\log n)$\\[3pt]
Oliveira (2009)~\cite{Oliveira} & Wasserstein & $O(n^2\log n)$; quadratic lower bound\\[3pt]
Jiang (2017)~\cite{Jiang17} & Total variation & Polynomial upper bound\\[3pt]
Pillai--Smith (2018)~\cite{PS18} & Total variation & $O(n^4\log n)$\\[3pt]
Pillai--Smith (2026), preprint~\cite{PS26} & Total variation & $O(n^2\log n)$\\
\bottomrule
\end{tabular}
\end{center}
The table excludes withdrawn claims and does not assign a published status to a preprint. Theorem~\ref{thm:main} addresses the remaining quadratic-scale upper-bound question for this same coordinate-plane matrix kernel. It does not identify a sharp leading constant or a limiting profile.

\subsection{The sphere, redistribution chains, and cutoff}
The sphere mixing problem has its own chronology. Jiang~\cite{Jiang12} proved an $O(n^5(\log n)^3)$ total-variation upper bound for the sphere walk started from a coordinate vector. Pillai--Smith~\cite{PS17}, circulated in 2015 and published in 2017, established order $n\log n$ mixing from a worst-case starting point, with explicit constant-factor upper and lower bounds. These are sphere results and should not be entered as full-matrix total-variation bounds.

The July 2026 preprint of Jain--Mizgerd~\cite{JM26} proves total-variation cutoff for the sphere walk \emph{from a coordinate-vector start}, at
\[
 C_{\mathrm{BRW}}n\log n,\qquad C_{\mathrm{BRW}}\approx3.8916.
\]
The constant is determined by an extremal speed in a branching random walk; it is not the previously conjectured constant $2$. The stated starting condition is important. Their theorem does not assert worst-start cutoff for the full matrix walk. In particular, taking one column is a projection of our chain, and mixing of that projection neither determines nor rules out a later transition in the distribution of all columns.

There is also a useful energy-chain interpretation. Squaring the coordinates of the sphere walk produces a mass-redistribution process: the energy in a selected pair is split in $\operatorname{Beta}(1/2,1/2)$ proportions. The stationary energy distribution is $\operatorname{Dirichlet}(1/2,\ldots,1/2)$. This links the Kac model to exchange dynamics. Caputo--Quattropani--Sau~\cite{CQS25} study Wasserstein cutoff for a broad class of mean-field exchange models, including noisy redistribution dynamics. Their metric and state variable are not the matrix metric and full frame considered here. Related averaging and redistribution models are informative comparisons, not interchangeable formulations of matrix total variation.

A second family of comparisons consists of \emph{conjugation-invariant} rotation walks. Rosenthal~\cite{Rosenthal94} analyzed random rotations using characters, and Porod~\cite{Porod96} established cutoff results for random reflections. Hough--Jiang~\cite{HJ17} proved cutoff for the uniform-plane Kac walk, in which the plane is chosen uniformly from the space of two-dimensional subspaces rather than from the fixed coordinate planes. The increment measure in that setting is invariant under conjugation, allowing a different harmonic-analysis approach. The coordinate-plane kernel is invariant under signed coordinate permutations but not under all conjugations; the character estimates for the uniform-plane walk therefore do not directly apply.

These distinctions also explain why a quadratic upper bound is not evidence for ``no cutoff.'' Both a sharp transition within the quadratic scale and a non-sharp transition are compatible with Theorem~\ref{thm:main}. Section~\ref{sec:cutoff} states the exact pre-cutoff consequence and gives complete elementary examples of both possibilities on a quadratic scale. In kinetic theory, the term \emph{angular cutoff} has a different meaning, concerning the collision kernel; it should not be confused with mixing-time cutoff.

\subsection{Sampling, restricted observables, and algorithmic uses}
A coordinate rotation is inexpensive to apply and is naturally compatible with orthogonality constraints. This makes repeated plane rotations an appealing sampling primitive, but different applications may require substantially less than full total-variation approximation to Haar. Hastings~\cite{Hastings70} provides an early Monte Carlo context. Mitchell~\cite{Mitchell08} studies deterministic sampling of rotation groups by successive orthogonal images, a related geometric sampling question rather than a mixing theorem for the present random walk.

Jain, Pillai, Sah, Sawhney, and Smith~\cite{JPSSS22} use Kac's walk for fast, memory-efficient dimension reduction. Their work belongs to the line of fast Johnson--Lindenstrauss constructions initiated by Ailon--Chazelle~\cite{AilonChazelle06}. For dimension reduction, the relevant question is preservation of the geometry of a prescribed collection of vectors. This requirement is not equivalent to closeness of the whole random matrix law in total variation.

The preprint of Pillai, Smith, and Vaikuntanathan~\cite{PSV26} makes this distinction explicit by studying fixed-column and low-degree polynomial observables. It proves that such restricted tests can become Haar-like on scales shorter than full-matrix mixing. Vaikuntanathan--Zamir~\cite{VZ26} develop algorithmic uses of trapdoored matrices and restricted forms of randomness. These results help explain why Kac-type products are useful even before a full mixing theorem is available. They are related motivation, not external estimates assumed in the proof below.

Random-rotation ideas also appear in quantum information. Helsen, Nezami, Reagor, and Walter~\cite{HNRW22} study benchmarking of matchgate circuits using their orthogonal-group representation. Lu, Qin, Song, Yao, and Zhao~\cite{LQSYZ25} analyze a parallel Kac construction for pseudorandom unitaries. The latter uses a parallel unitary process and computational indistinguishability, not the discrete coordinate-plane $\SO(n)$ chain and information-theoretic TV distance here. Its update count cannot be compared directly with ours. Such applications illustrate the range of uses of rotation-based randomization without making any claim that the present worst-start bound is necessary for every application.

\subsection{Unpublished work, public questions, and the status of quadratic claims}
The unpublished record is relevant to the development of the problem. In addition to the Pak--Sidenko preprint and Sidenko dissertation already cited, the 2026 papers~\cite{PS26,JM26,PSV26} are identified here as preprints. Their public statements concern, respectively, the full matrix TV bound, sphere cutoff from a coordinate start, and restricted-observable mixing. The different formulations should not be compressed into a single unqualified claim about ``Kac mixing.''

Two MathOverflow discussions from 2011 record concrete conceptual questions about the matrix problem. In \emph{Decomposition of Haar measure other than Hurwitz's}~\cite{MOHaar}, the question concerns alternative products of Givens rotations, the angular distributions in a Haar parametrization, and the behavior of Jacobians. The discussion points to geometric sampling constructions, including~\cite{Mitchell08}. In \emph{rotation along a coordinate axis and Kac random walk}~\cite{MOKac}, the question concerns the interpretation of auxiliary subgroup averages in Maslen's spectral analysis. Both were posted under the name John Jiang. These discussions are useful records of the distinction between deterministic parametrizations, random coordinate words, and representation averages; neither is invoked as a theorem in this paper.

Liu's preprint~\cite{LiuWithdrawn26}, first posted on 29 August 2026, claimed quadratic total-variation mixing using discrete Malliavin and derivative-shell estimates. It was withdrawn by its author on 1 September 2026; the stated reason is that the main result needs further rigorous verification. We record that claim and its status because it concerns the same proposed scale. We do not treat its conclusion as established, infer a specific mathematical error from the withdrawal, or use any of its covariance or derivative estimates. The proof below supplies its own scalar and matrix-valued resolvent bounds, marginal-preserving coupling, and smoothing argument.

\subsection{What the matrix problem isolates}
The physical and probabilistic significance of the problem is therefore specific. On the physical side, the same local, energy-preserving collision operation is a basic part of Kac's finite-particle program. On the probabilistic side, the full matrix walk is a concrete high-dimensional chain with singular local transitions, a known spectral gap, and a strong nonlinear constraint. It tests how local randomness develops enough regularity to randomize an entire frame, and how transportation estimates can be combined with a separately justified TV smoothing step. These questions are related to, but distinct from, propagation of chaos, entropy production, algorithmic pseudorandomness, and cutoff for projected or conjugation-invariant walks. The proof will keep those distinctions, the update normalization, and every additional regularity input explicit.

\section{Covariances, partial traces, and elementary estimates}\label{sec:elementary}
Use the orthonormal basis
\[
 a_{ij}=(E_{ij}-E_{ji})/\sqrt2,\qquad i<j,
\]
of $\g\simeq\Lambda^2\mathbb R^n$. Write $\rho(U)=\Ad_U$, $P_a=aa^*$, $L_a=\ad_{\sqrt2a}$, and $\tau(A)=\Tr(A)/N$ for operators on $\g$. For a coordinate generator,
\begin{equation}\label{eq:coordinate-rotation}
 L_a^3=-L_a,\quad \norm{L_a}_{\op}=1,\quad
 O_a(\theta):=\rho(R_a(\theta))
 =I+\sin\theta L_a+(1-\cos\theta)L_a^2,\quad O_a a=a.
\end{equation}
Also $\norm{(O_a-I)z}\le2\norm{L_a z}$: $L_a$ is an isometry on its image, is zero on its kernel, and $O_a-I$ has norm at most two on the image.

For a block $F_m=R_m\cdots R_1$, its right covariance is
\[
 D_m=\sum_{s=1}^m v_sv_s^*,\qquad
 v_s=\rho(R_m\cdots R_{s+1})a_{e_s}.
\]
It satisfies the exact recursion
\begin{equation}\label{eq:cov-recursion}
 D_{t+1}=O_a(D_t+P_a)O_a^*.
\end{equation}
The left covariance is
\[
 B_m=\sum_s w_sw_s^*,\qquad
 w_s=\rho((R_{s-1}\cdots R_1)^{-1})a_{e_s}.
\]
It is pointwise conjugate to $D_m$ and has the same marginal distribution, by reversing and inverting all the independent rotations. A column $w_s$ has zero derivative in its own angle.

We also allow a deterministic two-color schedule. Its total retained covariance $S_t$ and one colored covariance $Y_t$ obey
\begin{equation}\label{eq:colored}
 S_{t+1}=O_a(S_t+\varepsilon_tP_a)O_a^*,\qquad
 Y_{t+1}=O_a(Y_t+\chi_tP_a)O_a^*,
 \quad 0\le\chi_t\le\varepsilon_t\le1,
\end{equation}
where $\varepsilon_t,\chi_t$ are deterministic zero-one variables and $S_0=Y_0=0$. All processes considered in probabilistic lemmas have at most $4N$ actual rotations. Inactive steps have $\varepsilon_t=\chi_t=0$; they still rotate both covariances. Always $0\preceq Y_t\preceq S_t$.

The filtration $\mathcal F_t$ consists of the complete input pairs and angles through time $t$. The update at index $t$ uses a fresh generator $a=a_{e_{t+1}}$ and angle $\theta_{t+1}$ independent of $\mathcal F_t$. The quantities $\varepsilon_t,\chi_t$ and the regularization path used later are deterministic, so conditioning on $\mathcal F_t$ does not change their values. A displayed conditional expectation over $a$ refers to its uniform distribution on the coordinate orthonormal basis.

We distinguish three norms. The norm on $\mathfrak{so}(n)$ is the fundamental Frobenius norm; on operators on this $N$-dimensional space, $\|\cdot\|_{\op}$ and $\|\cdot\|_{\HS}$ are operator and Hilbert--Schmidt norms. The output of the partial trace is an $n\times n$ matrix and uses its Frobenius norm $\|\cdot\|_F$. We write $\|Z\|_{L^2(F)}=(\E\|Z\|_F^2)^{1/2}$. An unadorned norm of a covariance or resolvent means operator norm, except inside an explicitly labeled $L^2(F)$ expression. Nuclear norms of operators are denoted by $\|\cdot\|_{S_1}$.

\subsection{The two trivializations and their marginal laws}
The differential of the endpoint product is
\[
 \partial_{\theta_s}F_m
 =R_m\cdots R_{s+1}(\sqrt2a_{e_s})R_s\cdots R_1.
\]
Multiplication on the right by $F_m^{-1}$ gives $\sqrt2v_s$, while multiplication on the left by $F_m^{-1}$ gives $\sqrt2w_s$. The factor $R_s$ disappears in the latter conjugation because it fixes $a_{e_s}$. In particular, $w_s$ depends on earlier rotations and on $e_s$, but not on $\theta_s$ or later rotations.

To verify equality of the covariance laws without assuming independence of their entries, transform the entire input word by
\[
 (e_1,\theta_1),\ldots,(e_m,\theta_m)
 \longmapsto (e_m,-\theta_m),\ldots,(e_1,-\theta_1).
\]
This preserves the product input distribution. Its suffixes are the inverse prefixes of the original word. The transformed right rank-one columns therefore agree, in reverse order, with the original left rank-one columns. Any sign change of a generator cancels in its projection. This proves equality in distribution; pointwise conjugacy gives equality of their spectra for the original word.

\begin{lemma}\label{lem:norm-tail}
For these processes,
\begin{equation}\label{eq:exp-tail}
 \E\tau(e^{\theta S_t})\le e^{4(e^\theta-1)},\qquad
 \PP\{\max_{t\le4N}\norm{S_t}_{\op}>u\}\le Ne^{4(e-1)-u}.
\end{equation}
In particular every fixed moment of $1+\norm{S_t}_{\op}$ is $O((\log(en))^j)$ at order $j$.
\end{lemma}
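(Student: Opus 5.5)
The plan is to control $\Tr e^{\theta S_t}$ step by step using the recursion~\eqref{eq:colored}, the Golden--Thompson inequality, and averaging over the uniformly chosen generator. Then a Chernoff bound gives the tail, and integrating the tail gives the moments. The first estimate is in fact a supermartingale-type inequality for $\E\Tr e^{\theta S_t}$, and I will treat it that way.

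\emph{One-step bound.} Fix $\theta\ge0$. Since $O_a$ is orthogonal, conjugation does not change the trace of the exponential:
\[
\Tr e^{\theta S_{t+1}}=\Tr e^{\theta(S_t+\varepsilon_tP_a)}.
\]
Golden--Thompson gives $\Tr e^{\theta(S_t+\varepsilon_tP_a)}\le\Tr\bigl(e^{\theta S_t}e^{\theta\varepsilon_tP_a}\bigr)$. Because $P_a$ is a rank-one orthogonal projection, $e^{\theta\varepsilon_tP_a}=I+(e^{\theta\varepsilon_t}-1)P_a$. Hence
\[
\Tr e^{\theta S_{t+1}}\le\Tr e^{\theta S_t}+(e^{\theta\varepsilon_t}-1)\ip{a}{e^{\theta S_t}a}.
\]
Now condition on $\mathcal F_t$; the schedule $\varepsilon_t$ is deterministic. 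The generator $a$ is uniform over the $N$ orthonormal basis vectors $a_{ij}$, so
\[
\E[\ip{a}{Xa}\mid\mathcal F_t]=\Tr X/N .
\]
Using $\varepsilon_t\le1$, this gives
\[
\E\Tr e^{\theta S_{t+1}}\le\Bigl(1+\frac{e^\theta-1}{N}\Bigr)\E\Tr e^{\theta S_t}.
\]
Iterating from $S_0=0$, where $\tau(I)=1$, over at most $4N$ steps yields
\[
\E\tau(e^{\theta S_t})\le(1+(e^\theta-1)/N)^{4N}\le e^{4(e^\theta-1)}.
\]

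\emph{Tail.} First, $\norm{S_t}_{\op}$ is nondecreasing in $t$. Indeed $S_t+\varepsilon_tP_a\succeq S_t$, and orthogonal conjugation preserves the operator norm. So the maximum over $t\le4N$ equals the norm at the final time $T\le4N$. Next take $\theta=1$. Since $S_T\succeq0$, we have $\Tr e^{S_T}\ge e^{\norm{S_T}_{\op}}$. Markov's inequality then gives
\[
\PP\{\norm{S_T}_{\op}>u\}\le e^{-u}\E\Tr e^{S_T}=Ne^{-u}\E\tau(e^{S_T})\le Ne^{4(e-1)-u}.
\]
This is the second inequality in~\eqref{eq:exp-tail}.

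\emph{Moments.} Write $\E(1+\norm{S_t})^j=\int_0^\infty j(1+u)^{j-1}\PP\{\norm{S_t}>u\}\,du$. Split the integral at $u_0=\log N+4(e-1)$. The part below $u_0$ contributes at most $(1+u_0)^j=O(\ell^j)$. Above $u_0$ the tail is $e^{-(u-u_0)}$, so that part contributes $O_j((1+u_0)^{j-1})$. Since $\log N\le2\log(en)$, the total is $O(\ell^j)$.

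The only delicate point is the Golden--Thompson step. It is used only with one factor a rank-one projection, where the exponential is explicit, so no noncommutative difficulty remains. The other thing to check is monotonicity of the operator norm, which is what lets a single Markov bound, rather than a union bound over $4N$ times, produce the prefactor $N$.
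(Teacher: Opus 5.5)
Your proof is correct and is essentially the paper's argument: Golden--Thompson with the explicit exponential of the rank-one projection, averaging over the uniform generator and iterating over at most $4N$ steps, then monotonicity of $\norm{S_t}_{\op}$ so that one exponential Markov bound at $\theta=1$ controls the path maximum, and finally integration of the tail. The only slip is cosmetic: the layer-cake identity should read $\E(1+X)^j=1+\int_0^\infty j(1+u)^{j-1}\PP\{X>u\}\,du$, and the omitted constant does not affect the $O(\ell^j)$ bound.
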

\begin{proof}
Golden--Thompson bounds $\Tr e^{\theta(S+P_a)}$ by $\Tr(e^{\theta S}e^{\theta P_a})$. Since $\E_a P_a=I/N$,
\[
 \E_a e^{\theta P_a}=(1+(e^\theta-1)/N)I.
\]
Iteration proves the first assertion. The eigenvalues increase under a positive rank-one addition and do not change under conjugation. The final norm is therefore the path supremum. Exponential Markov inequality with $\theta=1$ proves the tail; integrate it to obtain the moment assertion.
\end{proof}

\subsection{Matrix facts used below}
We record the relevant matrix inequalities with their norms and normalizations. Golden--Thompson, the classical trace inequality used in Lemma~\ref{lem:norm-tail}, states that
\[
 \Tr e^{A+B}\le\Tr(e^Ae^B)
\]
for finite-dimensional self-adjoint $A,B$; see~\cite{Golden,Thompson}. The matrices on the right need not commute. In its conditional uses later, an upper bound $\E(e^B\mid\mathcal F)\preceq cI$ can be paired with the positive matrix $e^A$ inside the trace when $A$ is $\mathcal F$-measurable.

\begin{lemma}[Rank-one and low-rank inverse identities]\label{lem:inverse-facts}
Let $S,C$ be positive-semidefinite operators on a finite-dimensional real inner-product space, let $z>0$, and put $T=(S+zI)^{-1}$.
For any vector $a$,
\begin{equation}\label{eq:SM-elementary}
 (S+aa^*+zI)^{-1}
 =T-\frac{(Ta)(Ta)^*}{1+\langle a,Ta\rangle}.
\end{equation}
If $T_C=(S+C+zI)^{-1}$ and $\rank C\le q$, then
\begin{equation}\label{eq:low-rank-elementary}
 0\preceq T-T_C\preceq z^{-1}I,\qquad
 \rank(T-T_C)\le q,\qquad
 \|T-T_C\|_{\HS}^2\le qz^{-2}.
\end{equation}
Moreover, $\partial_z T=-T^2$; if $0<d<z$ and $z-d\ge\eta>0$, then
\begin{equation}\label{eq:shift-Taylor-elementary}
 (S+(z-d)I)^{-1}=T+dT^2+E,
 \qquad \|E\|_{\op}\le d^2\eta^{-3}.
\end{equation}
\end{lemma}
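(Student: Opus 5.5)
The three assertions of Lemma~\ref{lem:inverse-facts} are proved separately; each is elementary linear algebra. The one place needing care is the low-rank bound $T-T_C\preceq z^{-1}I$, discussed in the second paragraph.

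For \eqref{eq:SM-elementary}, multiply $S+aa^*+zI=T^{-1}+aa^*$ by the proposed right-hand side. Expanding, the terms collapse to
\[
 I+aa^*T-\frac{a(Ta)^*+a\langle a,Ta\rangle(Ta)^*}{1+\langle a,Ta\rangle}.
\]
Since $T$ is self-adjoint, $aa^*T=a(Ta)^*$, and the fraction equals exactly $a(Ta)^*$. So the product is $I$. Note that $1+\langle a,Ta\rangle\ge1>0$ because $T\succ0$.

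For \eqref{eq:low-rank-elementary}, I will use the resolvent identity $T-T_C=T\,C\,T_C$.
\begin{itemize}
\item Rank: since $\rank C\le q$, the identity gives $\rank(T-T_C)\le q$ at once.
\item Positivity: operator monotonicity of inversion on positive definite operators turns $S+zI\preceq S+C+zI$ into $T_C\preceq T$.
\item Upper bound: $T-T_C\preceq T\preceq z^{-1}I$, because $T_C\succeq0$ and $S\succeq0$.
\item Hilbert--Schmidt norm: the operator $T-T_C$ is positive, has at most $q$ nonzero eigenvalues, and each lies in $[0,z^{-1}]$. Hence $\|T-T_C\|_{\HS}^2\le qz^{-2}$.
\end{itemize}
Operator monotonicity of inversion is the only nontrivial ingredient. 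If it should be avoided, one can instead write $C=\sum_{k\le q}c_kc_k^*$ and iterate \eqref{eq:SM-elementary}. Each step subtracts a positive rank-one term, which gives positivity and the rank bound directly.

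For \eqref{eq:shift-Taylor-elementary}, diagonalize $S$ in an orthonormal eigenbasis; everything then reduces to scalar functions of each eigenvalue $\lambda\ge0$. The derivative identity $\partial_zT=-T^2$ follows from $\partial_z(\lambda+z)^{-1}=-(\lambda+z)^{-2}$. For the shift, set $x=\lambda+z$. The exact scalar identity
\[
 \frac1{x-d}=\frac1x+\frac d{x^2}+\frac{d^2}{x^2(x-d)}
\]
holds, and $x-d\ge z-d\ge\eta$ and $x\ge z>\eta$. The remainder is therefore positive and at most $d^2\eta^{-3}$. Taking the supremum over the spectrum gives $\|E\|_{\op}\le d^2\eta^{-3}$.
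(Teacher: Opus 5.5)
Your proof is correct. The Sherman--Morrison step is the same direct multiplication the paper uses; the other two parts take slightly different elementary routes.

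For \eqref{eq:low-rank-elementary}, the paper sets $K=C^{1/2}$ and uses the symmetric factorization $T-T_C=TK(I+K^*TK)^{-1}K^*T$. That single identity displays both $T-T_C\succeq0$ and $\rank(T-T_C)\le\rank C$. Your resolvent identity $T-T_C=TCT_C$ gives the rank bound immediately, but it is not manifestly positive. So you need operator monotonicity of inversion as a separate fact. Your fallback of iterating \eqref{eq:SM-elementary} over $C=\sum_{k\le q}c_kc_k^*$ is precisely the paper's factorization unrolled one rank at a time, and it removes that extra input.

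For \eqref{eq:shift-Taylor-elementary}, the paper uses $\partial_z^2T=2T^3$ and the integral remainder $E=2\int_0^d(d-u)(S+(z-u)I)^{-3}\,du$, bounding the integrand by $\eta^{-3}$. Your scalar identity instead yields the closed form $E=d^2T^2(S+(z-d)I)^{-1}$. This form can also be obtained without diagonalizing, by applying the resolvent identity twice. It is more explicit. The paper's Taylor argument is the generic one and does not rely on the special algebraic form of $x\mapsto x^{-1}$.

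Either version supplies everything the later sections use.
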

\begin{proof}
Multiplying the right side of~\eqref{eq:SM-elementary} by $S+aa^*+zI$ gives the identity, since $(S+zI)Ta=a$ and the scalar denominator is positive.
For a general $C$, write $K=C^{1/2}$. Direct multiplication gives
\[
 T-T_C=TK(I+K^*TK)^{-1}K^*T.
\]
This is positive semidefinite and of rank at most $\rank K=\rank C$. Since $T_C\succeq0$ and $T\preceq z^{-1}I$, the operator-norm bound follows. A rank-$q$ operator whose norm is at most $z^{-1}$ has squared Hilbert--Schmidt norm at most $qz^{-2}$.
Differentiating $(S+zI)T=I$ proves $\partial_zT=-T^2$ and then $\partial_z^2T=2T^3$. Taylor's integral remainder gives
\[
 E=2\int_0^d(d-u)(S+(z-u)I)^{-3}\,du.
\]
Its norm is at most $2\eta^{-3}\int_0^d(d-u)\,du=d^2\eta^{-3}$.
\end{proof}

\begin{lemma}[Accumulating centered increments before a rotation]\label{lem:rotated-energy}
Let $\mathcal H$ be a finite-dimensional Hilbert space. Suppose
\[
 Z_{t+1}=\mathcal O_t(Z_t+b_t+\xi_t),
\]
where $\mathcal O_t$ is an orthogonal operator, $Z_t,b_t$ are measurable with respect to the past $\mathcal F_t$, and $\E(\xi_t\mid\mathcal F_t)=0$. The fresh orthogonal operator may depend on $\xi_t$. For $t_0<T$,
\begin{equation}\label{eq:rotation-energy-lemma}
 \|Z_T\|_{L^2(\mathcal H)}
 \le \|Z_{t_0}\|_{L^2(\mathcal H)}
 +\sum_{t=t_0}^{T-1}\|b_t\|_{L^2(\mathcal H)}
 +\left(\sum_{t=t_0}^{T-1}\E\|\xi_t\|_{\mathcal H}^2\right)^{1/2}.
\end{equation}
\end{lemma}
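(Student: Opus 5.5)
The plan is to decompose $Z_t$ into a ``drift part'' and a ``martingale part'', each transported by the same orthogonal products, and then bound the two parts separately. Orthogonality kills all norm growth, so only the triangle inequality and Pythagoras are needed. For $t_0\le s\le T$ write $\Phi_{s}^{T}=\mathcal O_{T-1}\cdots\mathcal O_{s}$, with $\Phi_T^T=I$. Unrolling the recursion gives the exact identity
\[
 Z_T=\Phi_{t_0}^TZ_{t_0}+\sum_{t=t_0}^{T-1}\Phi_t^T b_t+M_T,
\]
where the remainder $M$ is defined recursively by $M_{t_0}=0$ and $M_{t+1}=\mathcal O_t(M_t+\xi_t)$. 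Equivalently, we set $M_t=Z_t-\Phi_{t_0}^tZ_{t_0}-\sum_{s<t}\Phi_s^tb_s$ and check the recursion directly.

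\textbf{Step 1: the first two terms.} Each $\Phi$ is orthogonal pointwise, whether or not it is random. Hence $\|\Phi_{t_0}^TZ_{t_0}\|_{L^2}=\|Z_{t_0}\|_{L^2}$ and $\|\Phi_t^Tb_t\|_{L^2}=\|b_t\|_{L^2}$. Minkowski's inequality in $L^2(\mathcal H)$ then produces the first two terms of \eqref{eq:rotation-energy-lemma}, and it remains to show $\E\|M_T\|^2\le\sum_t\E\|\xi_t\|^2$.

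\textbf{Step 2: the martingale part.} Since $\mathcal O_t$ is orthogonal,
\[
 \|M_{t+1}\|^2=\|M_t+\xi_t\|^2=\|M_t\|^2+2\langle M_t,\xi_t\rangle+\|\xi_t\|^2 .
\]
The key point is that the rotation is applied after the increment is added, so its dependence on $\xi_t$ never enters the cross term. Moreover $M_t$ is $\mathcal F_t$-measurable, because it is built from $Z_{t_0}$, the $b_s$, and the $\mathcal O_s$ for $s<t$. I will take this as part of the hypothesis that the past operators are measurable with respect to $\mathcal F_t$, which is the intended filtration convention since $\mathcal O_s$ and $\xi_s$ belong to $\mathcal F_{s+1}$. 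Conditioning on $\mathcal F_t$ and using $\E(\xi_t\mid\mathcal F_t)=0$ kills the cross term. Summing over $t$ gives the identity $\E\|M_T\|^2=\sum_t\E\|\xi_t\|^2$, which completes the bound.

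\textbf{Main obstacle.} The only delicate point is integrability: the cross-term expectation must make sense. If all the second moments on the right-hand side are finite, then by induction each $M_t$ is in $L^2$, so $\langle M_t,\xi_t\rangle$ is integrable by Cauchy--Schwarz and the conditioning step is justified. If some right-hand moment is infinite, the inequality is trivial.
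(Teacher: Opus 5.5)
Your proof is correct and is essentially the paper's argument: the same martingale part $M_{t+1}=\mathcal O_t(M_t+\xi_t)$, whose cross term is killed by conditional centering after orthogonality removes $\mathcal O_t$, and the drift part $Z_t-M_t$ controlled by orthogonality and Minkowski. You unroll that drift part through the products $\Phi_t^T$, whereas the paper bounds it pathwise by induction. The $\mathcal F_t$-measurability of the past operators that you flag is the same tacit convention the paper uses when it asserts that $M_t$ is $\mathcal F_t$-measurable (it holds in all the paper's applications, and could even be dropped, since $\E\|Z_{t+1}\|^2=\E\|Z_t+b_t\|^2+\E\|\xi_t\|^2$ already yields the bound by a scalar induction).
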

\begin{proof}
Define $M_{t_0}=0$ and $M_{t+1}=\mathcal O_t(M_t+\xi_t)$. The variable $M_t$ is measurable with respect to $\mathcal F_t$. Orthogonality removes $\mathcal O_t$ before taking expectations, and conditional centering removes the cross term:
\[
 \E\|M_{t+1}\|^2=\E\|M_t\|^2+\E\|\xi_t\|^2.
\]
Thus its final squared norm is the sum of the increment energies. The difference obeys
$Z_{t+1}-M_{t+1}=\mathcal O_t(Z_t-M_t+b_t)$, so its norm is at most the initial norm plus the sum of the drift norms, pathwise. Minkowski gives~\eqref{eq:rotation-energy-lemma}. In particular, we never assert that the rotated noises $\mathcal O_t\xi_t$ are martingale differences.
\end{proof}

\subsection{Partial traces}
For an arbitrary real operator $A$ on $\Lambda^2\mathbb R^n$, extend its coefficients antisymmetrically to ordered pairs, and put
\begin{equation}\label{eq:Rdef}
 (\R A)_{ik}=\sum_j A_{ij,kj},\qquad
 \R_0A=\R A-\frac{2\Tr A}{n}I_n.
\end{equation}
The output norm is the fundamental Frobenius norm. This map is equivariant, $\R_0 I_N=0$, and
\begin{equation}\label{eq:Rbounds}
 \norm{\R_0 A}_F\le C\sqrt n\norm A_{\HS}
 \le Cn^{3/2}\norm A_{\op},\qquad
 \R(uv^*)=-2uv,\qquad
 \norm{\R_0(uv^*)}_F\le2\norm u\norm v.
\end{equation}
In the middle identity $u,v$ are viewed as fundamental skew matrices on the right. The ordered coefficient of $u$ is $\sqrt2$ times its fundamental entry; contraction gives $-2uv$. The general bound is Cauchy--Schwarz in the contracted index. Removing the scalar part is an orthogonal projection.

\begin{lemma}[Haar simple-bivector estimates]\label{lem:haar}
For a Haar-uniform unit simple bivector $w$ and any real operator $A$, including nonsymmetric $A$,
\begin{align}
 \E|\ip w{Aw}-\tau A|^2&\le Cn^{-1}\norm A_{\op}^2,\label{eq:haar-variance}\\
 \E\norm{[w,Aw]}^2&\le Cn^{-3}
 \bigl(\norm{A-\tau(A)I}_{\HS}^2+\norm{\R_0A}_F^2\bigr).
 \label{eq:haar-commutator}
\end{align}
\end{lemma}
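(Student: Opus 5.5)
A Haar-uniform unit simple bivector can be realized as $w=\ip{}{}$-normalized $x\wedge y$, i.e.\ $w=(xy^T-yx^T)/\sqrt2$, where $(x,y)$ are the first two columns of a Haar orthogonal matrix. Equivalently, $w=\rho(U)a_{12}$ with $U$ Haar. The plan is to reduce both estimates to fourth-moment (Weingarten) computations for the pair $(x,y)$, replaced where convenient by Gaussian vectors. We first make two reductions.

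First, we symmetrize. In~\eqref{eq:haar-variance} only the symmetric part of $A$ enters $\ip w{Aw}$, and its operator norm is at most $\norm A_{\op}$. In~\eqref{eq:haar-commutator} we split $A$ into symmetric and antisymmetric parts and treat them separately. Each part has Hilbert--Schmidt norm and partial trace controlled by those of $A$, since $\R_0$ is linear and the transpose-compatible projections are contractions.

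Second, we recenter. Replacing $A$ by $A-\tau(A)I$ changes neither side of~\eqref{eq:haar-variance}, up to the factor $2$ in $\norm{A-\tau A}_{\op}\le2\norm A_{\op}$. The commutator side is unchanged because $[w,w]=0$, and $\R_0$ kills $I$. So we may assume $\tau A=0$.

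For~\eqref{eq:haar-variance} with symmetric traceless $A$, the quantity $f(U)=\ip{\rho(U)a_{12}}{A\rho(U)a_{12}}$ is a quadratic form in the entries of two orthonormal vectors. We would bound its variance by concentration rather than by exact Weingarten sums. Concretely, $f$ is Lipschitz on $\SO(n)$ with constant $C\norm A_{\op}$ in the Hilbert--Schmidt metric, because $\rho(U)a_{12}$ moves at unit rate. Gromov--Milman/Poincaré on $\SO(n)$ with Ricci curvature of order $n$ then gives variance at most $C\norm A_{\op}^2/n$. The mean is $\tau A$ because the orbit of $a_{12}$ spans $\g$ irreducibly. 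Alternatively, the spectral gap of Haar Brownian motion gives the same bound. This step should be routine.

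The main obstacle is~\eqref{eq:haar-commutator}, where the $n^{-3}$ rate must come out with the specific norms on the right-hand side. Write $w=x\wedge y$ and expand $\E\norm{[w,Aw]}^2$ as a degree-eight polynomial in $(x,y)$, contracted against $A\otimes A$. We would compute its Haar average by replacing $(x,y)$ with independent standard Gaussians $(g,h)/\sqrt n$ and Gram--Schmidt. The correction from orthonormalization is lower order with the same index structure and can be absorbed into the constant.

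Under the Gaussian model, Wick's theorem writes the expectation as a sum over pairings. Each pairing is a contraction of $A\otimes A$ that reduces to one of three invariants: $\Tr(A^*A)$-type terms (giving $\norm A_{\HS}^2$), $\norm{\R A}_F^2$-type terms (where two indices of $A$ are contracted within the same factor), and $(\Tr A)^2$, which vanishes after recentering. The scaling is $n^{-4}$ from the eight normalized vectors, times at most one free index sum worth $n$. This gives $n^{-3}$.

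The key check is that no pairing produces a term such as $\norm A_{\op}^2$ times a power of $n$ that is not controlled by these invariants. By $\SO(n)$-equivariance of the map $A\mapsto\E[w,Aw]\otimes[w,Aw]$, any invariant quadratic form in $A$ is a combination of the finitely many $O(n)$-invariant quadratic contractions of a tensor with four fundamental index pairs. Each such contraction is bounded by $\norm A_{\HS}^2$, by $\norm{\R A}_F^2$, or by $|\Tr A|^2$, with coefficients that we bound by the $n^{-3}$ count above. Finally, $\R A=\R_0A+\tfrac{2\Tr A}{n}I$, and $\Tr A=0$ after recentering, so the right-hand side is exactly as stated in~\eqref{eq:haar-commutator}.
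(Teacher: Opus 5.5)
Your argument is viable, and it takes a different route from the paper in both halves.

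\textbf{The variance bound.} For \eqref{eq:haar-variance} the paper proves a single Wick fourth-moment bound, $\E\ip{w}{Tw}^2\le Cn^{-4}\bigl((\Tr T)^2+\|\R T\|_F^2+\|T\|_{\HS}^2\bigr)$ for symmetric $T$. It applies this to $\operatorname{sym}(A-\tau(A)I)$, using $\|\R T\|_{\op}\le(n-1)\|T\|_{\op}$. Your Lipschitz-plus-Poincar\'e argument is a correct and shorter alternative. It uses Ricci curvature $(n-2)/4$ for the Frobenius metric, and $\E ww^*=I/N$ by Schur, since a symmetric $\Ad$-equivariant operator on the irreducible $\g$ ($n\ge5$) is scalar. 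It imports Lichnerowicz or Bakry--\'Emery, whereas the paper stays elementary.

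\textbf{The commutator bound.} For \eqref{eq:haar-commutator} the paper never expands a degree-eight polynomial. It writes $\ip{b}{[w,Aw]}=\ip{w}{T_bw}$ with the symmetric $T_b=(A^TK_b-K_bA)/2$ and reuses the same fourth-moment bound for each basis element $b$. It must then sum $\|T_b\|_{\HS}^2$, $(\Tr T_b)^2$ and $\|\R T_b\|_F^2$ over $b$, and the last sum needs the reshuffled contraction $\mathcal S_A$. Your direct expansion avoids $T_b$ and $\mathcal S_A$, at the price of more Wick terms to classify. Your symmetric/antisymmetric split is unnecessary there.

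\textbf{Two points you must make precise.}

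First, no Gram--Schmidt correction arises. With $g\wedge h=(gh^T-hg^T)/\sqrt2$ one has exactly $g\wedge h=r\,w$, where $r=\|g\|\,\|h_\perp\|$ is independent of the Haar simple bivector $w$. Hence $\E\|[w,Aw]\|^2=\E\|[g\wedge h,A(g\wedge h)]\|^2/\E r^4$ with $\E r^4=n(n-1)(n+1)(n+2)$. Replace your ``lower-order correction'' remark by this identity.

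Second, the $n^{-3}$ rate rests entirely on your assertion that each Wick term has at most one free index loop. Your invariant-theory paragraph cannot supply this, because classifying invariant contractions says nothing about the $n$-dependence of their coefficients. The assertion is true, and here is why.
\begin{itemize}
\item In $\|[z,Az]\|^2$, both slots of each inner copy of $z$ and one slot of each outer copy are contracted into an $A$.
\item The only index joining two $z$-slots directly is the output index shared by the two outer copies. It does so only when the same commutator term is chosen in both factors.
\item Wick pairings join only $z$-slots, so a $\delta$-cycle avoiding $A$ must use this edge. Hence there is at most one such cycle, of length two, contributing one factor $n$.
\item Every other index chain joins two slots of $A\otimes A$. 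If one factor has $p\in\{0,1,2\}$ internally joined pairs, so does the other.
\item A pair inside one antisymmetric index pair vanishes. So the contraction is bounded by $C\|A\|_{\HS}^2$, $C\|\R A\|_F^2$ or $C(\Tr A)^2$ for $p=0,1,2$ respectively.
\end{itemize}
After centering, $\Tr A=0$ and $\R(A-\tau(A)I)=\R_0A$, which gives \eqref{eq:haar-commutator}. With these two additions your proof is complete.
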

\begin{proof}
For symmetric $T$, Gaussian Wick contraction of $z=g\wedge h=rw$, with independent standard Gaussian $g,h$, gives
\begin{equation}\label{eq:fourth-moment}
 \E\ip w{Tw}^2\le\frac C{n^4}
 \bigl((\Tr T)^2+\norm{\R T}_F^2+\norm T_{\HS}^2\bigr).
\end{equation}
The radial and angular assertions can be seen directly from Gaussian orthogonal decomposition. Write $u=g/\|g\|$ and decompose $h=\langle h,u\rangle u+h_\perp$. Conditional on $u$, the vector $h_\perp$ is a standard Gaussian in $u^\perp$, and its length is independent of its direction. Hence
\[
 z=\|g\|\,\|h_\perp\|\,
       u\wedge\frac{h_\perp}{\|h_\perp\|}.
\]
The two directions form a Haar orthonormal two-frame; they are independent of the lengths. The squared lengths have chi-squared laws with $n$ and $n-1$ degrees of freedom, independently. Their second moments are $n(n+2)$ and $(n-1)(n+1)$, giving
$\E r^4=n(n-1)(n+1)(n+2)$.
Furthermore $\E ww^*=I_N/N$: determinant-one coordinate signs kill off-diagonal coordinate-pair products, and coordinate permutations make the diagonal entries equal. Their sum is $\E\|w\|^2=1$. This also verifies the centering by $\tau(A)$ used in the lemma without any additional representation-theoretic assumption.

Explicitly,
$\ip z{Tz}=\sum_{i,j,k,l}T_{ij,kl}g_i h_j g_k h_l$.
For clarity, write the indices of the second copy as $(a,b,c,d)$. The $g$ pairings are the three pairings of $(i,k,a,c)$ and the $h$ pairings are those of $(j,l,b,d)$. Define
\[
 B(T)=\sum_{i,j,k,l}T_{ij,kl}T_{il,kj},\qquad
 |B(T)|\le4\|T\|_{\HS}^2.
\]
The factor four in this bound comes from using ordered antisymmetric pairs instead of $i<j$ and $k<l$. All nine Wick contractions are displayed here:
\[
\begin{array}{c|ccc}
 & (jl)(bd)&(jb)(ld)&(jd)(lb)\\ \hline
 (ik)(ac)&4(\Tr T)^2&\|\R T\|_F^2&\|\R T\|_F^2\\
 (ia)(kc)&\|\R T\|_F^2&4\|T\|_{\HS}^2&B(T)\\
 (ic)(ka)&\|\R T\|_F^2&B(T)&4\|T\|_{\HS}^2
\end{array}
\]
For example, the upper-left entry sets $k=i,c=a,l=j,d=b$ and produces
$(\sum_{i,j}T_{ij,ij})^2=4(\Tr T)^2$. An adjacent entry is
$\sum_{j,l}(\sum_iT_{ij,il})^2=\|\R T\|_F^2$. The middle diagonal entry identifies the two copies without crossing and gives $\sum_{i,j,k,l}T_{ij,kl}^2=4\|T\|_{\HS}^2$. The crossed identifications give $B(T)$. The table bounds the numerator by
\[
 4(\Tr T)^2+4\|\R T\|_F^2+16\|T\|_{\HS}^2.
\]
Division by $\E r^4=n(n-1)(n+1)(n+2)$ proves~\eqref{eq:fourth-moment}.

Apply it to $\operatorname{sym}(A-\tau(A)I)$ to obtain~\eqref{eq:haar-variance}, using $\norm{\R T}_{\op}\le(n-1)\norm T_{\op}$. The latter follows by Cauchy--Schwarz in
$\sum_j\ip{u\wedge e_j}{T(v\wedge e_j)}$.

For the commutator bound, subtract $\tau(A)I$ first. For an orthonormal basis $b$ of $\g$, set $K_b=\ad_b$ and
$T_b=(A^TK_b-K_bA)/2$.
Then $T_b$ is symmetric and $\ip b{[w,Aw]}=\ip w{T_bw}$.
The completeness identity
\[
 \sum_b b_{ij}b_{kl}=\tfrac12(\delta_{ik}\delta_{jl}-\delta_{il}\delta_{jk})
\]
gives $\sum_b K_b^*K_b=(n-2)I$, and hence
$\sum_b\norm{T_b}_{\HS}^2\le Cn\norm A_{\HS}^2$.
It also gives
$\sum_a[a,Aa]=(\R A-(\R A)^T)/2$, so
$\sum_b(\Tr T_b)^2\le C\norm{\R A}_F^2$.
For the remaining contractions, put
\[
 (\mathcal S_A(b))_{ik}=\sum_{j,p}b_{jp}A_{ip,kj}.
\]
An index expansion gives
\[
 \R(K_bA)=b\R A+\mathcal S_A(b),\qquad
 \R(AK_b)=(\R A)b+\mathcal S_A(b).
\]
Here is an explicit estimate for the contraction that could otherwise lose a dimension factor. For fixed $i,k$, regard $Z^{(i,k)}_{jp}=A_{ip,kj}$ as a fundamental matrix. Since the coordinate $b$ form an orthonormal basis of the skew matrices,
\[
 \sum_b\left|\sum_{j,p}b_{jp}Z^{(i,k)}_{jp}\right|^2
 =\|\operatorname{skew}Z^{(i,k)}\|_F^2
 \le\sum_{j,p}|Z^{(i,k)}_{jp}|^2.
\]
Summing $i,k$ and converting ordered pairs to the operator norm convention gives
\begin{equation}\label{eq:reshuffled-exact-bound}
 \sum_b\|\mathcal S_A(b)\|_F^2\le4\|A\|_{\HS}^2.
\end{equation}
The left-product identity follows directly from
\[
 (K_bA)_{ij,kl}=\sum_p\bigl(b_{ip}A_{pj,kl}+b_{jp}A_{ip,kl}\bigr)
\]
by setting $l=j$ and summing $j$. The right-product identity follows by the same expansion on the second pair, using antisymmetry and $b^T=-b$. It is valid without assuming $A=A^T$.
Now
\[
 \R T_b=\tfrac12\bigl((\R A)^Tb+\mathcal S_{A^T}(b)
                     -b\R A-\mathcal S_A(b)\bigr).
\]
The defining completeness relation also gives
$\sum_b b^Tb=\sum_bbb^T=(n-1)I_n/2$. Hence
\[
 \sum_b\|b\R A\|_F^2=\tfrac{n-1}{2}\|\R A\|_F^2,
 \quad
 \sum_b\|(\R A)^Tb\|_F^2=\tfrac{n-1}{2}\|\R A\|_F^2.
\]
Using~\eqref{eq:reshuffled-exact-bound}, the elementary inequality
$\|Z_1+\cdots+Z_4\|^2\le4\sum_i\|Z_i\|^2$ proves
\begin{equation}\label{eq:partial-Tb-detailed}
 \sum_b\norm{\R T_b}_F^2\le Cn\norm{\R A}_F^2+C\norm A_{\HS}^2.
\end{equation}
Sum~\eqref{eq:fourth-moment} for $T_b$ and recall the initial scalar subtraction. This proves~\eqref{eq:haar-commutator}.
\end{proof}

\section{Regularized comparison estimates}\label{sec:comparison}
Use throughout
\begin{equation}\label{eq:parameters}
 \ell=\log(en),\qquad R=\lceil n^{5/4}\rceil,\qquad
 n^{-1/100}\le\eta\le1.
\end{equation}
For sufficiently large $n$, $R=o(N)$ and $R\le N$. Constants below are absolute and uniform over $\eta$ in this interval. Parameters $z$ used as resolvent arguments satisfy $\eta\le z\le5$.

The deliberately separated exponents $1/4$ and $1/100$ have different jobs. The suffix produces representation-averaging error $\exp(-n^{1/4}/2)$, whereas approximating a relative-orientation inverse will require polynomial degree at most $Cn^{1/100}\ell^2$. Their coefficient growth is dominated by the suffix error. Removing a suffix perturbs a resolvent in at most $R$ directions, and $R/N=O(n^{-3/4})$ remains small. Finally, all inverse powers in the transport estimates are fixed powers of $\eta^{-1}$; the exponent $1/100$ leaves room for every such loss.

The following is the non-elementary spectral input, stated in the precise normalization used here. Carlen--Carvalho--Loss~\cite[Section 6]{CCL} prove that the group walk has the same gap as the corresponding sphere walk. For one discrete coordinate-plane update with a uniform angle, its $L^2(\pi_n)$ gap is
\begin{equation}\label{eq:classical-gap}
 \gamma_n=\frac{n+2}{2n(n-1)}\ge\frac1{2n}.
\end{equation}
Equivalently, for the one-step operator $\mathsf P$ on mean-zero $L^2(\pi_n)$,
$\|\mathsf P f\|_2\le(1-\gamma_n)\|f\|_2$. The operator is self-adjoint and positive: each coordinate-circle average is an orthogonal projection and $\mathsf P$ is their mean. Positivity makes the spectral-gap bound an operator-norm bound, without a second estimate for negative eigenvalues. The continuous-time normalization $n(\mathsf P-I)$ would have gap $n\gamma_n$; it is not the time convention of this paper. For every finite-dimensional unitary representation $\sigma$,
\begin{equation}\label{eq:rep-gap}
 \norm{\E\sigma(G_R)-\Pi_{\mathrm{inv}}}_{\op}
 \le e^{-R/(2n)}.
\end{equation}
Indeed, one-step averaging is an average of orthogonal circle projections. Its nontrivial irreducible summands occur in the regular representation and satisfy the group-gap bound; independence takes its $R$th power.

More explicitly, a finite-dimensional unitary representation is an orthogonal sum of irreducibles. The matrix coefficients of each irreducible span an invariant subspace of $L^2(\pi_n)$ with that representation's spectrum for convolution by the one-step law. The invariant vectors have eigenvalue one; all remaining eigenvalues lie in $[0,1-\gamma_n]$. The independent product $G_R$ has representation average equal to the $R$th power of the one-step representation average. This proves~\eqref{eq:rep-gap}. If a scalar polynomial is expanded as a sum of matrix coefficients of tensor powers of the defining representation, its expectation error is at most its total coefficient absolute sum times $e^{-R/(2n)}$. The representation dimension does not change the operator bound; dimension enters only through this explicitly counted coefficient sum.

\subsection{Cutoffs and which functions are Haarized}
Let $H_*=C_*\ell$ for a sufficiently large fixed constant. Norm variables such as
$1+\max_{t\le4N}\norm{S_t}$ or $1+\norm X+\norm Y$ for two independent blocks have tails $Cn^2e^{-u/C}$ and deterministic bounds $Cn^2$. The rational observables below contain a fixed number of covariance and resolvent factors, with inverse norm at most $\eta^{-1}\le n^{1/100}$. They, their needed squares, and all norm weights of degree at most eight are bounded by a fixed power, say $n^{100}$ for large $n$. By increasing $C_*$, contributions from the corresponding event $M>H_*$ are less than $n^{-200}$. In particular for the nonnegative observables used here,
\begin{equation}\label{eq:weight}
 \E M^j Z\le H_*^j\E Z+n^{-200},\qquad 0\le j\le8.
\end{equation}
For an $L^2$ estimate apply this to the square before taking a square root. The same rule applies to the scalar errors below, which are deterministically bounded by $2/\eta$. Finitely many sums over at most $4N$ times leave such errors negligible. We retain a common final reserve $n^{-100}$ when needed; it is always absorbed by larger displayed bounds.

For an explicit justification of the cutoff rule, if $Z\le n^b$ and $M\le Cn^2$, then
\[
 \E[M^jZ\one_{\{M>H_*\}}]
 \le C^j n^{2j+b}\PP(M>H_*)
 \le C'n^{2j+b+2}\exp(-H_*/C').
\]
Here $b$ and $j\le8$ are fixed independently of $n$ and $\eta$. In the displayed observables, a resolvent has norm at most $n^{1/100}$, a covariance has norm at most $4N$, a partial trace costs at most $Cn^{3/2}$ times operator norm, and a squared norm introduces only another fixed power. Even with the needed squares and weights, one may use a common bound $n^{100}$ for all large $n$. Choosing $C_*$ after this finite bound makes the preceding expectation smaller than $n^{-200}$. For $L^2$ truncation, apply the same argument to $Z^2$ first, reserving twice the desired exponent. This also applies to $\max_t|s_t-\sigma|$, since it is bounded by $2/\eta$. The cutoff never requires an assumption that a componentwise angle average stays in the same event.

There are two distinct Haar comparisons. If a resolvent is subjected to one common conjugation, then
$(\rho(G)S\rho(G)^*+zI)^{-1}=\rho(G)(S+zI)^{-1}\rho(G)^*$.
Diagonal variance tests are therefore degree at most eight in $G$ even when $z$ is small. Equation~\eqref{eq:rep-gap} applies directly, with a polynomial coefficient bound.

For a \emph{relative} rotation, such as $(\rho(G)X\rho(G)^*+Y+zI)^{-1}$, the dependence on $G$ is not polynomial. We justify its replacement separately.

\begin{lemma}[Haar comparison for the needed rational tests]\label{lem:rational-comparison}
Let $X,Y$ be independent of one or two $R$-step separator rotations, with $X,Y\succeq0$, operator norms at most $4N$, and the norm tails above. In each of the following tests, replacing either independent separator by Haar changes the expectation by at most $n^{-100}$:
\[
 \norm{\R_0 T}_F^2,\quad \norm{\R_0(YT)}_F^2,\quad
 Q(T),\quad Q(YT),\qquad
 Q(A)=\frac1N\sum_a\norm{L_aAa}^2.
\]
Here $S$ is the sum of the two covariances with their stated independent conjugations, $T=(S+zI)^{-1}$, and $Y$ in $YT$ denotes the corresponding conjugated colored covariance. The assertion is uniform in $z\in[\eta,5]$. No cutoff depending on the separator is included in a polynomial test.
\end{lemma}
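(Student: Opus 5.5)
The plan is to condition on everything independent of the separator being replaced and reduce each rational test to a polynomial test of controlled degree and coefficient size. Then~\eqref{eq:rep-gap} applies in the polynomial form stated after it. Treat one separator at a time. If there are two, condition on the other separator (Haar or random) together with $X,Y$; the argument below is uniform in that conditioning, so the two replacements add errors. Write $G$ for the separator being replaced, and $S=\rho(G)X\rho(G)^*+Y$, with the other conjugations absorbed into $X,Y$.

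\textbf{Step 1: a cutoff independent of $G$.} Set $M=1+\norm X+\norm Y$. This norm does not involve $G$, and $\norm{S}\le M-1$ holds for every $G$. On $\{M>H_*\}$, each test is deterministically at most $n^{100}$: resolvent norm at most $\eta^{-1}$, $\norm Y\le 4N$, $\R_0$ costing $Cn^{3/2}$ times the operator norm, and $L_a$ of norm one. So the cutoff rule~\eqref{eq:weight} makes this event contribute below $n^{-200}$ on both the random and the Haar side. This is why no separator-dependent cutoff is needed: on $\{M\le H_*\}$ the spectrum of $S+zI$ lies in $[z,H_*+5]$ for every $G$.

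\textbf{Step 2: polynomial replacement.} Fix $z\in[\eta,5]$. Take the Chebyshev approximant $p$ of $x\mapsto x^{-1}$ on $[z,H_*+5]$ with uniform error $\delta=n^{-400}$. Its degree is
\[
 d\le C\sqrt{H_*/\eta}\,\log n\le Cn^{1/200}\ell^{3/2},
\]
which is within the degree budget $Cn^{1/100}\ell^2$ quoted earlier. Then $\norm{T-p(S+zI)}_{\op}\le\delta$ for every $G$. Each of the four tests is a quadratic form in $T$ with operator-norm Lipschitz constant at most $n^{C}$, using~\eqref{eq:Rbounds} and $\norm Y\le4N$. Hence replacing $T$ by $T_p=p(S+zI)$ changes each test by at most $n^{-300}$, uniformly in $G$ and on both sides.

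\textbf{Step 3: representation averaging, the main obstacle.} After the replacement, each test is a polynomial in the matrix entries of $G$ of degree at most $8d$. Each $\rho(G)$ entry is quadratic in $G$, each power of $S$ carries $\rho(G)\otimes\rho(G)$, and the test is quadratic in $T_p$. The coefficients of this polynomial are determined by $X,Y$ and $z$. Expand it as a sum of matrix coefficients of tensor powers of the defining representation. By the remark after~\eqref{eq:rep-gap}, the expectation error is at most the total absolute coefficient sum times $e^{-R/(2n)}\le\exp(-n^{1/4}/2)$.

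The hard part is bounding that coefficient sum. I would argue as follows.
\begin{itemize}
\item The monomial coefficients of $p$ are at most $(CH_*/z)^{d}\,z^{-1}\le\exp(Cd\ell)$.
\item Each entry of a product of $k\le d$ factors $S+zI$ is a sum of at most $n^{Ck}$ index strings. Each string is a product of entries of $X$, $Y$, and $\rho(G)$, with $|X_{\cdot\cdot}|,|Y_{\cdot\cdot}|\le H_*$ on the event $\{M\le H_*\}$.
\item The outer traces, $\R_0$, and the sum over $a$ add only $n^{C}$ further terms.
\end{itemize}
Together these give a coefficient sum of at most $\exp(Cd\ell)\le\exp(Cn^{1/200}\ell^{5/2})$. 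This is negligible against $\exp(-n^{1/4}/2)$, leaving an averaging error far below $n^{-300}$.

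\textbf{Assembly.} Integrate over $X,Y$ and the other separator, and add the errors from Steps 1--3. The total is below $n^{-100}$, uniformly in $z\in[\eta,5]$.
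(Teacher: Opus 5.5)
Your proposal is correct and follows the paper's proof. Both arguments use a separator-independent cutoff $\norm X+\norm Y\le H_*$, a uniform polynomial approximation of the resolvent on the resulting fixed spectral interval, an expansion into tensor-power matrix coefficients with coefficient sum $\exp(O(\text{degree}\cdot\log n))$, and~\eqref{eq:rep-gap}, applied to one separator at a time. The only difference is that you use a Chebyshev approximant of degree about $\sqrt{H_*/\eta}\,\log n$ instead of the paper's truncated Neumann series $p_J$ of degree about $H_0\eta^{-1}\ell$. This saves a square root in the degree, which the $\exp(-n^{1/4}/2)$ averaging error does not require. In exchange, you need the monomial-coefficient bound for the approximant, which you state without proof; it does hold, via Chebyshev coefficients bounded by $2\sup|p|$, the $(1+\sqrt2)^k$ coefficient growth of $T_k$, and the interval length $\ge H_*$. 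For the paper's $p_J$ these coefficients are explicit.
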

\begin{proof}
First restrict the \emph{coefficient} matrices to $\norm X+\norm Y\le H_*$. This event is independent of the separators and invariant under any of their conjugations. Outside it, the original rational tests and their Haar counterparts contribute less than $n^{-200}$, by~\eqref{eq:weight} and their deterministic polynomial bounds.

On this event $0\preceq S\preceq H_*I$ for every possible separator. With $H_0=2H_*+6$ define
\[
 p_J(S)=H_0^{-1}\sum_{j=0}^J
          \bigl(I-(S+zI)/H_0\bigr)^j,
 \qquad J=\lceil1000H_0\eta^{-1}\ell\rceil.
\]
The geometric remainder gives, uniformly in all separator values,
\[
 \norm{(S+zI)^{-1}-p_J(S)}_{\op}
 \le\eta^{-1}e^{-(J+1)\eta/H_0}\le\eta^{-1}n^{-1000}.
\]
Replacing $T$ by $p_J(S)$, and $YT$ by $Yp_J(S)$, changes any displayed test by less than $n^{-500}$; use~\eqref{eq:Rbounds}, the at-most-polynomial dimension factors, and $\norm Y\le H_*$.

For example, if $T'$ is an approximant with $\|T-T'\|_{\op}\le e_J$, then
\[
 \big|\|\R_0T\|_F^2-\|\R_0T'\|_F^2\big|
 \le C n^3 e_J(2\eta^{-1}+e_J).
\]
For $YT$, multiply the right side by $\|Y\|^2$. For the test $Q$, the seminorm $Q(A)^{1/2}$ is at most $\|A\|_{\op}$, because $\|L_a\|\le1$ and the coordinate vectors have norm one. Its squared difference is therefore bounded by
$e_J(2\eta^{-1}+e_J)$, again with a factor $\|Y\|^2$ in the mixed case. Equation~\eqref{eq:Rbounds} and $H_*=O(\ell)$ make all these errors smaller than $n^{-500}$ with the stated choice of $J$.

An entry of $\rho(U)$ is a two-by-two minor of $U$. Expanding the polynomial tests gives degree $O(J)$ in the one or two separators and total coefficient absolute sum at most $\exp(CJ\log n)$. This bound follows by expanding each matrix product into its $N\le n^2$-valued indices; the coefficients of $p_J$ and the $O(J)$ factors contribute at most an additional $\exp(CJ\log H_*)$, which is included.

One can check the count entry by entry. An entry of $\rho(G)X\rho(G)^*$ is a sum over two Lie-algebra indices, each with at most $n^2$ values; each $\rho(G)$ entry is a sum of two degree-two monomials. A product of $J$ such matrices has $J-1$ additional summed Lie-algebra indices. Traces and partial traces add only a fixed number of summed indices, and a squared test doubles the number of factors. Thus both the number of choices and the coefficient mass are bounded by $(C n^c)^{C(J+1)}$ for absolute exponents. Coefficients containing $z\in[\eta,5]$ and $H_0^{-1}$ are bounded by a fixed power of $H_0$ per factor, also covered by this envelope. In particular the logarithm grows linearly, not quadratically, in $J\log n$.
 Each monomial is a matrix coefficient of a tensor power of the defining representation. By~\eqref{eq:rep-gap}, the expectation changes by at most
\[
 \exp(CJ\log n-R/(2n)).
\]
Here $J\le C n^{1/100}\ell^2$, whereas $R/n\ge n^{1/4}$. The last display is less than $n^{-500}$ for all sufficiently large $n$, uniformly in the parameter range. Replace two separators successively. Integrating the conditional estimate over the good coefficient event and restoring its complement proves the assertion. Polynomial approximants on the bad coefficient event are never used.
\end{proof}

\subsection{Diagonal homogenization of a resolvent}
For any operator $A$, let
\[
 V(A)=\frac1N\sum_a|\ip a{Aa}-\tau(A)|^2.
\]
\begin{lemma}\label{lem:diag-resolvent}
For a total colored covariance $S_t$, $R\le t\le4N$, and deterministic $z\in[\eta,5]$,
\begin{equation}\label{eq:diag-resolvent}
 \E V((S_t+zI)^{-1})\le C n^{-3/4}\eta^{-2}.
\end{equation}
With a norm weight $M^j$, $j\le8$, the right side may be multiplied by $C_j\ell^j$.
\end{lemma}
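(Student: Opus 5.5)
We split $S_t$ at time $t-R$ into a prefix and a suffix. The suffix is then treated as a common conjugation of the prefix plus a low-rank remainder, and the conjugation is Haarized by the representation gap.

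Write $G_R=R_t\cdots R_{t-R+1}$ for the product of the last $R$ rotations. Iterating \eqref{eq:colored} gives
\[
 S_t=\rho(G_R)S_{t-R}\rho(G_R)^*+C_R,
\]
where $C_R$ collects the at most $R$ rank-one projections added during the suffix. So $C_R\succeq0$ and $\rank C_R\le R$.

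\textbf{Step 1: remove the low-rank suffix.} Put $T=(S_t+zI)^{-1}$ and $T'=\rho(G_R)(S_{t-R}+zI)^{-1}\rho(G_R)^*$. By \eqref{eq:low-rank-elementary},
\[
 \|T-T'\|_{\HS}^2\le Rz^{-2}\le Rn^{0}\eta^{-2}.
\]
$V$ is a diagonal variance, so $V(A)^{1/2}$ is a seminorm bounded by $\bigl(N^{-1}\sum_a|\ip a{Aa}|^2\bigr)^{1/2}+|\tau(A)|$. Both pieces are at most $N^{-1/2}\|A\|_{\HS}$. This gives
\[
 V(T)\le 2V(T')+4N^{-1}\|T-T'\|_{\HS}^2\le 2V(T')+CRN^{-1}\eta^{-2}.
\]
With $R=\lceil n^{5/4}\rceil$ and $N\asymp n^2$, the last term is $Cn^{-3/4}\eta^{-2}$, which is the target rate. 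This is where the choice of $R$ enters.

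\textbf{Step 2: Haarize the common conjugation.} $G_R$ is independent of $S_{t-R}$; the schedule is deterministic, and the suffix uses fresh pairs and angles. Condition on the prefix. Then $V(T')$ is a polynomial of degree four in the entries of $\rho(G_R)$ with coefficients from $A_0=(S_{t-R}+zI)^{-1}$, whose norm is at most $\eta^{-1}$; no polynomial approximation of the inverse is needed. Expanding over the $N$ diagonal tests and the indices gives coefficient absolute sum at most $n^{C}\eta^{-2}$. By \eqref{eq:rep-gap}, replacing $G_R$ by a Haar $U$ costs at most
\[
 n^{C}\eta^{-2}e^{-R/(2n)}\le n^{-100}.
\]
For a Haar $U$, the vector $\rho(U)^*a$ is a Haar-uniform unit simple bivector $w$ for every coordinate $a$. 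Then \eqref{eq:haar-variance} gives
\[
 \E|\ip w{A_0w}-\tau A_0|^2\le Cn^{-1}\|A_0\|_{\op}^2\le Cn^{-1}\eta^{-2}.
\]
Averaging over $a$ keeps this bound, and $n^{-1}\le n^{-3/4}$. Combining the two steps gives \eqref{eq:diag-resolvent}.

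\textbf{Step 3: the norm weight.} When $M^j$ multiplies the integrand, apply the cutoff rule \eqref{eq:weight} first. Then $M\le H_*=C_*\ell$ and the weight costs $C_j\ell^j$, up to $n^{-200}$. We do not restrict the suffix before Haarizing, since the truncation event depends on it. Instead, bound $\E[M^jV(T)]$ by $H_*^j\E V(T)+n^{-200}$. This only uses that $V(T)\le 4\eta^{-2}$ deterministically.

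\textbf{Main obstacle.} The delicate point is the Step 2 conditioning. After conditioning on the prefix, $V(T')$ must be a genuine polynomial test in $G_R$ alone, with a coefficient sum that is polynomial in $n$ and independent of the degree of an inverse approximation. This holds only because the resolvent of a common conjugation equals the conjugated resolvent. The Step 1 seminorm comparison also has to lose only $N^{-1}\|\cdot\|_{\HS}^2$, not an operator-norm factor.
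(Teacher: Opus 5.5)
Your proposal is correct and is essentially the paper's proof. You split off the last $R$ rotations and use $V(E)\le N^{-1}\|E\|_{\HS}^2$ on the rank-$\le R$ resolvent difference to get the $R/N$ term. You then Haarize the common conjugation of the old resolvent, which is a fixed-degree polynomial test, so no inverse approximation is needed, by the representation gap. Finally you apply the Haar simple-bivector variance bound and insert the weights with the cutoff rule \eqref{eq:weight}. The only blemish is a constant: combining your bound $V(A)\le 4N^{-1}\|A\|_{\HS}^2$ with $(x+y)^2\le 2x^2+2y^2$ gives $8$ rather than $4$ (or $2$ with the sharper $V(A)\le N^{-1}\|A\|_{\HS}^2$), which is harmless.
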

\begin{proof}
Split the last $R$ actual rotations:
$S_t=\rho(G)S_{t-R}\rho(G)^*+B$, where $B\succeq0$ and $\rank B\le R$.
Put $T=(S_t+zI)^{-1}$ and
$T^{\mathrm o}=\rho(G)(S_{t-R}+zI)^{-1}\rho(G)^*$.
Then $0\preceq T^{\mathrm o}-T\preceq z^{-1}I$ and $\rank(T^{\mathrm o}-T)\le R$. Thus
\[
 \norm{T-T^{\mathrm o}}_{\HS}^2\le R\eta^{-2}.
\]
Conditional on the old covariance, $V(T^{\mathrm o})$ is a fixed-degree test in the fresh $G$. Common-conjugation Haar comparison and~\eqref{eq:haar-variance} bound its expectation by $C/(n\eta^2)+n^{-200}$. Since $V(E)\le\norm E_{\HS}^2/N$,
\[
 \E V(T)\le C\eta^{-2}(n^{-1}+R/N)+n^{-200}
 \le C n^{-3/4}\eta^{-2}.
\]
The weights follow from~\eqref{eq:weight}; their complement is negligible. The recent covariance and $G$ need not be independent.
\end{proof}

\subsection{A scalar stability inequality with cumulative noise}
\begin{lemma}[Pathwise stability and a martingale forcing bound]\label{lem:scalar-Gronwall}
Suppose $e_0=0$ and
\[
 e_{t+1}=(1+A_t)e_t+b_t+\xi_t,
 \qquad 0\le A_t\le\alpha_t,
\]
where the $\alpha_t$ are deterministic nonnegative numbers. Define
$F_u=\sum_{t<u}(b_t+\xi_t)$. Then
\begin{equation}\label{eq:Gronwall-pathwise}
 \max_{u\le m}|e_u|
 \le\left(\prod_{t<m}(1+\alpha_t)\right)\max_{u\le m}|F_u|.
\end{equation}
If $\xi_t$ is a square-integrable martingale difference, then
\begin{equation}\label{eq:Gronwall-L2}
 \left\|\max_{u\le m}|e_u|\right\|_2
 \le\left(\prod_{t<m}(1+\alpha_t)\right)
 \left[\sum_{t<m}\|b_t\|_2+
 2\left(\sum_{t<m}\E\xi_t^2\right)^{1/2}\right].
\end{equation}
No independence between $A_t$ and the preceding noises is required.
\end{lemma}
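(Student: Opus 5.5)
The plan is a discrete variation-of-constants representation followed by Abel summation. Set $\Phi_u=\prod_{t<u}(1+A_t)$, so that $1\le\Phi_u$ and $\Phi_u$ is nondecreasing in $u$, with $\Phi_u\le\Pi_m:=\prod_{t<m}(1+\alpha_t)$ for $u\le m$. Writing $f_t=b_t+\xi_t=F_{t+1}-F_t$ and unrolling the recursion from $e_0=0$ gives
\[
 e_u=\sum_{t<u}\frac{\Phi_u}{\Phi_{t+1}}\,f_t .
\]
The coefficients $c_t=\Phi_u/\Phi_{t+1}$ depend on the realized $A_t$, which may depend on past noise. For that reason I will not take expectations of this sum directly. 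Instead I will use summation by parts, which works pathwise and needs no independence.

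\textbf{Step 1: the pathwise bound.} With $F_0=0$, Abel summation gives
\[
 e_u=\sum_{t<u}c_t(F_{t+1}-F_t)=c_{u-1}F_u+\sum_{t=1}^{u-1}(c_{t-1}-c_t)F_t .
\]
The coefficients $c_t$ increase in $t$, because $c_{t-1}=c_t/(1+A_t)\le c_t$. Also $c_{u-1}=1$ and $c_0=\Phi_u/\Phi_1\le\Phi_u$. Every weight $c_t-c_{t-1}$ is therefore nonnegative, and the weights telescope. This yields
\[
 |e_u|\le\Bigl(1+\sum_{t=1}^{u-1}(c_t-c_{t-1})\Bigr)\max_{s\le u}|F_s|=(2-c_0)\max_{s\le u}|F_s| .
\]
The factor $2-c_0$ is not quite $\Pi_m$, so I will redo the summation in the opposite orientation to obtain the stated factor. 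The cleaner route is direct induction with an explicit representation. Write $e_u=F_u+\sum_{t<u}A_te_t$ and prove by induction that $\max_{s\le u}|e_s|\le\Phi_u\max_{s\le u}|F_s|$. Set $\mu=\max_{s\le m}|F_s|$ and suppose $|e_t|\le\Phi_t\mu$ for all $t<u$. Then
\[
 |e_u|\le\mu+\sum_{t<u}A_t\Phi_t\mu=\mu\Bigl(1+\sum_{t<u}(\Phi_{t+1}-\Phi_t)\Bigr)=\Phi_u\mu .
\]
This telescopes exactly. It gives \eqref{eq:Gronwall-pathwise}, using $\Phi_u\le\Pi_m$ and $A_t\le\alpha_t$. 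I will adopt this induction as the proof, since it avoids sign issues.

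\textbf{Step 2: the $L^2$ bound.} Apply \eqref{eq:Gronwall-pathwise}; the prefactor is deterministic. Then split $F_u=\sum_{t<u}b_t+M_u$, where $M_u=\sum_{t<u}\xi_t$ is a martingale. This gives
\[
 \max_u|F_u|\le\sum_{t<m}|b_t|+\max_{u\le m}|M_u| .
\]
Minkowski bounds the first term in $L^2$ by $\sum_t\|b_t\|_2$. For the second term, Doob's $L^2$ maximal inequality together with orthogonality of the martingale increments gives
\[
 \bigl\|\max_u|M_u|\bigr\|_2\le2\|M_m\|_2=2\Bigl(\sum_{t<m}\E\xi_t^2\Bigr)^{1/2} .
\]
Combining these proves \eqref{eq:Gronwall-L2}.

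The only subtle point is Step 1. The random multipliers $A_t$ are correlated with the noise, so the argument must be pathwise and use only the deterministic envelope $\alpha_t$. Once the induction is set up with $\Phi_t$, it telescopes exactly, and no independence is used anywhere.
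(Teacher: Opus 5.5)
Your adopted argument (the induction on $e_u=F_u+\sum_{t<u}A_te_t$ with $\sum_{t<u}A_t\Phi_t=\Phi_u-1$ and $\Phi_u\le\prod_{t<m}(1+\alpha_t)$, then Minkowski for the drift and Doob's $L^2$ maximal inequality for $M_u$) is correct and is essentially the paper's proof, which runs the same telescoping induction directly with the deterministic $\alpha_t$. The discarded Abel-summation detour has a sign slip: since $c_{t-1}=c_t(1+A_t)\ge c_t$, the weights $c_{t-1}-c_t$ are nonnegative and together with $c_{u-1}$ sum to $c_0$, so that route actually gives $|e_u|\le c_0\max_{s\le u}|F_s|\le\Phi_u\max_{s\le u}|F_s|$ and should be corrected or deleted rather than left with the spurious factor $2-c_0$.
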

\begin{proof}
Summing the recurrence gives
$e_u=F_u+\sum_{t<u}A_te_t$. Fix one sample path and put
$f=\max_{u\le m}|F_u|$. Induction in $u$ yields
\[
 |e_u|\le f\prod_{t<u}(1+\alpha_t),
\]
because $1+\sum_{t<u}\alpha_t\prod_{j<t}(1+\alpha_j)=\prod_{t<u}(1+\alpha_t)$. This proves~\eqref{eq:Gronwall-pathwise}. The martingale $M_u=\sum_{t<u}\xi_t$ has
$\E M_m^2=\sum_t\E\xi_t^2$, by conditional centering. Its $L^2$ maximal inequality is
$\|\max_u|M_u|\|_2\le2\|M_m\|_2$. One proof applies the stopping-time bound
$\lambda\PP(M^*>\lambda)\le\E[|M_m|\one_{\{M^*>\lambda\}}]$
to the nonnegative submartingale $|M_u|$, integrates in $\lambda$, and uses Cauchy--Schwarz; it gives $\E(M^*)^2\le2\|M_m\|_2\|M^*\|_2$. The stopping-time bound follows by stopping at the first crossing and conditioning $|M_m|$ on that time. Adding the cumulative drift by Minkowski proves~\eqref{eq:Gronwall-L2}.
\end{proof}

\section{Following a deterministic characteristic}\label{sec:characteristic}
Fix a deterministic colored schedule of length $m\le4N$. Let $v_t=\sum_{j<t}\varepsilon_j$ be the number of retained increments by time $t$ and set $c=v_m/N\in[0,4]$. Fix the final regularization $\eta$ in~\eqref{eq:parameters}. Let $z_0>0$ be the positive solution of
\begin{equation}\label{eq:z0}
 \eta=z_0-\frac{c}{1+z_0^{-1}},\qquad
 z_0=\frac{c+\eta-1+\sqrt{(c+\eta-1)^2+4\eta}}2.
\end{equation}
Put
\begin{equation}\label{eq:path}
 \sigma=z_0^{-1},\quad d_0=\frac1{N(1+\sigma)},\quad
 z_t=z_0-d_0v_t,\quad T_t=(S_t+z_tI)^{-1},\quad s_t=\tau(T_t).
\end{equation}
Then $z_m=\eta$, $\eta\le z_t\le z_0\le5$, and $d_0\le1/N$. The path $z_t$ is deterministic, so the preceding conditional Haar comparisons remain valid. It stays fixed at inactive steps.

To see how the path was chosen, the predicted trace of the resolvent would remain equal to $\sigma$ if one retained rank-one increment were offset by decreasing $z$ by $1/[N(1+\sigma)]$. Start from $S_0=0$, where $s_0=1/z_0=\sigma$, and require that the final argument be $\eta$. This gives exactly~\eqref{eq:z0}. Multiplying that equation by $z_0+1$ yields
\[
 z_0^2+(1-c-\eta)z_0-\eta=0.
\]
Its constant term is negative, so it has a unique positive root. The relation
$z_0-\eta=c z_0/(z_0+1)$ shows $z_0\ge\eta$ and $z_0\le c+\eta\le5$. For an inactive step, neither the retained count nor $z_t$ changes, and conjugation does not change $s_t$.

The path need not follow the random trace exactly. Instead the discrepancy between its deterministic velocity and the random inverse drift is proportional to $s_t-\sigma$. This is the cancellation in~\eqref{eq:error-rec}; the proof below controls that discrepancy without an adapted regularization parameter.

\begin{proposition}[Scalar stability]\label{prop:scalar}
Uniformly over these schedules,
\begin{equation}\label{eq:scalar-stability}
 \left\|\max_{t\le m}|s_t-\sigma|\right\|_{L^2}
 \le C n^{-3/8}\eta^{-4}.
\end{equation}
In particular, for an uncolored $2N$-step core,
\begin{equation}\label{eq:scalar-final}
 \E\tau(D_{2N}+\eta I)^{-1}\le2
\end{equation}
for sufficiently large $n$, uniformly in~\eqref{eq:parameters}.
\end{proposition}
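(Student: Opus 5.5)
We will treat $e_t=s_t-\sigma$ as a scalar error following the deterministic characteristic $z_t$. We will write one active step as a recursion of the form in Lemma~\ref{lem:scalar-Gronwall}, then combine the pathwise stability bound with the diagonal homogenization Lemma~\ref{lem:diag-resolvent}. Inactive steps only conjugate $S_t$ and leave $z_t$ fixed, so $e_{t+1}=e_t$ there. Fix an active step $t$ with fresh generator $a$.

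\textbf{Step 1: the one-step recursion.} Conjugation by $O_a$ does not change the trace. By~\eqref{eq:SM-elementary},
\[
 \tau\bigl((S_t+P_a+z_tI)^{-1}\bigr)=s_t-\frac1N\,\frac{\ip a{T_t^2a}}{1+\ip a{T_ta}}.
\]
The shift $z_t\mapsto z_t-d_0$ is handled by~\eqref{eq:shift-Taylor-elementary}. It adds $d_0\tau(T'^2)$, where $T'$ is the post-increment resolvent, plus an error of size $d_0^2\eta^{-3}\le N^{-2}\eta^{-3}$. Replacing $\tau(T'^2)$ by $\tau(T_t^2)$ costs $O(N^{-2}\eta^{-3})$, by the rank-one bound~\eqref{eq:low-rank-elementary}. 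Next we replace $\ip a{T_ta}$ by $s_t$ and $\ip a{T_t^2a}$ by its $a$-average $\tau(T_t^2)$. This leaves the main drift
\[
 \frac{\tau(T_t^2)}{N}\Bigl(\frac1{1+\sigma}-\frac1{1+s_t}\Bigr)
 =\frac{\tau(T_t^2)}{N(1+\sigma)(1+s_t)}\,e_t .
\]
This is exactly the cancellation built into $d_0=1/(N(1+\sigma))$. It has the form $A_te_t$ with $A_t\ge0$. Since $\tau(T_t^2)\le s_t/z_t$, we get $A_t\le d_0\,\tfrac{s_t}{1+s_t}z_t^{-1}\le \alpha_t:=d_0/z_t$, which is deterministic. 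The product $\prod_t(1+\alpha_t)$ over active steps is bounded by $\exp\sum d_0/z_t$, a Riemann sum for $\int_\eta^{z_0}dz/z$. Because $z_t\ge\eta$ and $d_0\le 1/N\le\eta$ for large $n$, this gives
\[
 \prod_t(1+\alpha_t)\le C z_0/\eta\le C\eta^{-1}.
\]
Keeping this telescoping factor, rather than an exponential of $m\max\alpha_t$, is the point.

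\textbf{Step 2: noise and bias.} We define $\xi_t$ as the Sherman--Morrison term minus its conditional expectation over $a$ given $\mathcal F_t$. It is bounded by $N^{-1}\eta^{-2}$, so $\sum_t\E\xi_t^2\le CN^{-1}\eta^{-4}$. Its contribution is $O(n^{-1}\eta^{-2})$, which is negligible. The bias $b_t$ consists of the Taylor terms, of total size $O(N^{-1}\eta^{-3})$, and the homogenization error. The latter is
\[
 \frac1N\,\E_a\Bigl[\ip a{T_t^2a}\Bigl(\frac1{1+s_t}-\frac1{1+\ip a{T_ta}}\Bigr)\Bigr]
 \le \frac{C}{N\eta^{2}}\,\bigl(V(T_t)\bigr)^{1/2},
\]
by Cauchy--Schwarz over $a$ and $|\ip a{T_ta}-s_t|\le|\,\cdot\,|$. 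The correlation between $\ip a{T_t^2a}$ and the diagonal fluctuation of $T_t$ only enters through this product bound. For $t\ge R$, Lemma~\ref{lem:diag-resolvent} gives $\|V(T_t)^{1/2}\|_{L^2}\le Cn^{-3/8}\eta^{-1}$. For $t<R$, $S_t$ has rank at most $R$, so~\eqref{eq:low-rank-elementary} gives $V(T_t)\le\|T_t-z_t^{-1}I\|_{\HS}^2/N\le R\eta^{-2}/N$, which is the same order. Summing over at most $4N$ steps gives $\sum_t\|b_t\|_2\le Cn^{-3/8}\eta^{-3}$. Then~\eqref{eq:Gronwall-L2}, multiplied by the $C\eta^{-1}$ stability factor, yields~\eqref{eq:scalar-stability}.

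\textbf{Step 3: the consequence and the main obstacle.} For an uncolored $2N$-step core we take $c=2$. Then $z_0$ solves $z_0^2-(1+\eta)z_0-\eta=0$, and since the left side is $-2\eta<0$ at $z_0=1$, we have $z_0>1$ and $\sigma<1$. Also $s_m=\tau(D_{2N}+\eta I)^{-1}$, so
\[
 \E s_m\le\sigma+Cn^{-3/8}\eta^{-4}\le 1+Cn^{-3/8+4/100}\le 2
\]
for large $n$.

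The main obstacle is to make Step 1 rigorous. One must check that the deterministic bound $\alpha_t$ truly dominates $A_t$ even though $s_t$ is random. One must also verify that every term not of the form $A_te_t$ is placed into $b_t$ or $\xi_t$ with the claimed sizes, in particular the $T$ versus $T'$ discrepancy in the shift term. Finally, the Riemann-sum bound for $\prod(1+\alpha_t)$ must be justified uniformly in the schedule.
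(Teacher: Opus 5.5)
Your proposal is correct and follows the paper's proof essentially step for step: the same recursion with $A_t=d_0\tau(T_t^2)/(1+s_t)\le d_0/z_t$, the same stability factor $z_0/\eta$, Lemma~\ref{lem:diag-resolvent} for the denominator bias, and Lemma~\ref{lem:scalar-Gronwall} to conclude. Your variants (Sherman--Morrison before the parameter shift, and the low-rank bound on $V(T_t)$ for $t<R$) are harmless, and the ``obstacles'' you list are already settled by your own steps; for instance, $d_0/z_t\le\log(z_t/z_{t+1})$ gives the product bound $z_0/\eta$ exactly.
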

\begin{proof}
At an active step, pull back the common rotation and put $T=T_t$, $z=z_t$, $d=d_0$. The exact update is
\begin{equation}\label{eq:exact-resolvent}
 T_{t+1}^{\mathrm{pb}}
 =\widehat T-\frac{(\widehat T a)(\widehat T a)^*}
                       {1+\ip a{\widehat T a}},\qquad
 \widehat T=(S_t+(z-d)I)^{-1}.
\end{equation}
The full interval from $z-d$ to $z$ lies above $\eta$. Taylor expansion of the first term gives
$\widehat T=T+dT^2+E$ with $\norm E_{\op}\le d^2\eta^{-3}$.

For the rank-one term write
\[
 t_a=T_za,\quad x=\langle a,T_za\rangle,\quad y=\|t_a\|^2,
 \qquad Q_z=\frac{t_at_a^*}{1+x}.
\]
Since $\partial_z t_a=-T_zt_a$ and $\partial_zx=-y$,
\begin{equation}\label{eq:rank-derivative-expanded}
 \partial_zQ_z
 =-\frac{(T_zt_a)t_a^*+t_a(T_zt_a)^*}{1+x}
   +\frac{y\,t_at_a^*}{(1+x)^2}.
\end{equation}
For a rank-one operator $uv^*$ the nuclear norm is $\|u\|\|v\|$. Also
$\|T_zt_a\|\le\eta^{-1}\|t_a\|$ and $y\le x/\eta$. Thus
\[
 \|\partial_zQ_z\|_{S_1}
 \le\frac{2\eta^{-1}y}{1+x}+\frac{y^2}{(1+x)^2}
 \le3\eta^{-2}.
\]
Integrating over the argument interval of length $d$ bounds the rank-one shift error in nuclear norm by $3d\eta^{-2}$. The normalized trace of this error is at most $3d/(N\eta^2)$, while the full-matrix Taylor remainder has normalized trace at most $d^2\eta^{-3}$. Since $d\le1/N$ and $\eta\le1$, their sum is at most $C/(N^2\eta^3)$.

Consequently, with $x_a=\ip a{Ta}$ and $y_a=\ip a{T^2a}$,
\begin{equation}\label{eq:trace-increment}
 \Delta s_t=d\tau(T^2)-\frac1N\frac{y_a}{1+x_a}+\epsilon_{t,a},
 \qquad |\epsilon_{t,a}|\le\frac C{N^2\eta^3}.
\end{equation}
Write $s=s_t$ and $u=\tau(T^2)$. The denominator error obeys
\[
 \left|\frac1N\sum_a\frac{y_a}{1+x_a}-\frac u{1+s}\right|
 \le\eta^{-2}\sqrt{V(T)}.
\]
Indeed the function $x\mapsto(1+x)^{-1}$ is one-Lipschitz on $[0,\infty)$ and $y_a\le\eta^{-2}$.

More explicitly, subtract the two expressions before estimating:
\[
 \frac1N\sum_a\frac{y_a}{1+x_a}-\frac{u}{1+s}
 =\frac1N\sum_a y_a\left(\frac1{1+x_a}-\frac1{1+s}\right),
\]
since $N^{-1}\sum_a y_a=\tau(T^2)=u$. Its absolute value is at most
$\eta^{-2}N^{-1}\sum_a|x_a-s|\le\eta^{-2}\sqrt{V(T)}$.
Consequently no homogenization of the coordinate diagonals of $T^2$ is needed; the argument uses only $V(T)$.
 With $e_t=s_t-\sigma$, conditional averaging gives
\begin{equation}\label{eq:error-rec}
 e_{t+1}=e_t+A_te_t+b_t+\xi_t,\qquad
 A_t=\frac{d_0\tau(T_t^2)}{1+s_t},\qquad
 \E(\xi_t\mid\mathcal F_t)=0.
\end{equation}
At inactive steps every term except $e_t$ is zero.

Here the definitions are exact: $\xi_t=\Delta s_t-\E(\Delta s_t\mid\mathcal F_t)$, and
$b_t=\E(\Delta s_t\mid\mathcal F_t)-A_t(s_t-\sigma)$. Thus the Taylor remainders and denominator error are included in $b_t$; they are not silently dropped from the recurrence. The number $A_t$ is $\mathcal F_t$-measurable and nonnegative.
 The drift cancellation used here is exactly
\[
 d_0-\frac1{N(1+s_t)}=\frac{d_0(s_t-\sigma)}{1+s_t}.
\]
For active $t\ge R$, Lemma~\ref{lem:diag-resolvent} gives
\[
 \norm{b_t}_2\le C N^{-1}n^{-3/8}\eta^{-3}
                  +C N^{-2}\eta^{-3}.
\]
For $t<R$, the crude bound is $C/(N\eta^3)$. Hence
\begin{equation}\label{eq:b-sum}
 \sum_{t<m}\norm{b_t}_2\le C n^{-3/8}\eta^{-3},
\end{equation}
since $R/N=O(n^{-3/4})$.

The exact trace increment in~\eqref{eq:exact-resolvent} has absolute value at most $C/(N\eta^2)$. The change of parameter contributes at most $d_0\eta^{-2}$ and the rank-one subtraction at most $1/(N\eta)$. Thus Doob's inequality bounds the running martingale sum of $\xi_t$ in $L^2$ by $C/(n\eta^2)$.

Most importantly,
\begin{equation}\label{eq:stability-coefficient}
 0\le A_t\le\frac{d_0}{z_t},
\end{equation}
because $\tau(T_t^2)\le s_t/z_t$. At an active step,
\[
 1+d_0/z_t\le z_t/(z_t-d_0)=z_t/z_{t+1}.
\]

Set $\alpha_t=\varepsilon_t d_0/z_t$. The deterministic product is bounded by
\begin{equation}\label{eq:telescoping-stability-product}
 \prod_{t<m}(1+\alpha_t)
 \le\prod_{t:\varepsilon_t=1}\frac{z_t}{z_{t+1}}
 =\frac{z_0}{\eta}\le\frac5\eta.
\end{equation}
Lemma~\ref{lem:scalar-Gronwall}, applied to the exact error recurrence, therefore gives
\[
 \norm{\max_{t\le m}|e_t|}_2
 \le\frac5\eta\left(\frac C{n\eta^2}
                       +\sum_{t<m}\norm{b_t}_2\right)
 \le C n^{-3/8}\eta^{-4}.
\]
This proves~\eqref{eq:scalar-stability}. For $c=2$, equation~\eqref{eq:z0} has $z_0\ge1$, so $\sigma\le1$. Also $n^{-3/8}\eta^{-4}\le n^{-3/8+4/100}\to0$, uniformly in the allowed range. This gives~\eqref{eq:scalar-final}.

The statement for a core is consequently uniform over all allowed choices of the terminal argument, not merely true along one subsequence. Notice also the direction of the implication: we deduce a bounded average regularized inverse, not an operator-norm bound on that inverse. A small number of exceptionally small covariance eigenvalues is not excluded at this stage.

\end{proof}

\section{Two matrix-valued resolvents, without a degree hierarchy}\label{sec:matrix}
Continue to use the deterministic path~\eqref{eq:path}, and set
\[
 U_t=Y_tT_t,\qquad A_t^{\mathrm R}=\R_0T_t,\qquad B_t^{\mathrm R}=\R_0U_t.
\]
The mixed resolvent $U_t$ need not be symmetric. This is why the nonsymmetric statements in Lemma~\ref{lem:haar} are needed.

\begin{proposition}\label{prop:partial-resolvent}
Uniformly for every deterministic schedule of length at most $4N$ and every allowed terminal regularization $\eta$,
\begin{equation}\label{eq:partial-resolvent}
 \sup_{t\le m}\norm{\R_0T_t}_{L^2(F)}
 \le Cn^{5/4}\eta^{-6},\qquad
 \sup_{t\le m}\norm{\R_0(Y_tT_t)}_{L^2(F)}
 \le Cn^{5/4}\ell\eta^{-6}.
\end{equation}
In particular, for a colored covariance at any terminal time and fixed argument $\eta$, both squared partial traces have expectation at most
$Cn^{5/2}\ell^2\eta^{-12}$.
\end{proposition}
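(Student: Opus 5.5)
We track the two partial traces along the same deterministic path $z_t$ used in Proposition~\ref{prop:scalar}, rather than reaching them by any a~priori operator bound. The quantities are $A_t^{\mathrm R}=\R_0T_t$ and $B_t^{\mathrm R}=\R_0U_t$ with $U_t=Y_tT_t$. The partial trace is equivariant, so $\R_0(\rho(G)A\rho(G)^*)$ is an orthogonal conjugation of $\R_0A$ on $n\times n$ matrices (Frobenius norm preserved). Hence both processes have the form $Z_{t+1}=\mathcal O_t(Z_t+b_t+\xi_t)$, with $\mathcal O_t$ the action of the fresh rotation, and Lemma~\ref{lem:rotated-energy} applies once we identify a drift $b_t$ and a centered increment $\xi_t$ at each active step.

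For the increment of $T$ we use~\eqref{eq:exact-resolvent} together with the Taylor expansion $\widehat T=T+d_0T^2+E$. The partial trace of the increment is
\[
 d_0\R_0(T^2)-\R_0\!\left(\frac{t_at_a^*}{1+x_a}\right)
\]
plus errors. The errors come from the remainder $E$ (size $Cn^{3/2}d_0^2\eta^{-3}$ via~\eqref{eq:Rbounds}) and from the nuclear-norm rank-one shift bound, which costs $Cd_0\eta^{-2}\sqrt n$ after converting $S_1$ to $\HS$; summed over $4N$ steps both are $O(n^{1/2}\eta^{-3})$.

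Averaging over $a$ and replacing $1+x_a$ by $1+s_t$ gives a conditional mean $\frac{1}{N}\R_0(T^2)\bigl(d_0N-\frac1{1+s_t}\bigr)=\frac{d_0(s_t-\sigma)}{1+s_t}\R_0(T^2)$. This is the same drift cancellation as in the scalar case, and no new inverse power is generated except multiplied by the scalar error. The replacement error is controlled by $\eta^{-2}\sqrt{V(T)}$ times $\|\R_0(t_at_a^*)\|_F\le 2\eta^{-2}$, using $\|\R_0(uv^*)\|_F\le2\|u\|\|v\|$. The remaining drift is therefore bounded in $L^2$ by
\[
 N^{-1}\bigl(\|\max|s_t-\sigma|\|_2\,\|\R_0T^2\|_{L^\infty}+\eta^{-4}\|\sqrt{V(T)}\|_2\bigr)
\]
per step. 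Note that $\|\R_0(T^2)\|_F\le Cn^{3/2}\eta^{-2}$ deterministically. Combined with Proposition~\ref{prop:scalar} and Lemma~\ref{lem:diag-resolvent}, summing over $4N$ steps gives drift total $Cn^{3/2}n^{-3/8}\eta^{-6}+Cn^{3/2}n^{-3/8}\eta^{-5}$. The early steps $t<R$ use crude bounds, $RN^{-1}n^{3/2}\eta^{-2}$. The martingale part has increments of Frobenius size $\le CN^{-1}\cdot$ (rank-one contribution $\le 2\eta^{-2}$ in typical direction) plus $d_0n^{3/2}\eta^{-2}$ that is predictable, so $\sum\E\|\xi_t\|^2\le CN\cdot\eta^{-4}$, i.e.\ $O(n\eta^{-2})$. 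Altogether this gives at most $Cn^{9/8}\eta^{-6}\le Cn^{5/4}\eta^{-6}$.

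For $U=YT$ the update is $U_{t+1}^{\mathrm{pb}}=(Y+\chi_tP_a)\widehat T_{a}$, expanded with the rank-one formula. The new terms are $\chi_t P_a\widehat T_a=\chi_t a(T a)^*/(1+x_a)$ and $-Y t_at_a^*/(1+x_a)$ together with $d_0YT^2$. Their conditional averages give
\[
 \frac{d_0(s_t-\sigma)}{1+s_t}\R_0(YT^2)+\frac{\chi_t}{N(1+s_t)}\R_0(T)+\text{(denominator errors)}.
\]
The $\R_0(T)$ term is where the coupling to the first estimate enters. It is controlled by the already proved bound on $A^{\mathrm R}$: the sum over at most $4N$ steps of $N^{-1}\|\R_0T_t\|_{L^2(F)}$ is $Cn^{5/4}\eta^{-6}$. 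The factors $\|Y\|$ are handled by the norm cutoff~\eqref{eq:weight}, costing $H_*=C\ell$, which explains the extra $\ell$. The martingale increments are again $O(\ell\eta^{-2})$ per step and contribute $O(n\ell\eta^{-2})$.

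The main obstacle is the drift term in which $s_t-\sigma$ multiplies a partial trace of a \emph{higher} inverse power, $\R_0(T^2)$ or $\R_0(YT^2)$. If these were themselves tracked they would demand $\R_0(T^3)$ and so on, which is exactly the degree hierarchy the deterministic path is designed to avoid. We resolve it by the crude deterministic bound $\|\R_0(T^2)\|_F\le Cn^{3/2}\eta^{-2}$ and Cauchy--Schwarz against $\|\max|s_t-\sigma|\|_2\le Cn^{-3/8}\eta^{-4}$. This yields exactly the loss $n^{3/2-3/8}\eta^{-6}\le n^{5/4}\eta^{-6}$. A weighted $\ell$ factor arises where $\|Y\|$ enters. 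The final squared statement follows by squaring.
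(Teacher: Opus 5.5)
Your outline matches the paper's proof: the same deterministic path, the cancellation $g_t=d_0-\frac1{N(1+s_t)}=\frac{d_0(s_t-\sigma)}{1+s_t}$, the deterministic bound $\|\R_0(T_t^2)\|_F\le Cn^{3/2}\eta^{-2}$ paired with Proposition~\ref{prop:scalar}, accumulation by Lemma~\ref{lem:rotated-energy}, and closure of the mixed equation through the source $\chi_t\R_0T_t/[N(1+s_t)]$. The step that fails as written is the denominator (diagonal-homogenization) error. Its exact form is $\mathcal E=N^{-1}\sum_a\delta_a t_at_a^*=N^{-1}T\operatorname{diag}(\delta_a)T$, with $\delta_a=(1+x_a)^{-1}-(1+s_t)^{-1}$.

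You bound it rank-one by rank-one using $\|\R_0(t_at_a^*)\|_F\le2\eta^{-2}$. Then the prefactor $N^{-1}$ is used up by the sum over the $N$ directions, and you get $\|\R_0\mathcal E\|_F\le2\eta^{-2}N^{-1}\sum_a|\delta_a|\le2\eta^{-2}\sqrt{V(T)}$ per step, with no further $N^{-1}$. Your per-step display $N^{-1}\eta^{-4}\|\sqrt{V(T)}\|_2$ therefore contains a spurious factor $N^{-1}$. Use $\|\sqrt{V(T)}\|_2\le Cn^{-3/8}\eta^{-1}$ and sum over the $\asymp N$ active steps after time $R$. Your method then only delivers a bound of order $n^{13/8}\eta^{-3}$, which exceeds the target $n^{5/4}\eta^{-6}$ by at least $n^{3/8-3/100}$. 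Your stated total $n^{3/2}n^{-3/8}\eta^{-5}$ is also not what your own per-step bound sums to. The same stray $N^{-1}$ appears in your description of the noise. There it is harmless: the centered increment is $O(\eta^{-2})$ in Frobenius norm, and your conclusion $\sum_t\E\|\xi_t\|_F^2\le CN\eta^{-4}$ is correct.

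The missing idea is to bound the averaged error matrix as a whole in Hilbert--Schmidt norm. This keeps the square-summability over directions that the triangle inequality throws away. Since $\|\operatorname{diag}(\delta_a)\|_{\HS}^2=\sum_a\delta_a^2\le NV(T)$, \eqref{eq:Rbounds} gives $\|\R_0\mathcal E\|_F\le C\sqrt n\,\|\mathcal E\|_{\HS}\le C\sqrt{n/N}\,\eta^{-2}\sqrt{V(T)}$. This is $Cn^{-7/8}\eta^{-3}$ per step in $L^2$ and $Cn^{9/8}\eta^{-3}$ in total. The mixed analogue only gains a factor $1+\|Y\|$, i.e.\ a factor $\ell$ after the cutoff. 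With this replacement your argument closes.

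Your handling of the first $R$ steps differs slightly from the paper's. You use a crude drift bound but sum the noise quadratically from time $0$. The paper instead sums those increments linearly, which produces the $CR\eta^{-2}$ term responsible for the exponent $5/4$. Either way the early interval fits within the budget.
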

\begin{proof}
We display the error bounds in the two recurrences. At an active step let $T=T_t$, $Y=Y_t$, $U=YT$, $d=d_0$, and $\chi=\chi_t$. Pull back the common rotation. Besides~\eqref{eq:exact-resolvent}, the exact mixed update is
\begin{equation}\label{eq:mixed-exact}
 U_{t+1}^{\mathrm{pb}}
 =Y\widehat T+
   \frac{(\chi a-Y\widehat T a)(\widehat T a)^*}
        {1+\ip a{\widehat T a}}.
\end{equation}
Define
\[
 Q_T(a)=\frac{(Ta)(Ta)^*}{1+\ip a{Ta}},\qquad
 Q_U(a)=\frac{(\chi a-Ua)(Ta)^*}{1+\ip a{Ta}}.
\]

The mixed identity can be checked without any commutation assumption on $Y$ and $T$. If $\widehat x=\langle a,\widehat Ta\rangle$, then
\begin{align*}
 (Y+\chi aa^*)\left(\widehat T-
 \frac{(\widehat Ta)(\widehat Ta)^*}{1+\widehat x}\right)
 &=Y\widehat T+\chi a(\widehat Ta)^*
   -\frac{(Y\widehat Ta+\chi\widehat x a)(\widehat Ta)^*}{1+\widehat x}\\
 &=Y\widehat T+
   \frac{(\chi a-Y\widehat Ta)(\widehat Ta)^*}{1+\widehat x}.
\end{align*}
In particular, the ordered product in the leading term is $UT=YT^2$, not $TYT$.

Expanding the deterministic parameter shift gives
\begin{align}
 T_{t+1}^{\mathrm{pb}}&=T+dT^2-Q_T(a)+E_{T,a},\label{eq:Texp}\\
 U_{t+1}^{\mathrm{pb}}&=U+dUT+Q_U(a)+E_{U,a}.\label{eq:Uexp}
\end{align}
The remainders satisfy the following sufficient bounds:
\begin{align}
 \norm{\R_0E_{T,a}}_F&\le C N^{-1}\eta^{-4},\label{eq:ET}\\
 \norm{\R_0E_{U,a}}_F&\le C N^{-1}(1+\norm Y_{\op})\eta^{-4}.
 \label{eq:EU}
\end{align}
Here and below they are zero at inactive steps. To verify these bounds, the full-matrix Taylor remainder of $\widehat T$ has norm at most $d^2\eta^{-3}$; apply~\eqref{eq:Rbounds}, multiplying by $\norm Y$ in the mixed case. The rank-one terms and their derivatives with respect to the argument have dimension-free partial-trace bounds by~\eqref{eq:Rbounds}. For $Q_T$, the derivative is a sum of three rank-one terms with total product norm at most $C\eta^{-2}$, as in the preceding proof. For $Q_U$, differentiating its two numerator vectors and denominator bounds that total by $C(1+\norm Y)\eta^{-4}$. Integrate over an interval of length $d\le1/N$. Finally $n^{3/2}/N$ is bounded, so $d^2 n^{3/2}\eta^{-3}\le C N^{-1}\eta^{-4}$. This proves~\eqref{eq:ET}--\eqref{eq:EU}.

A detailed check for the mixed rank-one derivative is useful. At a variable argument $z$, write
\[
 Q_{U,z}(a)=\chi\frac{a(T_za)^*}{1+x}-YQ_z,
 \qquad x=\langle a,T_za\rangle.
\]
The derivative of the first term is a sum of two rank-one operators:
\[
 -\chi\frac{a(T_z^2a)^*}{1+x}
 +\chi\frac{y\,a(T_za)^*}{(1+x)^2},\qquad y=\|T_za\|^2.
\]
Its sum of product norms is at most $C\eta^{-3}$ for $\eta\le1$. Multiplication of the three rank-one derivatives in~\eqref{eq:rank-derivative-expanded} by $Y$ costs at most $\|Y\|$. Applying the dimension-free partial-trace bound separately to these five terms gives
\[
 \|\partial_z\R_0Q_{U,z}(a)\|_F
 \le C(1+\|Y\|)\eta^{-4}.
\]
For the deterministic Taylor remainders the general partial-trace inequality gives
$Cn^{3/2}d^2\eta^{-3}$, or this times $\|Y\|$. Integrating the rank-one derivatives over length $d\le N^{-1}$ proves the two claimed error bounds with an absolute constant. This use of rank-one norms, rather than the general partial-trace inequality on the random update, preserves the needed power of $n$.

Write $s=\tau T$ and
\[
 \delta_a=\frac1{1+\ip a{Ta}}-\frac1{1+s}.
\]
Then $N^{-1}\sum_a\delta_a^2\le V(T)$. The exact conditional means of the rank-one terms at fixed $T$ are
\begin{align}
 \E_aQ_T(a)&=\frac{T^2}{N(1+s)}+
                \frac1N T\operatorname{diag}(\delta_a)T,
                \label{eq:Tmean}\\
 \E_aQ_U(a)&=\frac{\chi T-UT}{N(1+s)}+
                \frac1N(\chi I-U)\operatorname{diag}(\delta_a)T.
                \label{eq:Umean}
\end{align}
By the general partial-trace bound, their last terms obey

For example, $\|\operatorname{diag}(\delta_a)\|_{\HS}=\sqrt{N\,N^{-1}\sum_a\delta_a^2}$. Hence
\[
 \frac{C\sqrt n}{N}\|T\|_{\op}^2
       \|\operatorname{diag}(\delta_a)\|_{\HS}
 \le C\sqrt{\frac nN}\eta^{-2}\sqrt{V(T)}.
\]
For the mixed term, use $\|\chi I-U\|\le1+\|Y\|/\eta\le(1+\|Y\|)/\eta$. These are the factors behind the following bounds:

\begin{align}
 \left\|\R_0\left[N^{-1}T\operatorname{diag}(\delta_a)T\right]\right\|_F
 &\le C\sqrt{n/N}\,\eta^{-2}\sqrt{V(T)},\label{eq:den-T}\\
 \left\|\R_0\left[N^{-1}(\chi I-U)\operatorname{diag}(\delta_a)T\right]\right\|_F
 &\le C\sqrt{n/N}(1+\norm Y)\eta^{-2}\sqrt{V(T)}.
 \label{eq:den-U}
\end{align}
For $t\ge R$, Lemma~\ref{lem:diag-resolvent} and its weighted form make their $L^2$ norms at most
\begin{equation}\label{eq:den-L2}
 Cn^{-7/8}\eta^{-3},\qquad C\ell n^{-7/8}\eta^{-3},
\end{equation}
respectively.

The leading matrix coefficient after combining the shift and the mean update is
\begin{equation}\label{eq:gcoef}
 g_t=d_0-\frac1{N(1+s_t)}
     =\frac{d_0(s_t-\sigma)}{1+s_t}.
\end{equation}
Therefore the unrotated mean increment of $\R_0T$ is
$g_t\R_0(T_t^2)$, apart from the denominator and Taylor errors above. Since
$\norm{\R_0(T_t^2)}_F\le Cn^{3/2}\eta^{-2}$,
Proposition~\ref{prop:scalar} gives
\begin{equation}\label{eq:leading-T}
 \norm{g_t\R_0(T_t^2)}_2
 \le \frac C N n^{9/8}\eta^{-6}.
\end{equation}
For the mixed resolvent, the corresponding increment is
\begin{equation}\label{eq:leading-U-form}
 g_t\R_0(U_tT_t)+\frac{\chi_t}{N(1+s_t)}\R_0T_t.
\end{equation}
The first term has $L^2$ norm at most
$C N^{-1}n^{9/8}\ell\eta^{-6}$.
Indeed $\norm{\R_0(U_tT_t)}\le Cn^{3/2}\norm{Y_t}\eta^{-2}$, and the cutoff rule bounds
$\norm{\norm{Y_t}|s_t-\sigma|}_2$ by
$C\ell n^{-3/8}\eta^{-4}$ plus a negligible error.
The second term is at most $N^{-1}\norm{\R_0T_t}_2$ in $L^2$.

We explain how to sum these increments. For either partial trace, its exact recursion has the form
\[
 Z_{t+1}=V_t(Z_t+\beta_t+\xi_t)V_t^T,
 \qquad \E(\xi_t\mid\mathcal F_t)=0,
\]
where $V_t$ is the fresh fundamental rotation and $\beta_t$ is the conditional mean of the pulled-back increment. The rotation may depend on the centered increment. Define
$M_{t+1}=V_t(M_t+\xi_t)V_t^T$, starting from zero at time $R$. Norm invariance removes $V_t$ before taking expectations, so
\[
 \E\norm{M_T}_F^2=\sum_{t=R}^{T-1}\E\norm{\xi_t}_F^2.
\]
The difference $Z_T-M_T$ has norm at most
$\norm{Z_R}+\sum_{t\ge R}\norm{\beta_t}$.
This argument needs no assertion that the rotated increments are martingale differences.

The exact random parts in~\eqref{eq:exact-resolvent} and~\eqref{eq:mixed-exact}, together with the rank-one bound, show that the centered-increment $L^2$ norms are at most
$C\eta^{-2}$ and $C\ell\eta^{-2}$ respectively. Their accumulated noise norms over $4N$ steps are therefore at most $Cn\eta^{-2}$ and $Cn\ell\eta^{-2}$.
For the first $R$ steps, simply sum the exact increments. The deterministic shift contributes at most $C d_0n^{3/2}\eta^{-2}$ per step, or this times $\norm{Y_t}$ in the mixed case, and the rank-one term contributes at most $C\eta^{-2}$ or $C(1+\norm{Y_t})\eta^{-2}$. Thus their initial $L^2$ norms are bounded by
\begin{equation}\label{eq:initial-matrix}
 CR\eta^{-2},\qquad CR\ell\eta^{-2}.
\end{equation}
This also covers any terminal time less than $R$.

For reference, the complete $L^2$ ledger for the unweighted partial trace is
\begin{center}
\small
\begin{tabular}{p{0.38\textwidth}p{0.23\textwidth}p{0.28\textwidth}}
\toprule
Contribution & Per active step, after time $R$ & Accumulated through $4N$\\
\midrule
Taylor remainder & $CN^{-1}\eta^{-4}$ & $C\eta^{-4}$\\
Denominator error & $Cn^{-7/8}\eta^{-3}$ & $Cn^{9/8}\eta^{-3}$\\
Scalar-error drift & $CN^{-1}n^{9/8}\eta^{-6}$ & $Cn^{9/8}\eta^{-6}$\\
Centered noise & $C\eta^{-2}$ & $Cn\eta^{-2}$\\
Initial interval & --- & $Cn^{5/4}\eta^{-2}$\\
\bottomrule
\end{tabular}
\end{center}
Only the noise row is summed quadratically; the three drift rows use Minkowski. Every row for the mixed partial trace has at most an additional factor $C\ell$, except its source $\chi\R_0T/[N(1+s)]$, which sums to at most $4\sup_t\|\R_0T_t\|_2$. Lemma~\ref{lem:rotated-energy} is the precise accumulation rule.

For $\R_0T$, summing~\eqref{eq:ET}, \eqref{eq:den-L2}, and~\eqref{eq:leading-T}, with the noise and initial bounds, gives
\[
 \norm{\R_0T_T}_2\le C\left(
 n^{5/4}\eta^{-2}+n\eta^{-2}
 +n^{9/8}\eta^{-6}+n^{9/8}\eta^{-3}+\eta^{-4}\right)
 \le Cn^{5/4}\eta^{-6}.
\]
The bound is uniform in $T\le m$. Use it in the second term of~\eqref{eq:leading-U-form}. Its accumulation is at most four times that uniform bound. All the other mixed estimates are the preceding ones with at most a factor $C\ell$. Consequently
$\norm{\R_0U_T}_2\le Cn^{5/4}\ell\eta^{-6}$.
This proves~\eqref{eq:partial-resolvent}. All discarded tails are smaller than the displayed bounds uniformly for $\eta\ge n^{-1/100}$.
\end{proof}

\begin{remark}
The proof does not bound a sequence $\R_0T^k$ recursively. The only higher resolvent products are $T^2$ and $YT^2$ multiplied by the already-small scalar error $s_t-\sigma$. They are estimated deterministically. The source in the mixed equation contains $\R_0T$, which has already been controlled. This is the closure that replaces the polynomial degree induction.
\end{remark}

\section{Two independent covariance blocks}\label{sec:two-block}
For independent Kac covariances $X,Y$ whose lengths sum to at most $2N$, define
\[
 T=(X+Y+\eta I)^{-1},\qquad U=YT,\qquad
 Q(A)=\frac1N\sum_a\norm{L_aAa}^2.
\]
Each covariance may be left- or right-trivialized; their marginal distributions coincide.

\begin{proposition}\label{prop:two-block}
For $0\le j\le4$, uniformly in the lengths and allowed $\eta$,
\begin{equation}\label{eq:two-block-resolvent}
 \E(1+\norm X+\norm Y)^j\bigl(Q(T)+Q(U)\bigr)
 \le C_j n^{-1/2}\ell^{j+4}\eta^{-12}.
\end{equation}
\end{proposition}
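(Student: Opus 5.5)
Our approach is to reduce $Q(T)$ and $Q(U)$ to the Haar commutator estimate~\eqref{eq:haar-commutator} by inserting an independent separator between the two blocks, and then to feed in the partial-trace bounds of Proposition~\ref{prop:partial-resolvent}.

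\textbf{Step 1: reduction to a commutator average.} Since $L_a=\ad_{\sqrt2a}$, we have $\norm{L_aAa}^2=2\norm{[a,Aa]}^2$, so $Q(A)$ is the coordinate average of the squared commutator. The plan is to write the longer of the two blocks (say $X$) as a composite, $X=\rho(G)X'\rho(G)^*+B$, where $G$ is the product of the last $R$ rotations of that block and $\rank B\le R$. If the block is shorter than $R$ (then both are short only if the total length is less than $2R$), we instead peel the separator from a $2N$-length core, or treat that case crudely. Since $X$ and $Y$ are independent, conjugating everything by $\rho(G)^*$ turns the coordinate vectors $a$ into $\rho(G)^*a$, while the resolvent becomes $(X'+\rho(G)^*Y\rho(G)+B'+\eta I)^{-1}$. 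The rank-$R$ piece $B'$ will be removed via~\eqref{eq:low-rank-elementary}. Since $Q(E)\le 4\norm{E}_{\HS}^2/N$ for any $E$, this costs $CR\eta^{-2}/N=Cn^{-3/4}\eta^{-2}$ for $T$, and $C\norm Y^2R\eta^{-2}/N$ for $U$. After removal, the test is exactly of the relative-rotation type handled by Lemma~\ref{lem:rational-comparison} (the tests $Q(T)$ and $Q(YT)$). Replacing the separator by Haar costs $n^{-100}$.

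\textbf{Step 2: Haar commutator bound.} Under Haar, the test $Q$ averages $\norm{[w,Aw]}^2$ over Haar-distributed simple bivectors $w=\rho(G)^*a$ (each coordinate vector is simple). Applying~\eqref{eq:haar-commutator} gives
\[
 \E Q(A)\le Cn^{-3}\bigl(\norm{A-\tau(A)I}_{\HS}^2+\norm{\R_0A}_F^2\bigr).
\]
We take $A=T$ or $A=U$, with the separator-free covariances. The first term is at most $N\eta^{-2}$, or $N\norm Y^2\eta^{-2}$, contributing $Cn^{-1}\eta^{-2}$ up to weights. For the second term we use Proposition~\ref{prop:partial-resolvent}. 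The sum of two independent Kac covariances is, in law, a colored covariance for a deterministic schedule: run one block, then the other, with $\chi$ marking the $Y$ steps and the regularization held at terminal value $\eta$. The right-trivialized recursion~\eqref{eq:colored} realizes $\rho(F_2)X\rho(F_2)^*+Y$, and conjugation invariance of the tests lets us drop the extra rotation. The proposition then gives $\E\norm{\R_0T}_F^2+\E\norm{\R_0U}_F^2\le Cn^{5/2}\ell^2\eta^{-12}$, and hence $n^{-3}$ times this is $Cn^{-1/2}\ell^2\eta^{-12}$. The same must be checked for the post-removal resolvent, since it differs from the original by rank at most $R$. By~\eqref{eq:Rbounds}, this difference has $\norm{\R_0\cdot}_F^2\le Cn\,R\eta^{-2}$, which is harmless after the factor $n^{-3}$.

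\textbf{Step 3: weights and the main difficulty.} The norm weight $(1+\norm X+\norm Y)^j$ is handled by the cutoff rule~\eqref{eq:weight}, which gives a factor $H_*^j=C\ell^j$. The remaining powers of $\ell$ come from $\norm Y^2$ in the $U$ estimates, which account for the $\ell^{4}$. The main obstacle is Step 1's bookkeeping. The weight and the factor $\norm Y$ must be made measurable with respect to the coefficient event before Haarization, so one restricts to $\norm X+\norm Y\le H_*$ exactly as in the proof of Lemma~\ref{lem:rational-comparison}. One must also verify that the removed rank-$R$ piece and the separator are independent of $Y$, so that the separator conjugation is genuinely relative. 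I would first try to place the separator inside whichever block has length at least $R$; when both are short, the total covariance has rank at most $2R$, so $T$ is close to $\eta^{-1}$ off a small subspace, and the direct bound $Q(T)\le Q(T-\eta^{-1}I)\le 4\norm{T-\eta^{-1}I}_{\HS}^2/N\le CR\eta^{-2}/N$ suffices; likewise $Q(U)\le4\norm U_{\HS}^2/N$ with $\rank U\le R$.
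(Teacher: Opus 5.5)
Your Step~2 fails when both blocks have length at least $R$, which is the main case.

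Haarizing the separator of one block gives a Haar \emph{relative} orientation of $X$ against $Y$. But \eqref{eq:haar-commutator} needs a Haar simple bivector that is independent of the operator, i.e.\ a Haar \emph{common} orientation. After you conjugate by $\rho(G)^*$, the vector is $w=\rho(G)^*a$, while the operator is $(X'+\rho(G)^*Y\rho(G)+\eta I)^{-1}$. It is not a separator-free resolvent: the conjugation only moves $G$ onto $Y$. Equivalently, the pair $(\rho(G)X'\rho(G)^*,Y)$ is not invariant in law under a common conjugation $\rho(V)(\cdot)\rho(V)^*$, because the law of a Kac covariance is invariant only under signed permutations. So you cannot condition on $A$ and average over $w$, as Lemma~\ref{lem:haar} requires. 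If the second block is shorter than $R$, you could repair this by deleting it entirely as a rank-$<R$ perturbation. That is not what you wrote, and it does not help when both blocks are long.

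The missing step is to split off the last $R$ steps of \emph{each} block, $X=X^{\mathrm o}+B_X$ and $Y=Y^{\mathrm o}+B_Y$. Remove both recent parts, which have total rank at most $2R$; for the mixed resolvent use $U-U^{\mathrm o}=B_YT+Y^{\mathrm o}(T-T^{\mathrm o})$. Then Haarize both independent old separators with Lemma~\ref{lem:rational-comparison}, which explicitly allows two. The Haarized pair $(\widetilde X,\widetilde Y)$ is invariant under common Haar conjugation, also when one old part is zero. Conditioning on it and applying one more independent Haar rotation $V$ makes $\rho(V)^*a$ a Haar simple bivector independent of $\widetilde T,\widetilde U$. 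This is exactly the paper's argument.

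The partial traces needed afterwards are those of a pair with a Haar relative rotation. Since $\norm{\R_0(\cdot)}_F$ is conjugation invariant, your own chain supplies this:
\begin{itemize}
\item Proposition~\ref{prop:partial-resolvent} for the actual pair;
\item a rank-$2R$ correction costing $O(nR\ell^2\eta^{-2})$;
\item Lemma~\ref{lem:rational-comparison} on the tests $\norm{\R_0T}_F^2$ and $\norm{\R_0(YT)}_F^2$.
\end{itemize}
This is a valid alternative to the paper's auxiliary process with $R$ inserted inactive rotations.

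Finally, Proposition~\ref{prop:partial-resolvent} is applied at the terminal time of the decreasing path $z_t$, where $z_m=\eta$. The regularization is not held at $\eta$ throughout, since its drift cancellation depends on that path.
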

\begin{proof}
We first establish a relative-Haar version of the partial-trace estimate. Run an auxiliary colored process consisting of the $X$ core, $R$ inactive rotations $G_R$, and the $Y$ core. Conjugate its terminal pair back by the endpoint of the last core. The pair becomes exactly
\[
 (\rho(G_R)X\rho(G_R)^*,Y),
\]
where $X,G_R,Y$ are independent and $Y$ is left-trivialized. The auxiliary trajectory has at most $3N$ actual rotations, for large $n$. Apply Proposition~\ref{prop:partial-resolvent} at terminal regularization $\eta$; the centered partial-trace norms are invariant under the common final conjugation. Lemma~\ref{lem:rational-comparison} then gives
\begin{equation}\label{eq:relative-partial}
 \E\norm{\R_0(\rho(G)X\rho(G)^*+Y+\eta I)^{-1}}_F^2
 +\E\norm{\R_0\!\left[Y(\rho(G)X\rho(G)^*+Y+\eta I)^{-1}\right]}_F^2
 \le C n^{5/2}\ell^2\eta^{-12}
\end{equation}
for independent Haar $G$. The same bound holds with independent Haar conjugations of both arguments: remove the common rotation of the second argument. Its relative rotation against the first is an independent Haar variable. These auxiliary rotations are used only to prove an expectation estimate; they are not added to the coupled walk.

The independence assertion here is exact and deserves emphasis. Let $H$ be the endpoint of the last core, $D_Y$ its right covariance, and $D_X$ the covariance of the first core. Before the final conjugation, the two colored matrices are
\[
 \rho(HG_R)D_X\rho(HG_R)^*,\qquad D_Y.
\]
Conjugating both by $\rho(H)^*$ gives $\rho(G_R)D_X\rho(G_R)^*$ and $\rho(H)^*D_Y\rho(H)$, the latter being the left covariance of the last core. Although $H$ and $D_Y$ are dependent, neither remains in the first expression. The two cores and separator were generated independently, so $D_X,G_R,$ and that left covariance are mutually independent. This is why we can condition on the coefficient matrices in the rational Haar comparison. We never split a core's endpoint from its own covariance by an independence assumption.

For the actual pair $X,Y$, split the last $R$ steps of each right covariance:
\[
 X=X^{\mathrm o}+B_X,\qquad Y=Y^{\mathrm o}+B_Y,
 \qquad B_X,B_Y\succeq0,\quad \Tr B_X+\Tr B_Y\le2R.
\]
If a block is shorter than $R$, put its whole covariance into the recent part and set its old part to zero. Each nonzero old covariance is an independent $R$-step conjugation of an independent earlier covariance. Set
\[
 T^{\mathrm o}=(X^{\mathrm o}+Y^{\mathrm o}+\eta I)^{-1},
 \qquad U^{\mathrm o}=Y^{\mathrm o}T^{\mathrm o}.
\]
The difference $T^{\mathrm o}-T$ is positive semidefinite, of rank at most $2R$, and norm at most $\eta^{-1}$. Therefore
\begin{equation}\label{eq:resolvent-recent}
 \norm{T-T^{\mathrm o}}_{\HS}^2\le2R\eta^{-2}.
\end{equation}
For the mixed resolvent,
\[
 U-U^{\mathrm o}=B_YT+Y^{\mathrm o}(T-T^{\mathrm o}),
\]
so the norm tail, $\norm{B_Y}_{\HS}^2\le\norm Y\Tr B_Y$, and~\eqref{eq:resolvent-recent} give
\begin{equation}\label{eq:mixed-recent}
 \E\norm{U-U^{\mathrm o}}_{\HS}^2
 \le C R\ell^2\eta^{-2}.
\end{equation}
No independence between recent covariances and their endpoint rotations is needed in these bounds.

Indeed, the two terms in the mixed difference satisfy, pathwise,
\[
 \|B_YT\|_{\HS}^2\le\eta^{-2}\|B_Y\|_{\HS}^2
 \le2R\eta^{-2}\|Y\|,
\]
\[
 \|Y^{\mathrm o}(T-T^{\mathrm o})\|_{\HS}^2
 \le\|Y\|^2\,2R\eta^{-2}.
\]
The inequality $\|A+B\|^2\le2\|A\|^2+2\|B\|^2$ and the fixed operator-norm moments from Lemma~\ref{lem:norm-tail} prove~\eqref{eq:mixed-recent}. The same calculation works if a core is shorter than $R$, because its complete covariance then has trace equal to its length, at most $R$.

For any $E$, $Q(E)\le\norm E_{\HS}^2/N$ since $\norm{L_a}\le1$. The recent terms in~\eqref{eq:resolvent-recent}--\eqref{eq:mixed-recent} thus contribute at most
$C n^{-3/4}\ell^2\eta^{-2}$ to the unweighted claim. Haarize the independent old separators using Lemma~\ref{lem:rational-comparison}. The resulting pair $(\widetilde X,\widetilde Y)$ is invariant under common Haar conjugation. This remains true when one old block is zero. Let $\widetilde T=(\widetilde X+\widetilde Y+\eta I)^{-1}$ and $\widetilde U=\widetilde Y\widetilde T$.

Introduce a common independent Haar rotation and rotate the vector expression back. The coordinate generator becomes a Haar simple bivector $w$.

To justify this without an independence shortcut, condition first on one realization of the pair $(\widetilde X,\widetilde Y)$ and then apply an additional independent Haar rotation $V$ to both. Their unconditional law is unchanged by this operation. For either resulting operator $A$, rotate the expression $L_a\rho(V)A\rho(V)^*a$ back by $\rho(V)^*$. It becomes
$\sqrt2[w,Aw]$ with $w=\rho(V)^*a$. Its conditional law is that of a Haar simple bivector, independent of the fixed operator $A$. Applying the Haar inequality conditionally and then averaging gives the following bound.
 Lemma~\ref{lem:haar} therefore gives
\[
 \E\bigl(Q(\widetilde T)+Q(\widetilde U)\bigr)
 \le \frac C{n^3}\E\left(
 \norm{\widetilde T-\tau(\widetilde T)I}_{\HS}^2
 +\norm{\widetilde U-\tau(\widetilde U)I}_{\HS}^2
 +\norm{\R_0\widetilde T}_F^2+\norm{\R_0\widetilde U}_F^2\right).
\]
The Hilbert--Schmidt terms have expectation at most $C N\ell^2\eta^{-2}$. The partial traces have expectation at most $C n^{5/2}\ell^2\eta^{-12}$ by~\eqref{eq:relative-partial}, applied to the shorter independent blocks. Thus the old contribution is at most $C n^{-1/2}\ell^2\eta^{-12}$. Combine with the recent terms and negligible comparison errors. Finally apply~\eqref{eq:weight} to insert a fixed norm weight of order $j$. The weaker logarithmic power $\ell^{j+4}$ in~\eqref{eq:two-block-resolvent} leaves a reserve and proves the claim.
\end{proof}

\section{Individual-angle influence of a regularized inverse}\label{sec:influence}
The next inequality is deterministic. It avoids expanding a resolvent into covariance powers.

\begin{lemma}\label{lem:det-influence}
Let $X,Y\succeq0$ be operators on $\g$, let $a$ be a coordinate generator, and put
\[
 T=(X+Y+\eta I)^{-1},\quad M_Y=\norm Y_{\op},\quad
 D_a(\theta)=X+P_a+O_a(\theta)^*Y O_a(\theta).
\]
For $0<\eta\le1$,
\begin{equation}\label{eq:det-influence}
 \left\|\left(\partial_\theta(D_a(\theta)+\eta I)^{-1}\right)a\right\|
 \le \frac C\eta\left(1+\frac{M_Y}\eta\right)
 \left(\norm{L_aYT a}+M_Y\norm{L_aTa}\right).
\end{equation}
Consequently its square is at most
\begin{equation}\label{eq:det-influence-square}
 C\eta^{-4}(1+M_Y)^4
       \left(\norm{L_aYT a}^2+\norm{L_aTa}^2\right).
\end{equation}
\end{lemma}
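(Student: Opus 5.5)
The plan is a direct differentiation followed by a comparison of the rotated resolvent vector with the unrotated one $Ta$. The expected obstacle is that the left side is evaluated at a general $\theta$, and with $P_a$ added. The right side involves only $T=(X+Y+\eta I)^{-1}$ and the two commutator-type vectors $L_aTa$ and $L_aYTa$.

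\textbf{Step 1: differentiate.} From~\eqref{eq:coordinate-rotation} and $L_a^3=-L_a$ we get
\[
 \partial_\theta O_a=\cos\theta L_a+\sin\theta L_a^2=O_aL_a=L_aO_a .
\]
Since $L_a$ is skew and commutes with $O_a$, this gives
\[
 \partial_\theta\bigl(O_a^*YO_a\bigr)=Y_\theta L_a-L_aY_\theta,\qquad Y_\theta:=O_a^*YO_a .
\]
With $T_\theta=(D_a(\theta)+\eta I)^{-1}$ and $u_\theta=T_\theta a$,
\[
 (\partial_\theta T_\theta)a=-T_\theta Y_\theta L_au_\theta+T_\theta L_aY_\theta u_\theta .
\]
Because $\|T_\theta\|\le\eta^{-1}$ and $\|Y_\theta\|=M_Y$, it suffices to show
\[
 \|L_au_\theta\|+\eta^{-1}\|L_aY_\theta u_\theta\|\cdot\eta\ \lesssim\ \Bigl(1+\frac{M_Y}{\eta}\Bigr)\bigl(\|L_aYTa\|+M_Y\|L_aTa\|\bigr),
\]
i.e.\ to bound $M_Y\|L_au_\theta\|+\|L_aY_\theta u_\theta\|$ by the right-hand bracket times $C(1+M_Y/\eta)$.

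\textbf{Step 2: remove $P_a$.} Put $T^0_\theta=(X+Y_\theta+\eta I)^{-1}$. The rank-one formula~\eqref{eq:SM-elementary} gives
\[
 u_\theta=\frac{T^0_\theta a}{1+\ip a{T^0_\theta a}} .
\]
The scalar factor lies in $(0,1]$ and cancels from all linear expressions. So we may replace $u_\theta$ by $T^0_\theta a$.

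\textbf{Step 3: compare $T^0_\theta a$ with $Ta$.} The resolvent identity gives
\[
 T^0_\theta a-Ta=T^0_\theta(Y-Y_\theta)Ta,\qquad
 Y-Y_\theta=(I-O_a^*)YO_a+Y(I-O_a).
\]
We then use four facts:
\begin{itemize}
\item $\|(O_a-I)z\|\le2\|L_az\|$, and the same for $O_a^*$;
\item $O_a$ commutes with $L_a$;
\item $O_a$ is an isometry;
\item $O_aTa$ can be compared with $Ta$ at the same cost $\|L_aTa\|$.
\end{itemize}
Together these give
\[
 \|(Y-Y_\theta)Ta\|\le C\bigl(\|L_aYTa\|+M_Y\|L_aTa\|\bigr)=:C\Lambda .
\]
Hence $\|T^0_\theta a-Ta\|\le C\eta^{-1}\Lambda$.

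\textbf{Step 4: bound the two needed quantities.} Then
\[
 \|L_aT^0_\theta a\|\le\|L_aTa\|+C\eta^{-1}\Lambda .
\]
For the second quantity we write
\[
 L_aY_\theta T^0_\theta a=L_aY_\theta Ta+L_aY_\theta(T^0_\theta a-Ta).
\]
The first term equals $O_a^*L_aYO_aTa$, because $L_a$ commutes with $O_a$. Replacing $O_aTa$ by $Ta$ costs $M_Y\cdot2\|L_aTa\|$, so this term is at most $\|L_aYTa\|+2M_Y\|L_aTa\|$. The second term is at most $M_Y\,C\eta^{-1}\Lambda$. Collecting, both quantities are $\le C(1+M_Y/\eta)\Lambda$, which proves~\eqref{eq:det-influence} after the factor $\eta^{-1}$ from Step 1.

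\textbf{Step 5: the squared form.} For~\eqref{eq:det-influence-square}, square~\eqref{eq:det-influence}. Then use $\eta\le1$ to bound $(1+M_Y/\eta)^2\le\eta^{-2}(1+M_Y)^2$, and $(\,\|L_aYTa\|+M_Y\|L_aTa\|\,)^2\le2(1+M_Y)^2(\cdots)$.

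The main care point is Step 3. One must apply the rotation defect $O_a-I$ only to vectors on which $L_a$ is being measured, namely $Ta$ and $YTa$, and never to vectors like $YO_aTa$ without first commuting $L_a$ through $O_a$. I would check that ordering first.
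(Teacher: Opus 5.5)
Your proof is correct and is essentially the paper's argument. The paper also writes the derivative as $T_\theta[L_a,O_a^*YO_a]T_\theta a$ and closes with Sherman--Morrison, a resolvent identity against the unrotated operator, the defect bound $\|(O_a-I)z\|\le2\|L_az\|$, and the commutation of $L_a$ with $O_a$. It differs only in ordering: it applies Sherman--Morrison at $\theta=0$, comparing $T_\theta a$ with $(X+Y+P_a+\eta I)^{-1}a$, instead of first stripping $P_a$ at angle $\theta$. It also splits $O_a^*YO_a-Y=O_a^*Y(O_a-I)+(O_a^*-I)Y$, which avoids your extra comparison of $O_aTa$ with $Ta$. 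Your first display in Step 1 drops the factor $M_Y$ on $\|L_au_\theta\|$, but the ``i.e.'' restatement is the right target and is what Step 4 proves.
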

\begin{proof}
Suppress the subscript $a$ on $L,O$. Let
\[
 T_0=(X+Y+P_a+\eta I)^{-1},\quad
 T_\theta=(D_a(\theta)+\eta I)^{-1},\quad
 z_0=T_0a,\quad z_\theta=T_\theta a.
\]
Sherman--Morrison gives $z_0=Ta/(1+\ip a{Ta})$. For every vector $z$,
\begin{equation}\label{eq:DeltaY}
 \norm{(O^*YO-Y)z}
 \le C\bigl(\norm{LYz}+M_Y\norm{Lz}\bigr).
\end{equation}
Indeed write the difference as $O^*Y(O-I)z+(O^*-I)Yz$ and use~\eqref{eq:coordinate-rotation} and the bound following it. The resolvent identity then yields, with $D_0=\norm{LYTa}+M_Y\norm{LTa}$,
\begin{equation}\label{eq:z-difference}
 \norm{z_\theta-z_0}
 =\norm{T_\theta(O^*YO-Y)z_0}
 \le C\eta^{-1}D_0.
\end{equation}
Put $Z=O^*YO$. Since $L$ commutes with $O$,
\[
 \norm{LZz}\le\norm{LYz}+CM_Y\norm{Lz}.
\]
Also $\partial_\theta D_a=-[L,Z]$, so the inverse derivative applied to $a$ is $T_\theta[L,Z]z_\theta$. Its norm is at most
\[
 C\eta^{-1}\left(\norm{LYz_\theta}+M_Y\norm{Lz_\theta}\right).
\]
Replace $z_\theta$ by $z_0$ using~\eqref{eq:z-difference}. The error costs at most $CM_Y\norm{z_\theta-z_0}$, since $\norm L\le1$. The $z_0$ terms are bounded by $D_0$, because $1+\ip a{Ta}\ge1$. This proves~\eqref{eq:det-influence}. Squaring and using $\eta\le1$ proves~\eqref{eq:det-influence-square}.
\end{proof}

\begin{proposition}\label{prop:influences}
Let $B$ be the left covariance of an $m$-step block, $m\le2N$, with columns $w_s$. Then
\begin{equation}\label{eq:influences}
 \frac1N\sum_{s=1}^m
 \E\norm{\left(\partial_{\theta_s}(B+\eta I)^{-1}\right)w_s}^2
 \le C n^{-1/2}\ell^8\eta^{-16}.
\end{equation}
\end{proposition}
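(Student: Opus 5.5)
Fix $s$ and split the left covariance at the $s$th step. The columns $w_1,\dots,w_{s-1}$ depend only on the first $s-1$ rotations, and $w_s$ does not depend on $\theta_s$. The later columns $w_{s+1},\dots,w_m$ are obtained from the suffix covariance conjugated by $\rho(R_s\cdots R_1)$. Concretely, write
\[
 B=\rho(F_{s-1})^{-1}\bigl(X+P_{a}+O_a(\theta_s)^*YO_a(\theta_s)\bigr)\rho(F_{s-1}),\qquad a=a_{e_s},
\]
where $X$ is the right covariance of the first $s-1$ steps and $Y$ is the left covariance of the last $m-s$ steps. Here $X$ and $Y$ are independent covariances of the two sub-blocks, $Y$ being left-trivialized, and both are independent of $(e_s,\theta_s)$. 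After pulling back by the common conjugation $\rho(F_{s-1})$, the column $w_s$ becomes exactly $a$. The quantity in~\eqref{eq:influences} is then $\norm{(\partial_\theta(D_a(\theta)+\eta I)^{-1})a}^2$ evaluated at $\theta=\theta_s$, which is precisely the setting of Lemma~\ref{lem:det-influence}. (I will check that the sign and transpose conventions for $O_a$ match the lemma. The order of $O$ and $O^*$ is immaterial, since $\theta$ is uniform and symmetric.)

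Applying~\eqref{eq:det-influence-square} gives, pointwise,
\[
 \norm{(\partial_{\theta_s}(B+\eta I)^{-1})w_s}^2\le C\eta^{-4}(1+\norm Y)^4\bigl(\norm{L_aYTa}^2+\norm{L_aTa}^2\bigr),\qquad T=(X+Y+\eta I)^{-1}.
\]
The right side does not involve $\theta_s$. The generator $a$ is uniform over the coordinate basis and independent of $(X,Y)$, so averaging over $a$ turns the bracket into $Q(U)+Q(T)$, where $U=YT$. This is the two-block functional of Section~\ref{sec:two-block}. The lengths $s-1$ and $m-s$ sum to at most $2N$, so Proposition~\ref{prop:two-block} with $j=4$ applies, once $(1+\norm Y)^4$ is bounded by $(1+\norm X+\norm Y)^4$. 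It gives
\[
 \E\norm{(\partial_{\theta_s}(B+\eta I)^{-1})w_s}^2\le C\eta^{-4}\,n^{-1/2}\ell^{8}\eta^{-12}=Cn^{-1/2}\ell^8\eta^{-16},
\]
uniformly in $s$ and in the block lengths. Summing over $s\le m\le 2N$ and dividing by $N$ costs only a factor $2$, which yields~\eqref{eq:influences}.

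The main point needing care is the exact decomposition in the first step. I need to confirm three things:
\begin{itemize}
\item the later left columns are the conjugate of an independent suffix left covariance by $\rho(R_s F_{s-1})$;
\item the $\theta_s$-dependence enters only through $O_a(\theta_s)$;
\item the factor $R_s$ fixes $a$, so that $P_a$ appears unrotated.
\end{itemize}
I also need the marginal laws of $X$ (a right covariance) and $Y$ (a left covariance) to match the hypotheses of Proposition~\ref{prop:two-block}. That proposition explicitly allows either trivialization, so this should be fine. Conjugation invariance of $\norm{\cdot}$ lets the pulled-back quantity equal the original one, so no further Haar comparison is needed at this stage.
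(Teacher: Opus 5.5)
Your proposal is correct and is essentially the paper's own proof. Both conjugate into the frame just before step $s$, so that $w_s\mapsto a$ and $B\mapsto X+P_a+O_a(\theta_s)^*YO_a(\theta_s)$, with $X$ the independent past right covariance and $Y$ the future left covariance. Then apply Lemma~\ref{lem:det-influence}, average over the fresh plane to get $Q(T)+Q(U)$, invoke Proposition~\ref{prop:two-block} with $j=4$, and sum over at most $2N$ positions. The decomposition you flag for checking holds exactly as written, and the order $O_a^*YO_a$ matches the lemma directly, so your symmetry remark about $O$ versus $O^*$ is not needed.
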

\begin{proof}
Conjugate into the frame immediately before step $s$. It is independent of $\theta_s$ and sends $w_s$ to the fresh generator $a$. In that frame,
\[
 B\longmapsto X+P_a+O_a(\theta_s)^*YO_a(\theta_s),
\]
where $X$ is the right covariance of the past block and $Y$ is the left covariance of the future block. These two blocks, the fresh plane, and the fresh angle are mutually independent. This follows directly from the prefix formula for the left columns: past columns become right columns, while future columns have the extra factor $O_a(\theta_s)^*$.

Apply~\eqref{eq:det-influence-square} and average over the fresh plane. Proposition~\ref{prop:two-block}, with $j=4$, bounds the expectation for this single position by $C n^{-1/2}\ell^8\eta^{-16}$. The constants are uniform in the two lengths, including zero.

The change of frame is independent of $\theta_s$, so differentiation introduces no derivative of that frame. This is important: in the identity above, the only $\theta_s$-dependence is the explicit $O_a(\theta_s)^*YO_a(\theta_s)$. It follows that the derivative computed by Lemma~\ref{lem:det-influence} is exactly the conjugate of $(\partial_{\theta_s}(B+\eta I)^{-1})w_s$, not an approximation to it. The combined lengths of $X$ and $Y$ are $m-1\le2N$, so Proposition~\ref{prop:two-block} applies to every position.
 Summing over $m\le2N$ positions and dividing by $N$ absorbs at most a factor two and proves~\eqref{eq:influences}.
\end{proof}

\section{An exactly law-preserving coupling}\label{sec:coupling}
For a fixed coordinate-plane word of length $m=2N$, its endpoint map on the angle torus is $F(\theta)=R_m\cdots R_1$. The left-trivialized differential is
\[
 F^{-1}\partial_{\theta_s}F=\sqrt2w_s,\qquad
 Jz=\sqrt2\sum_s z_sw_s,\qquad JJ^*=2B.
\]
For a tangent displacement $h\in\g$, let $T_B=(B+\eta I)^{-1}$ and define
\begin{equation}\label{eq:fields}
 v_s(\theta)=-\frac1{\sqrt2}\ip{w_s}{T_Bh},\qquad
 u_s(\theta_{-s})=\frac1{2\pi}\int_0^{2\pi}
                       v_s(\theta_{-s},\alpha)\,d\alpha.
\end{equation}
Every $u_s$ has zero derivative in its own angle, so $\diver u=0$. For fixed $n,\eta$ the field is smooth on the compact angle torus. Its exact flow $\Phi_\epsilon$ is globally defined and preserves the joint uniform angle law, by the Jacobian formula
\[
 \det D\Phi_\epsilon
 =\exp\left(\int_0^\epsilon(\diver u)\circ\Phi_t\,dt\right)=1.
\]
Thus $F(\theta)$ and $F(\Phi_\epsilon(\theta))e^{\epsilon h}$ have the correct block marginals from $I$ and $e^{\epsilon h}$. Their first-order left-trivialized difference is
\begin{equation}\label{eq:endpoint-residual}
 h+Ju=\eta T_Bh+J(u-v).
\end{equation}
The flow, not an additive angle shift, is used. Nothing in this construction requires a density for the endpoint law or invertibility of $B$.

For the first-order identity, smoothness gives
\[
 F(\theta)^{-1}F(\Phi_\epsilon(\theta))
 =I+\epsilon Ju(\theta)+O(\epsilon^2).
\]
Right multiplication by $e^{\epsilon h}=I+\epsilon h+O(\epsilon^2)$ gives tangent displacement $Ju+h$. Moreover,
\[
 Jv=-\sum_s w_sw_s^*T_Bh=-BT_Bh,
 \qquad I-BT_B=\eta T_B.
\]
This verifies both the sign and normalization in~\eqref{eq:endpoint-residual}. The Riemannian norm of the tangent displacement gives the first-order distance. For fixed $n,\eta$, all functions are smooth on finitely many compact angle tori and the sphere of unit tangent directions, so the integrated squared-distance expansion has a uniform $o(\epsilon^2)$ remainder.

\subsection{Directional symmetry}
Let $Q$ be a uniform orientation-preserving signed permutation. For $n\ge5$,
\begin{equation}\label{eq:symmetry}
 \E_Q(\rho(Q)h)(\rho(Q)h)^*=\frac{\norm h^2}{N}I.
\end{equation}
To verify it, first average over determinant-one coordinate signs. Products of distinct edge coefficients have a nontrivial character supported on two or four coordinates, a nonempty proper subset when $n\ge5$. Thus off-diagonal products average to zero. Signed coordinate permutations then equalize all the diagonal entries; their sum is $\norm h^2$.

The Kac kernel is invariant under conjugation by this finite group. Construct the coupling with $h_Q=\rho(Q)h$, using $Q$ independent of the block inputs, and conjugate both endpoints back by $Q$. It is a legal coupling from the original starting points and costs no additional walk steps. In estimates below this symmetry averages the direction $h$ independently of the random covariances.

\subsection{Cost of averaging each angle component}

For a smooth scalar function on the circle, write
$f(\theta)=\sum_{k\in\mathbb Z}\widehat f_k e^{ik\theta}$. Parseval gives, for normalized Lebesgue measure,
\[
 \int|f-\bar f|^2=\sum_{k\ne0}|\widehat f_k|^2
 \le\sum_{k\ne0}k^2|\widehat f_k|^2=\int|f'|^2.
\]
Apply this to the $s$th angle component, keeping the other angles, the plane word, and $h_Q$ fixed. Integrate the resulting inequality over those inputs and sum $s$ to obtain
\begin{equation}\label{eq:circle-Poincare-full}
 \E\sum_s|u_s-v_s|^2\le\sum_s\E|\partial_{\theta_s}v_s|^2.
\end{equation}
This is applied globally on the angle torus, conditional on the other angles and the plane word; no covariance cutoff is inserted into it. Since $w_s$ has zero own-angle derivative and $\partial_{\theta_s}T_B$ is symmetric, symmetrization and Proposition~\ref{prop:influences} give
\begin{equation}\label{eq:field-cost}
 \E\sum_s|u_s-v_s|^2
 \le\frac{\norm h^2}{2N}
       \sum_s\E\norm{(\partial_{\theta_s}T_B)w_s}^2
 \le C n^{-1/2}\ell^8\eta^{-16}\norm h^2.
\end{equation}
On $\norm B\le H_*=C_*\ell$, multiply by $2H_*$ using $JJ^*=2B$.
On the complement, the global inverse bound $\norm{T_B}\le\eta^{-1}$ implies
$|u_s|+|v_s|\le C\eta^{-1}\norm h$ at every angle configuration. Hence
\[
 \norm{J(u-v)}^2\le Cm^2\eta^{-2}\norm h^2.
\]
The covariance norm tail controls this contribution by $n^{-100}\norm h^2$, with $C_*$ fixed large enough. In particular the own-angle average is not assumed to stay inside the good-norm event. Combining with~\eqref{eq:field-cost}, and deliberately weakening the powers, yields
\begin{equation}\label{eq:repair-final}
 \E\norm{J(u-v)}^2
 \le C n^{-1/2}\ell^{12}\eta^{-20}\norm h^2.
\end{equation}

The spectral residual is controlled by Proposition~\ref{prop:scalar}. Since $B$ and $D_{2N}$ are isospectral in distribution and
$\eta^2(B+\eta I)^{-2}\preceq\eta(B+\eta I)^{-1}$,
\begin{equation}\label{eq:residual-final}
 \frac{\E\norm{\eta T_Bh_Q}^2}{\norm h^2}
 =\E\tau(\eta^2T_B^2)\le\eta\E\tau(T_B)\le2\eta.
\end{equation}
Minkowski in~\eqref{eq:endpoint-residual} therefore proves the uniform infinitesimal estimate
\begin{equation}\label{eq:local-kappa}
 \lambda_n(\eta)\le\sqrt{2\eta}
       +C n^{-1/4}\ell^6\eta^{-10}.
\end{equation}
All the probabilistic estimates have been uniform for $\eta\ge n^{-1/100}$.

\subsection{Local-to-global and the choice of regularization}

We give the local-to-global step explicitly. This is the compact-group form of Oliveira's transportation argument~\cite{Oliveira}.
\begin{lemma}[Infinitesimal contraction implies global contraction]\label{lem:local-global-expanded}
Fix $n$ and a block length $m$. Suppose that, uniformly over $\|h\|=1$, there are couplings from $I$ and $e^{\epsilon h}$ with
\[
 \E d(X_\epsilon,Y_\epsilon)^2\le\epsilon^2\lambda^2+o(\epsilon^2).
\]
Then for all probability measures $\mu,\nu$ on $\SO(n)$,
\[
 W_2(\mu P_n^m,\nu P_n^m)\le\lambda W_2(\mu,\nu).
\]
\end{lemma}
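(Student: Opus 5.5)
We would follow Oliveira's path-coupling scheme in three steps: translate the local hypothesis to arbitrary nearby pairs, glue couplings along a geodesic, and pass from point masses to general $\mu,\nu$.

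\emph{Step 1: nearby pairs.} The Kac kernel commutes with right translation. If $(X,Y)$ is a coupling of the block laws from $I$ and $e^{\epsilon h}$, then $(Xg,Yg)$ couples the block laws from $g$ and $e^{\epsilon h}g$. Bi-invariance of $d$ preserves the cost. Every pair at distance $\delta$ can be written as $(g,e^{\delta h}g)$ with $\|h\|=1$, since geodesics through $g$ are right translates of one-parameter subgroups. The hypothesis holds uniformly over unit directions, so we obtain a modulus $\omega(\delta)\to0$ with
\[
 W_2(P_n^m(x,\cdot),P_n^m(y,\cdot))^2\le d(x,y)^2\bigl(\lambda^2+\omega(d(x,y))\bigr).
\]
This bound is uniform in $x,y$ once $d(x,y)\le\delta$.

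\emph{Step 2: arbitrary pairs.} For arbitrary $x,y$, choose a minimizing geodesic and subdivide it into $k$ equal pieces $x=x_0,\dots,x_k=y$ of length $d(x,y)/k$. Take optimal $W_2$-couplings between consecutive kernel laws. These exist because the space is compact and the cost is continuous. Glue them along the chain with the gluing lemma. The triangle inequality for $d$ and Minkowski in $L^2$ then give
\[
 W_2(P^m(x,\cdot),P^m(y,\cdot))\le\sum_i W_2(P^m(x_i,\cdot),P^m(x_{i+1},\cdot))\le d(x,y)\sqrt{\lambda^2+\omega(d/k)}.
\]
Let $k\to\infty$ to conclude $W_2(P^m(x,\cdot),P^m(y,\cdot))\le\lambda d(x,y)$.

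\emph{Step 3: general measures.} Take an optimal coupling $\gamma$ of $\mu$ and $\nu$. We need a measurable selection $(x,y)\mapsto\Pi_{x,y}$ of couplings of $P^m(x,\cdot)$ and $P^m(y,\cdot)$ with cost at most $\lambda^2d(x,y)^2+\varepsilon$. The set-valued map sending $(x,y)$ to these near-optimal couplings is weakly closed-valued, and the kernel is weakly continuous in its starting point. The Kuratowski--Ryll-Nardzewski theorem, or Villani's measurable-selection corollary for optimal plans, therefore applies; with exactly optimal couplings no $\varepsilon$ is even needed. Integrating $\Pi_{x,y}$ against $\gamma$ gives a coupling of $\mu P^m$ and $\nu P^m$ with squared cost at most $\lambda^2W_2(\mu,\nu)^2$.

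The main obstacle is the uniformity in Step 1. The $o(\epsilon^2)$ in the hypothesis must be uniform in $h$ for the subdivision limit to work. In the application this is exactly the uniform remainder asserted at the end of the coupling construction, which uses compactness of the angle tori and of the unit tangent sphere. If that uniformity were unavailable, the fallback would be lower semicontinuity of $W_2$ in the starting points, but uniformity is the clean route.
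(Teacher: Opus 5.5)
Your proof is correct. Steps 1 and 2 are the paper's argument: right equivariance and bi-invariance transfer the local hypothesis to nearby pairs, and subdividing a minimizing geodesic with the $W_2$ triangle inequality handles arbitrary pairs.

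The genuine difference is Step 3. You pass to general $\mu,\nu$ by integrating a measurably selected family of optimal couplings of $P_n^m(x,\cdot)$ and $P_n^m(y,\cdot)$ against an optimal plan for $(\mu,\nu)$, citing Villani's selection corollary. The paper deliberately avoids any selection theorem. It first treats finitely supported $\mu,\nu$, where only finitely many endpoint couplings are mixed. It then approximates general laws by pushforwards under nearest-point maps onto finite $\varepsilon$-nets. The propagated error is controlled by the synchronous bound $W_2(P_n^m(x,\cdot),P_n^m(y,\cdot))\le d(x,y)$, whose coupling $(Fx,Fy)$ is explicit and therefore automatically measurable.

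Your route is shorter and produces the coupling of $\mu P_n^m$ and $\nu P_n^m$ directly as a mixture. The paper's route is elementary and self-contained. If you rely on Kuratowski--Ryll-Nardzewski instead of Villani's corollary, you need weak measurability of the correspondence, which you assert rather than verify. It does hold: the near-optimal correspondence has closed graph, by weak continuity of $x\mapsto P_n^m(x,\cdot)$ and continuity of $d^2$ on the compact group, and is compact-valued.

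Finally, the uniformity in $h$ that you single out as the main obstacle, and which the paper also invokes, is not actually needed. Along a minimizing geodesic $t\mapsto e^{th}x$, every subdivision piece $(x_i,x_{i+1})$ is a right translate of the single pair $(I,e^{(d/k)h})$. So the $o(\epsilon^2)$ remainder for that one fixed direction already drives the limit $k\to\infty$.
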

\begin{proof}
The kernel is right equivariant: a walk from $x$ has endpoint $Fx$ when the corresponding walk from $I$ has endpoint $F$. Right multiplication is an isometry for $d$, so the local estimate transfers to every starting point. For any $\delta>0$, uniformity in $h$ gives a sufficiently small distance at which the local coefficient is at most $\lambda+\delta$. Subdivide a minimizing geodesic between arbitrary $x,y$ into pieces shorter than this distance. The Wasserstein triangle inequality bounds the distance between their endpoint laws by $(\lambda+\delta)$ times the total geodesic length. Let $\delta\downarrow0$.

For finitely supported initial laws, choose an optimal coupling of the finitely many starting atoms. For every pair use an optimal endpoint coupling, and mix these finitely many couplings. Its squared cost is at most $\lambda^2$ times the original squared cost. For general laws choose finite $\varepsilon$-nets and measurable nearest-net maps. Their pushforward measures approach the originals in $W_2$. Synchronous increments give
\[
 W_2(P_n^m(x,\cdot),P_n^m(y,\cdot))\le d(x,y),
\]
so the propagated finite-net approximations approach the propagated original laws as well. Passing to the limit proves the assertion without a measurable-selection problem for a continuum of endpoint couplings.
\end{proof}

For each fixed $n,\eta$, the uniform Taylor statement already verified above applies to~\eqref{eq:local-kappa}. Consequently
\begin{equation}\label{eq:global-kappa-expanded}
 \kappa_n\le\lambda_n(\eta)
 \le\sqrt{2\eta}+Cn^{-1/4}\ell^6\eta^{-10}.
\end{equation}
The small scale used in geodesic subdivision may depend on $n$ and $\eta$. The proof takes that infinitesimal limit for each fixed $n$ before making the asymptotic parameter choice; no uniform-in-$n$ Taylor neighborhood is assumed.

Choose
\begin{equation}\label{eq:eta-choice}
 \eta=n^{-1/100}.
\end{equation}
Then~\eqref{eq:local-kappa} becomes
\[
 \kappa_n\le\sqrt2\,n^{-1/200}+C\ell^6 n^{-3/20}
 \le2n^{-1/200}\le n^{-1/300}
\]
for sufficiently large $n$. This proves~\eqref{eq:main-kappa}.
The group has diameter at most $\pi\sqrt n$: a rotation has a skew logarithm whose planar angles lie in $[-\pi,\pi]$, with squared Hilbert--Schmidt norm at most $n\pi^2$. Haar stationarity and block contraction imply
\begin{equation}\label{eq:Witerate}
 \sup_xW_2(P_n^{2Nj}(x,\cdot),\pi_n)
 \le\pi\sqrt n\,n^{-j/300}.
\end{equation}
For any fixed $A>0$, taking $j=\lceil300(A+1)\rceil$ gives accuracy at most $n^{-A}$ for all sufficiently large $n$. In particular fixed-accuracy Wasserstein mixing takes $O(N)$ steps.

\section{Total variation in a constant number of cores}\label{sec:tv}
We include the transfer to specify every dependency. Use the same regularization~\eqref{eq:eta-choice}.

\subsection{Few deficient directions and exact scalar means}
For a fresh core covariance $C=D_{2N}$,
\begin{equation}\label{eq:badfraction}
 \delta_n:=\E\tau\one_{\{C<\eta I\}}
 \le 2\eta\E\tau(C+\eta I)^{-1}\le4\eta.
\end{equation}
In~\eqref{eq:badfraction} the notation $C<\eta I$ denotes its spectral projection onto eigenvalues below $\eta$. The first inequality is the pointwise scalar inequality
$\one_{\{x<\eta\}}\le2\eta/(x+\eta)$ for $x\ge0$.

For every bounded real spectral function $f$, covariance equivariance under signed permutations gives
\begin{equation}\label{eq:cov-symmetry}
 \E f(C)=\E\tau(f(C))I.
\end{equation}
Indeed simultaneous conjugation of all coordinate rotations by a signed permutation preserves their input law. Each derivative column is conjugated, up to a sign that vanishes in its rank-one projection. Hence the law of $C$ is invariant. Its expected symmetric spectral function commutes with the signed-permutation group, whose average on symmetric operators is $A\mapsto\tau(A)I$ by~\eqref{eq:symmetry} and spectral decomposition. This proves~\eqref{eq:cov-symmetry} without full Haar invariance.

Concatenate a fixed number $b$ of independent $2N$-step cores. Let $C_j,H_j$ be the right covariance and endpoint of core $j$, and put
\[
 P_j=\one_{\{C_j<\eta I\}},\quad
 F_{>j}=H_b\cdots H_{j+1},\quad
 Q_j=\rho(F_{>j})P_j\rho(F_{>j})^*.
\]
The complete path of core $j$ is independent of all later cores; no independence between $C_j$ and $H_j$ is asserted or needed. Conditional on the full paths of later cores,
\begin{equation}\label{eq:conditional-projection}
 \E[Q_j\mid\text{later cores}]=\delta_n I\preceq\beta_n I,
 \qquad \beta_n=4n^{-1/100}.
\end{equation}
The full covariance is the sum of the transported core covariances and therefore satisfies
\[
 D_{2Nb}\succeq\eta\left(bI-\sum_{j=1}^bQ_j\right).
\]

For completeness, conditional Golden--Thompson gives
\[
 \E\Tr\exp\left(\theta\sum_jQ_j\right)
 \le N[1+\beta_n(e^\theta-1)]^b.
\]
Add the projections in reverse chronological order, so the earlier-added sum is measurable with respect to the later core paths. The identity $e^{\theta Q}=I+(e^\theta-1)Q$ for projections and~\eqref{eq:conditional-projection} justify each conditional step. Taking $\theta=\log(1/(2\beta_n))>0$ yields
\begin{equation}\label{eq:proj-tail}
 \PP\left\{\lambda_{\max}\left(\sum_jQ_j\right)>b/2\right\}
 \le N(2e\beta_n)^{b/2}.
\end{equation}
Choose, for example, the fixed integer $b=10000$. Then the right side is
$N(8e)^{5000}n^{-50}\le N^{-10}$ for sufficiently large $n$. Thus
\begin{equation}\label{eq:whole-cov}
 \PP\{\lambda_{\min}(D_{2Nb})<b\eta/2\}\le N^{-10}.
\end{equation}
The same holds for the left covariance by pointwise conjugacy. No actual separator steps have been inserted between the cores.

\subsection{A cutoff integration-by-parts modulus}
\begin{lemma}\label{lem:tv-modulus}
If the left covariance $B_m$ of an $m$-step block satisfies
$\PP\{\lambda_{\min}(B_m)<2a\}\le\delta$, then
\begin{equation}\label{eq:tv-modulus}
 \norm{P_n^m(x,\cdot)-P_n^m(y,\cdot)}_{\TV}
 \le2\delta+C(m/a)^2d(x,y).
\end{equation}
No density assumption on the full endpoint law is required.
\end{lemma}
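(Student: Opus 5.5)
By right equivariance the walk from $y$ ends at $F y$ when the walk from $I$ ends at $F$. Since $d$ is bi-invariant, it suffices to take $x=I$ and $y=e^{h}$ with $\|h\|=d(x,y)$. If $d(x,y)$ is not small, the right side of~\eqref{eq:tv-modulus} already exceeds one once $C\ge1$, because $m/a\ge1$ in every intended use; I would also check the case $m/a<1$. So assume $d(x,y)\le (a/m)^2$. The approach is to bound $|\E g(F)-\E g(Fe^{h})|$ for a test function $0\le g\le1$, working on the angle torus.

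Write $\E g(Fe^{h})-\E g(F)=\int_0^1\frac{d}{dr}\E g(Fe^{rh})\,dr$. By the triangle inequality along the one-parameter path it is enough to bound the derivative $\frac{d}{dr}\E g(F e^{rh})$ at $r=0$ uniformly, applying it to $g(\cdot\, e^{rh})$.

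To remove the derivative from $g$ I would use the integration by parts on the torus. At the endpoint $F(\theta)$, the left-trivialized differential is $J$ with $JJ^*=2B$. On the event $G=\{\lambda_{\min}(B)\ge a\}$, the field $v(\theta)=\tfrac12 J^*B^{-1}h$ satisfies $Jv=h$. Hence the directional derivative of $g$ in direction $h$ equals $\sum_s v_s\partial_{\theta_s}(g\circ F)$. A hard indicator of $G$ is not differentiable, so I would replace it by a smooth cutoff $\psi(\lambda)$ of a smooth spectral proxy. Concretely, take $\phi=\psi\bigl(\Tr\,\chi(B)\bigr)$, with $\chi$ smooth, equal to $1$ below $a$ and $0$ above $2a$, and $\psi$ smooth, equal to $1$ at $0$ and $0$ at values $\ge1/2$. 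Then $\phi=1$ when $\lambda_{\min}\ge2a$, $\phi=0$ when $\lambda_{\min}<a$, and $1-\phi$ is supported where $\lambda_{\min}(B)<2a$.

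Split $\E[\partial_h g]=\E[\phi\,\partial_hg]+\E[(1-\phi)\,\partial_hg]$. I would not differentiate the second term. Instead, at the level of the total-variation difference, its contribution is at most $2\PP\{\lambda_{\min}<2a\}\le2\delta$. This is done by coupling the bad event directly, rather than differentiating. More precisely, I would write
\[
\E g(Fe^h)-\E g(F)=\E[\phi(\theta)(g(Fe^h)-g(F))]+\E[(1-\phi)(\dots)],
\]
with $\phi$ evaluated at the same angles $\theta$. The second term is then at most $\delta$ per measure, giving $2\delta$ overall. For the first term, integrate $r\in[0,1]$. I would interpret $\phi\, g(Fe^{rh})$ as a function on the torus, using that the covariance $B$ of the word does not depend on the right factor $e^{rh}$. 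Integration by parts, $\int \phi\, v_s\partial_s(\cdot)=-\int \partial_s(\phi v_s)(\cdot)$, bounds the $r$-derivative by $\E\sum_s|\partial_{\theta_s}(\phi v_s)|$, using $|g|\le1$.

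The main obstacle is the quantitative estimate $\E\sum_s|\partial_s(\phi v_s)|\le C(m/a)^2\|h\|$. On the support of $\phi$ we have $\|B^{-1}\|\le a^{-1}$ and $\|w_s\|=1$. The derivative $\partial_s B$ is a sum of at most $m$ terms $\partial_s(w_rw_r^*)$. Each has norm $O(1)$, since $\partial_s w_r=\sqrt2[\,\cdot\,]$ is bounded by $\|L\|\le1$ after conjugation, so $\|\partial_sB\|\le Cm$. Hence $\|\partial_s B^{-1}\|\le Cm/a^2$ and $|\partial_s w_s|$ is $O(m)$ in the worst case. This gives $|\partial_s v_s|\le C(m/a^2+m/a)\|h\|$ after factoring $J^*$. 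The cutoff derivative satisfies $|\partial_s\phi|\le C\,\|\chi'\|\,\|\partial_sB\|_{S_1}/\dots\le Cm/a$, times $|v_s|\le C\|h\|/a$. Summing over $m$ coordinates gives $C m^2/a^2\,\|h\|$, for $a\le1$; I would keep track of $\max(a,a^2)$ in this step. The delicate points are that $\partial_s$ of $\Tr\chi(B)$ is controlled by $\|\chi'\|_\infty\,\|\partial_sB\|_{S_1}\le Cm/a$ without dimension loss, and that $w_s$ does depend on earlier angles, which contributes only the $O(m)$ terms already counted. Integrating in $r$ and taking the supremum over $g$ then yields~\eqref{eq:tv-modulus}. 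No density of the endpoint law is used, since every step is an identity of integrals over the uniform torus.
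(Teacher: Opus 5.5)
Your proposal is correct and follows the paper's proof essentially step for step: the same spectral cutoff $\zeta(\Tr\varphi(B/a))$, the same field $\tfrac12J^*B^{-1}h$ with $JV=h$, periodic integration by parts against the cutoff-weighted submeasure, and $2\delta$ for the discarded mass. One correction: there is no $O(m)$ own-angle term, because $\partial_{\theta_s}w_s=0$ exactly ($w_s$ depends only on $\theta_1,\dots,\theta_{s-1}$), so $|\partial_sv_s|\le Cm\|h\|/a^2$ and the divergence bound $C(m/a)^2\|h\|$ holds for every $a>0$, without your restriction $a\le1$.
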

\begin{proof}
We give the construction and the translation estimate separately.

\paragraph{Derivative of the covariance.}
Fix a plane word and its endpoint $F(\theta)$. For the left columns, differentiating the inverse prefix gives
\begin{equation}\label{eq:TV-columns-expanded}
 \partial_{\theta_s}w_t=-\sqrt2[w_s,w_t]\quad(t>s),
 \qquad \partial_{\theta_s}w_t=0\quad(t\le s).
\end{equation}
The generator $\ad_{\sqrt2w_s}$ is an orthogonal conjugate of a coordinate generator and has operator norm one. Therefore $\|\partial_sw_t\|\le1$ and
\[
 \partial_s B=\sum_{t>s}\bigl((\partial_sw_t)w_t^*+
                                    w_t(\partial_sw_t)^*\bigr).
\]
The nuclear norm of $uv^*$ is $\|u\|\|v\|$. There are at most $m$ summands and $\|w_t\|=1$, whence
\begin{equation}\label{eq:TV-nuclear-expanded}
 \|\partial_sB\|_{S_1}\le2m,
 \qquad \|\partial_sB\|_{\op}\le2m.
\end{equation}

\paragraph{A globally smooth cutoff.}
Choose fixed smooth functions $\varphi,\zeta:\mathbb R\to[0,1]$ with bounded derivatives, such that
\[
 \varphi=1\text{ on }(-\infty,1],\quad\varphi=0\text{ on }[2,\infty),
 \quad
 \zeta=1\text{ on }(-\infty,1/4],\quad\zeta=0\text{ on }[1/2,\infty).
\]
For example, let $h(t)=0$ for $t\le0$ and $h(t)=e^{-1/t}$ for $t>0$, and put
$\omega(t)=h(1-t)/(h(1-t)+h(t))$. Then $\varphi(x)=\omega(x-1)$ and
$\zeta(x)=\omega(4x-1)$ have the required properties.
Set
\begin{equation}\label{eq:TV-cutoff-expanded}
 \chi(\theta)=\zeta\bigl(\Tr\varphi(B(\theta)/a)\bigr).
\end{equation}
If $B\succeq2aI$ then $\chi=1$. If $\lambda_{\min}(B)\le a$, one term in the trace is one, so $\chi=0$. Continuity of $\varphi$ at $1$ shows that the cutoff is zero on a fixed neighborhood of the latter set as well: $\varphi$ remains greater than $1/2$ immediately to the right of $1$. This ensures smooth extension of the inverse field below.

For a smooth scalar function of a symmetric matrix,
\[
 \partial_s\Tr\varphi(B/a)
 =a^{-1}\Tr\bigl(\varphi'(B/a)\partial_sB\bigr).
\]
One sees this first at simple spectrum by differentiating the eigenvalue sum; continuity gives the same formula at repeated eigenvalues. Equivalently, the derivative of the spectral matrix function has divided differences off the diagonal, whose trace vanishes. The function itself is smooth on the symmetric matrices because $\varphi$ is smooth. Pairing operator norm with nuclear norm in~\eqref{eq:TV-nuclear-expanded} gives
\begin{equation}\label{eq:TV-cutoff-derivative-expanded}
 |\partial_s\chi|\le Cm/a.
\end{equation}
No factor $N$ is introduced by the trace.

\paragraph{A vector field representing a group translation.}
For fixed $h\in\g$, define on $B\succ aI$
\[
 V_s=\frac1{\sqrt2}\langle w_s,B^{-1}h\rangle.
\]
Since $Jz=\sqrt2\sum_s z_sw_s$, we have $JV=h$. The field $\chi V$ extends smoothly by zero over the full torus, because the cutoff vanishes in a neighborhood of the singular set. On its support,
\[
 |V_s|\le C\|h\|/a,
 \qquad
 |\partial_sV_s|\le Cm\|h\|/a^2.
\]
The second bound follows from $\partial_sw_s=0$ and
$\partial_sB^{-1}=-B^{-1}(\partial_sB)B^{-1}$. Hence
\begin{equation}\label{eq:TV-divergence-expanded}
 |\diver(\chi V)|
 \le\sum_s\bigl(|\partial_s\chi|\,|V_s|+\chi|\partial_sV_s|\bigr)
 \le C(m/a)^2\|h\|.
\end{equation}

\paragraph{Integration by parts for a submeasure.}
For a smooth function $f$ on the group write
$X_hf(g)=\left.\frac{d}{dt}f(ge^{th})\right|_{t=0}$. The equality $JV=h$ gives
\[
 \sum_s V_s\partial_s(f\circ F)=X_hf(F)
\]
where the cutoff is nonzero. Periodic integration by parts, valid globally for the smooth field $\chi V$, therefore yields
\begin{equation}\label{eq:TV-IBP-expanded}
 \E_\theta[\chi X_hf(F)]
 =-\E_\theta[f(F)\diver(\chi V)].
\end{equation}
Average over the plane word too. Let $\mu=P_n^m(I,\cdot)$ and let $\mu_\chi$ be its endpoint pushforward weighted by $\chi$. Since $0\le\chi\le1$ and $\chi=1$ on $B\succeq2aI$, the positive measure $\mu-\mu_\chi$ has mass at most $\delta$. Equation~\eqref{eq:TV-IBP-expanded} bounds every directional distributional derivative of $\mu_\chi$ by $C(m/a)^2\|h\|$ in variation norm. It makes no assertion that the full law $\mu$ has a density.

To obtain a finite translation, put $f_t(g)=f(ge^{th})$. The two exponentials in the same direction commute, so
$\partial_tf_t(g)=X_hf_t(g)$. Integrating~\eqref{eq:TV-IBP-expanded} from $0$ to $1$, using~\eqref{eq:TV-divergence-expanded}, gives for $\|f\|_\infty\le1$
\[
 \left|\int f(ge^h)\,d\mu_\chi(g)-\int f(g)\,d\mu_\chi(g)\right|
 \le C(m/a)^2\|h\|.
\]
Smooth test functions determine the variation norm of finite Borel measures on a compact smooth manifold: regularity reduces to continuous functions, which admit uniform smooth approximation in finitely many charts and a partition of unity. Restoring the omitted measure and its translate costs at most $2\delta$ in the deliberately loose probability-TV convention used here.

Finally $P_n^m(x,\cdot)$ is the right translate of $\mu$ by $x$. Translate both endpoint laws by $x^{-1}$ and choose a minimizing skew logarithm $h$ of $yx^{-1}$. A planar normal form for a rotation shows that such a logarithm exists and $\|h\|=d(x,y)$ for the bi-invariant metric. The preceding bound proves~\eqref{eq:tv-modulus}.
\end{proof}

Apply the lemma with $m=2Nb$, $a=b\eta/4$, and $\delta=N^{-10}$ from~\eqref{eq:whole-cov}. Since $m/a=8N/\eta$,
\[
 \norm{P_n^{2Nb}(x,\cdot)-P_n^{2Nb}(y,\cdot)}_{\TV}
 \le2N^{-10}+CN^2\eta^{-2}d(x,y).
\]
Integrate against any coupling of initial measures, minimize its first-moment cost, and use $W_1\le W_2$ and Haar stationarity. For every probability $\mu$,
\begin{equation}\label{eq:TVtransfer}
 \norm{\mu P_n^{2Nb}-\pi_n}_{\TV}
 \le2N^{-10}+CN^2\eta^{-2}W_2(\mu,\pi_n).
\end{equation}
By~\eqref{eq:Witerate}, a fixed number $j=3300$ of initial cores gives Wasserstein error at most $n^{-10}$ for sufficiently large $n$. With $\eta=n^{-1/100}$,
\[
 \sup_x\norm{P_n^{2N(j+b)}(x,\cdot)-\pi_n}_{\TV}
 \le2N^{-10}+CN^2 n^{2/100}n^{-10}\le Cn^{-5}.
\]
The total number of steps is $2N(j+b)=O(N)$, completing Theorem~\ref{thm:main}.

\begin{proposition}[Dimensional lower bound]
For $t<N$, the endpoint law from any point is singular with respect to Haar and has total-variation distance one from it.
\end{proposition}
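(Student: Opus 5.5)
The plan is a dimension count. Fix a starting point $x$ and $t<N$. The endpoint law $P_n^t(x,\cdot)$ is a finite mixture over the $N^t$ coordinate-plane words $(e_1,\dots,e_t)$, each with probability $N^{-t}$. For a fixed word, the endpoint is the image of the uniform law on the torus $(\mathbb R/2\pi\mathbb Z)^t$ under the smooth map
\[
 \Psi(\theta_1,\dots,\theta_t)=R_{e_t}(\theta_t)\cdots R_{e_1}(\theta_1)\,x .
\]
It therefore suffices to show that each such pushforward is concentrated on a Haar-null set. The union of these finitely many null sets is then a Borel set $E$ with $\pi_n(E)=0$ and $P_n^t(x,E)=1$. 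This gives singularity, and also
\[
 \norm{P_n^t(x,\cdot)-\pi_n}_{\TV}\ge P_n^t(x,E)-\pi_n(E)=1 .
\]
Since the variation distance of probabilities is at most one, equality holds.

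For a single word, $\Psi$ is a smooth map from a compact $t$-dimensional manifold into the $N$-dimensional manifold $\SO(n)$. When $t<N$, its image has Haar measure zero. One can see this by the easy half of Sard's theorem: a $C^1$ map from a manifold of strictly smaller dimension has image of Lebesgue measure zero in every chart. Alternatively, use a direct covering argument. Because $\Psi$ is Lipschitz on the torus, cover the torus by $O(r^{-t})$ cubes of side $r$. Their images lie in $O(r^{-t})$ balls of radius $O(r)$, each of Haar measure $O(r^N)$. The total measure is $O(r^{N-t})$, which tends to $0$ as $r\to0$. Haar measure is a smooth positive density in charts, so Lebesgue-null sets and Haar-null sets agree. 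The image $\Psi(\text{torus})$ is compact, hence Borel, so $E$ is measurable.

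The only points needing care are measurability and the dimension inequality itself. The rank bound of the left-trivialized differential $J$ by $t$ is consistent with the above, but only smoothness and $\dim\SO(n)=N>t$ are needed. No step is a real obstacle. The one place to be careful is that the $t=0$ case (a point mass) and words with repeated planes are covered by the same argument: the parameter dimension never exceeds $t$, regardless of the rank of $J$.
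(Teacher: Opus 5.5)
Your proof is correct and follows the paper's argument: write the endpoint law as a finite mixture over plane words, observe that each word gives a smooth image of a $t$-dimensional torus in the $N$-dimensional group, which is Haar-null when $t<N$, and take the finite union. Your Lipschitz covering estimate and the explicit bound $\|P_n^t(x,\cdot)-\pi_n\|_{\TV}\ge P_n^t(x,E)-\pi_n(E)=1$ only spell out what the paper states through Hausdorff dimension, and putting $x$ into the map directly replaces the paper's closing remark that translation preserves null sets.
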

\begin{proof}
For a fixed plane word, the endpoint is the smooth image of a $t$-dimensional compact torus in the $N$-dimensional group. Its image has Hausdorff dimension at most $t$ and zero Riemannian volume if $t<N$. There are finitely many plane words. Haar measure is normalized Riemannian volume; a finite union of these images is Haar-null. Translation preserves null sets.
\end{proof}

\section{What the theorem does and does not say about cutoff}\label{sec:cutoff}
The distinction between a mixing-time scale and the shape of the transition is important. A bound of order $n^2$ is not, by itself, a theorem about absence of cutoff.

\begin{definition}[Total-variation cutoff and pre-cutoff]
Write $d_n(t)=\sup_x\|P_n^t(x,\cdot)-\pi_n\|_{\TV}$. A sequence has total-variation cutoff if
\begin{equation}\label{eq:cutoff-definition}
 \lim_{n\to\infty}\frac{t_{\TV}^{(n)}(\epsilon)}
 {t_{\TV}^{(n)}(1-\epsilon)}=1
 \quad\hbox{for every }0<\epsilon<1/2.
\end{equation}
It has pre-cutoff if
\begin{equation}\label{eq:precutoff-definition}
 \sup_{0<\epsilon<1/2}\limsup_{n\to\infty}
 \frac{t_{\TV}^{(n)}(\epsilon)}{t_{\TV}^{(n)}(1-\epsilon)}<\infty.
\end{equation}
These definitions are in terms of the sequence of kernels, not of one fixed dimension; see, for example,~\cite{HermonPeres}.
\end{definition}

\begin{proof}[Proof of Corollary~\ref{cor:precutoff}]
The dimensional lower bound in Section~\ref{sec:tv} gives $d_n(t)=1$ for $t<N$. The construction there uses $j=3300$ Wasserstein cores and $b=10000$ smoothing cores. At the deterministic time
\[
 2N(j+b)=26600N
\]
its TV distance is at most $Cn^{-5}$, which tends to zero. For each fixed $\epsilon\in(0,1)$ this yields
\[
 N\le t_{\TV}^{(n)}(\epsilon)\le26600N
\]
for all sufficiently large $n$. The upper constant is the same for every fixed accuracy; only the sufficiently-large-$n$ threshold can depend on $\epsilon$. The ratio in~\eqref{eq:precutoff-definition} consequently has limit superior at most $26600$, uniformly in $\epsilon$. This proves pre-cutoff.

The argument supplies neither a limit for that ratio nor two accuracies whose ratio is bounded away from one. Thus it supplies neither a proof of cutoff nor a proof of its absence.
\end{proof}

Equivalently, the estimates bound the mixing region between two constant multiples of $N$, but they do not determine the width of that region on the $N$ scale. A cutoff at some $t_n\asymp N$ with window $o(N)$ is compatible with all the estimates. So is a non-sharp transition inside a constant-multiple range. A cutoff at a scale $t_n$ with $t_n/N\to\infty$, such as $n^2\log n$, would be incompatible with the quadratic upper bound, because the distance is already tending to zero by $26600N=o(t_n)$. Ruling out that larger scale is not the same statement as ruling out cutoff.

There is also no obstruction from confusing the relaxation scale with $N$. In the present discrete-time normalization,
\[
 t_{\mathrm{rel}}^{(n)}=\gamma_n^{-1}
 =\frac{2n(n-1)}{n+2}\asymp n,
 \qquad
 \frac{t_{\mathrm{rel}}^{(n)}}{N}=\frac4{n+2}\longrightarrow0.
\]
Thus the relaxation time is smaller than the quadratic mixing scale. We do not use any converse cutoff criterion based on this comparison.

\subsection{Two elementary examples explaining the distinction}
The sphere cutoff theorem from a coordinate start in the preprint of Jain--Mizgerd~\cite{JM26}, and the uniform-plane matrix cutoff theorem of Hough--Jiang~\cite{HJ17}, concern different observations or different kernels. Neither settles the full coordinate-plane matrix cutoff question considered here.

The following examples are not models for the detailed behavior of Kac's walk. They demonstrate directly that the same quadratic order of a mixing time can coexist with either behavior in~\eqref{eq:cutoff-definition}.

\begin{example}[Quadratic mixing without cutoff]\label{ex:no-cutoff}
On an $n$-point space with uniform law $\pi$, let
\[
 K_n=(1-n^{-2})I+n^{-2}\Pi,
\]
where $\Pi$ replaces the current state by an independent uniform state. Since $\Pi^2=\Pi$,
\[
 K_n^t=(1-n^{-2})^tI+\bigl(1-(1-n^{-2})^t\bigr)\Pi.
\]
Its worst-start distance is exactly
\[
 d_n(t)=(1-1/n)(1-n^{-2})^t.
\]
Therefore, for each fixed $0<\epsilon<1$,
\[
 t_n(\epsilon)\sim n^2\log(1/\epsilon).
\]
For $0<\epsilon<1/2$, the limiting ratio is
$\log(1/\epsilon)/\log(1/(1-\epsilon))>1$, not one. The mixing time is quadratic at every fixed accuracy, but there is no cutoff.
\end{example}

\begin{example}[Quadratic mixing with cutoff]\label{ex:with-cutoff}
On $\{-1,1\}^n$, independently for every coordinate and every time step, replace that coordinate by a fresh fair sign with probability
\[
 p_n=\frac{\log n}{2n^2}.
\]
Otherwise leave it unchanged. The stationary law is the product of fair signs. Symmetry makes every starting state equivalent; take the all-positive state. At time $t$, the signs are independent with common mean
\[
 m_t=(1-p_n)^t.
\]
The density relative to stationarity is $\prod_i(1+m_tx_i)$, so its chi-squared divergence is
\[
 \chi^2_t=(1+m_t^2)^n-1.
\]
At $t=\lceil(1+\delta)n^2\rceil$, for fixed $\delta>0$, we have
$nm_t^2=n^{-\delta+o(1)}\to0$. Cauchy--Schwarz gives
$d_n(t)\le\tfrac12\sqrt{\chi^2_t}\to0$.

For $t=\lfloor(1-\delta)n^2\rfloor$ with $0<\delta<1$, let $S_t$ be the sum of the signs. Under the time-$t$ law its mean is $nm_t$ and variance at most $n$; at stationarity the mean is zero and variance $n$. Here $nm_t^2=n^{\delta+o(1)}\to\infty$. The event $S_t\ge nm_t/2$ has probability tending to one under the time-$t$ law and to zero at stationarity, by Chebyshev's inequality. Thus $d_n(t)\to1$.

For every fixed $\delta>0$, the distance is close to one before $(1-\delta)n^2$ and close to zero after $(1+\delta)n^2$. Monotonicity then implies~\eqref{eq:cutoff-definition}, with cutoff time $n^2$. Again the mixing time is quadratic, but now cutoff occurs.
\end{example}

\section{Quantitative scope and reproducibility}\label{sec:scope-final}
The proof is asymptotic. Its exponents and fixed numbers of cores were chosen to leave ample room in each estimate, not to produce a practical mixing prescription at a specified moderate dimension. The sufficiently-large-$n$ threshold and the absolute constants in the resolvent estimates are not numerically evaluated.

All transport cores use exactly $2N$ genuine coordinate-plane updates. The suffix length $R=\lceil n^{5/4}\rceil$ appears only in proofs of expectation estimates. Likewise, the inactive rotations in the relative-Haar colored process are an auxiliary comparison device, not uncounted updates in the coupling. The TV stage consists of $b$ additional cores, each of exactly $2N$ steps, with no intervening free scrambling.

The proof does not assume that the entire finite-time endpoint law has a density. At every finite time there is, for example, a positive-probability event that all selected planes coincide, leaving mass on a one-dimensional subgroup. The regularized-inverse coupling is defined even on that event. The TV smoothing argument separately discards an explicitly bounded exceptional submeasure before using an unregularized inverse and restores its mass at the end.

The Hilbert--Schmidt chord distance is at most $d$: integrate the ambient velocity along any group path and minimize its Riemannian length. Wasserstein distance decreases when its ground metric decreases, and $W_1\le W_2$. These facts prove the metric variants in Theorem~\ref{thm:main}. The proof does not identify a limiting TV profile, a sharp constant multiplying $n^2$, or a cutoff window.

\subsection*{Classical inputs and what is proved here}
The exact group spectral-gap theorem~\eqref{eq:classical-gap} is the cited theorem of Carlen--Carvalho--Loss; its full proof is not reproduced. The Golden--Thompson inequality is also used as a classical finite-dimensional result. Their normalization and all their uses are stated explicitly. The representation averaging consequence, Gaussian contraction estimates, characteristic stability inequality, matrix-noise accumulation, rational-function approximation, conditional covariance amplification, and the transport-to-TV argument are proved in this paper. No earlier manuscript from this project, computational output, or random-matrix limit theorem is an additional hypothesis.

\subsection*{AI-assistance disclosure}
Substantial assistance from the generative AI system designated GPT-6 Astra was used in exploring the proof strategy, drafting mathematical arguments and exposition, and preparing algebra-check programs. The mathematical justification is the argument in this manuscript, not the output of those programs. Responsibility for the submitted content rests with the human author. The separate review materials record the internal checks and their limitations.

\end{document}